\documentclass[11pt,reqno]{amsart}
\usepackage[utf8]{inputenc}
\usepackage{stmaryrd}
\usepackage{bbm,tikz}
\usepackage{mathrsfs}
\usepackage{enumerate}
\usepackage{latexsym,amsxtra}
\usepackage{dsfont}
\usepackage{slashed}
\usepackage[all]{xy}
\usepackage{amscd,graphics}
\usepackage{amsmath,amsfonts,amsthm,amssymb}
\usepackage{latexsym,amsmath}
\usepackage{graphicx,psfrag}
\usepackage{mathabx}
\usepackage{hyperref}
\usepackage{fancyhdr}
\usepackage{slashed}
\usepackage{pinlabel}

\textwidth = 5.40 in
\textheight = 8 in
\hoffset =-.25in

\newcommand{\R}{\mathcal R}
\newcommand{\p}{\partial}
\newcommand{\M}{\mathcal M}
\newcommand{\A}{\mathcal A}
\newcommand{\G}{\mathcal G}
\newcommand{\B}{\mathcal B}
\newcommand{\Z}{\mathbb Z}
\newcommand{\Q}{\mathbb Q}

\newcommand{\CC}{\mathcal C}
\newcommand{\BL}{\mathcal BL}
\renewcommand{\H}{\mathcal H}
\renewcommand{\S}{\mathcal S}
\newcommand{\D}{\mathcal D}

\newcommand{\s}{\mathfrak{s}}
\newcommand{\sign}{\operatorname{sign}}
\newcommand{\tr}{\operatorname{tr}}
\newcommand{\scal}{\operatorname{scal}}

\newcommand{\SF}{\operatorname{SF}}
\newcommand{\ind}{\operatorname{ind}}
\newcommand{\Int}{\operatorname{Int}}
\newcommand{\Lef}{\operatorname{Lef}\,}
\newcommand{\hmred}{HM^{\red}}

\newcommand{\hfhat}{\widehat{\operatorname{HF}}}

\renewcommand{\phi}{\varphi}
\newcommand{\id}{\operatorname{id}}

\newcommand{\su}{\mathfrak{su}}
\renewcommand{\sf}{\operatorname{sf}}
\newcommand{\im}{\operatorname{im}}
\newcommand{\ad}{\operatorname{ad}}
\newcommand{\Ad}{\operatorname{Ad}}
\newcommand{\Hom}{\operatorname{Hom}}
\newcommand{\Fix}{\operatorname{Fix}}
\newcommand{\coker}{\operatorname{coker}}
\newcommand{\spinc}{{\rm spin}^c}
\newcommand{\ab}{\operatorname{ab}}
\newcommand{\red}{\operatorname{red}}
\newcommand{\irr}{\operatorname{irr}}
\renewcommand{\sf}{\operatorname{sf}}
\newcommand{\gr}{\operatorname{gr}}
\newcommand{\lsw}{\lambda_{\,\rm{SW}}}
\newcommand{\lfo}{\lambda_{\,\rm{FO}}}
\newcommand{\etasig}{\eta_{\sign}}
\newcommand{\etasiga}{\eta_{\sign,\alpha}}

\newtheorem{thm}{Theorem}[section]

\newtheorem{lem}[thm]{Lemma}
\newtheorem{cor}[thm]{Corollary}
\newtheorem{pro}[thm]{Proposition}

\newtheorem{introthm}{Theorem}
\newtheorem{introcor}[introthm]{Corollary}
\newtheorem{introconj}[introthm]{Conjecture}

\theoremstyle{definition}
\newtheorem{defi}[thm]{Definition}

\newtheorem{rmk}[thm]{Remark}
\pagestyle{fancy}
\fancyhf{}
\cfoot{\thepage}

\newcommand{\largecup}{\mbox{\Large $\cup$}}
\newcommand{\largecap}{\mbox{\Large $\cap$}}

\title{On the monopole Lefschetz number of finite order diffeomorphisms}
\thanks{The first author was partially supported by the NSF Grant DMS-1707857, the second author was partially supported by NSF Grant DMS-1811111 and a Simons Fellowship, and the third author was partially supported by Collaboration Grant  \#\,426269 from the Simons Foundation}
\author[Jianfeng Lin]{Jianfeng Lin}
\address{Department of Mathematics\newline\indent University of California, San Diego \newline\indent La Jolla, CA 92093}
\email{\rm{jil063@ucsd.edu}}
\author[Daniel Ruberman]{Daniel Ruberman}
\address{Department of Mathematics, MS 050\newline\indent Brandeis
University \newline\indent Waltham, MA 02454}
\email{\rm{ruberman@brandeis.edu}}
\author[Nikolai Saveliev]{Nikolai Saveliev}
\address{Department of Mathematics\newline\indent
University of Miami \newline\indent PO Box 249085
\newline\indent Coral Gables, FL 33124}
\email{\rm{saveliev@math.miami.edu}}
\subjclass[2010]{57R57, 57R58, 57K10, 57K31, 57K41}

\begin{document}
\begin{abstract}
Let $K$ be a knot in an integral homology 3-sphere $Y$, and $\Sigma$ the corresponding $n$-fold cyclic branched cover. Assuming that $\Sigma$ is a rational homology sphere (which is always the case when $n$ is a prime power), we give a formula for the Lefschetz number of the action that the covering translation induces on the reduced monopole homology of $\Sigma$. The proof relies on a careful analysis of the Seiberg--Witten equations on 3-orbifolds and of various $\eta$-invariants. We give several applications of our formula: (1) we calculate the Seiberg--Witten and Furuta--Ohta invariants for the mapping tori of all semi-free actions of $\Z/n$ on integral homology 3-spheres; (2) we give a novel obstruction (in terms of the Jones polynomial) for the branched cover of a knot in $S^3$ being an $L$-space; (3) we give a new set of knot concordance invariants in terms of the monopole Lefschetz numbers of covering translations on the branched covers. 
\end{abstract}
\maketitle

\section{Introduction}
Monopole Floer homology, as defined by Kronheimer and Mrowka \cite{kronheimer-mrowka:monopole} and Fr\o yshov \cite{froyshov:monopole}, is a powerful invariant of 3-manifolds. Orientation preserving self-diffeomorphisms of the 3-manifold act on the monopole Floer homology. We initiated a study of this action in \cite{LRS2}, where we calculated the monopole Lefschetz number of an involution on a rational homology 3-sphere that makes it into a double branched cover over a link in the 3-sphere. In this paper we continue this study and extend our calculations to all finite order diffeomorphisms making a rational homology 3-sphere into a branched cover of a knot in an integral homology 3-sphere. Note that the case of free diffeomorphisms of finite order was dealt with in \cite{RS:jktr}.

Given a knot $K$ in an integral homology 3-sphere $Y$ and an integer $n\ge 2$, consider the $n$--fold cyclic branched cover $\Sigma \to Y$ with branch set $K$ and denote by $\tau: \Sigma \to \Sigma$ the covering translation. Assume that $\Sigma$ is a rational homology 3--sphere (which is always true when $n$ is a prime power). Observe that $\Sigma$ admits a unique $\spinc$ structure $\s$ such that $\tau_*(\s) = \s$, and that this $\spinc$ structure is actually a spin structure; see Remark \ref{R:unique}. Our main theorem can now be stated as follows.

\begin{introthm}\label{T:A}
Let $\tau_*: HM^{\red} (\Sigma,\s) \to HM^{\red} (\Sigma,\s)$ be the map in the reduced monopole Floer homology induced by the covering translation $\tau$. Then 
\begin{equation}\label{E:lef}
\Lef(\tau_*)\, =\, n\cdot \lambda (Y)\, +\, \frac 1 8\cdot \sum_{m=1}^{n-1}\;\sign_{m/n}(K)\, -\, h(\Sigma,\s),
\end{equation}
where $\lambda(Y)$ is the Casson invariant of $Y$, $\sign_{m/n} (K)$ are the Tristram--Levine signatures of $K$, and $h(\Sigma,\s)$ is the Fr{\o}yshov invariant.
\end{introthm}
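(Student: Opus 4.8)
\emph{Proof strategy.} The plan is to interpret $\Lef(\tau_*)$ as a Seiberg--Witten invariant of the mapping torus of $\tau$ and then to compute that invariant by index theory on orbifolds, so that the three terms on the right of \eqref{E:lef} arise respectively from a non-equivariant Seiberg--Witten count downstairs on $Y$, from $\eta$--invariants supported near the branch locus, and from the reducible solution. First I would pass to the $4$--manifold $X=X_\tau$, the mapping torus of $\tau\colon\Sigma\to\Sigma$. Since $\tau$ has order $n$ and $\Sigma$ is a $\Q$--homology sphere, the Wang exact sequence shows that $X$ is a $\Q$--homology $S^1\times S^3$; moreover $\Sigma\times S^1\to X$ is an $n$--fold cyclic cover, and the $\tau$--invariant spin structure $\s$ of Remark~\ref{R:unique} produces a reducible Seiberg--Witten solution on $X$. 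A Lefschetz fixed point argument in monopole Floer homology --- tracking the action of $\tau_*$ on $\hmred(\Sigma,\s)$ together with its action on the $U$--tower attached to the unique reducible of $(\Sigma,\s)$ --- should yield, up to sign conventions,
\begin{equation*}
\Lef(\tau_*)\ =\ -\,\lsw(X)\ -\ h(\Sigma,\s),
\end{equation*}
where $\lsw(X)$ is the metric--independent Seiberg--Witten invariant of the $\Q$--homology $S^1\times S^3$ $X$ in the sense of Mrowka--Ruberman--Saveliev (a signed count of irreducible solutions corrected by an index--theoretic term), and the $h(\Sigma,\s)$ term records the discrepancy between $\hmred$ and the full Floer group on which the trace of $\tau_*$ is naturally computed. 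As a consistency check, for $n=1$ (trivial $\tau$, unknotted $K$) this should reduce to the known relation $\chi(\hmred(Y,\s))=\lambda(Y)-h(Y,\s)$ between the Casson and Fr\o yshov invariants.

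Next I would compute $\lsw(X)$. The key point is that $X$ is the $n$--fold cyclic branched cover of $Y\times S^1$ along the torus $K\times S^1$ --- classified by the homomorphism $H_1\big((Y\setminus K)\times S^1\big)\to\Z/n$ sending the meridian of $K$ to $1$ and the $S^1$ factor to $-1$ --- or, equivalently, $X$ is assembled from the $3$--orbifold $\mathcal Y=\Sigma/\tau$, i.e. $Y$ with cone angle $2\pi/n$ along $K$, together with its branched cover. I would establish a branched--cover formula of the shape
\begin{equation*}
\lsw(X)\ =\ n\cdot\lsw(Y\times S^1)\ +\ \delta
\end{equation*}
by decomposing the irreducible Seiberg--Witten solutions on $X$ according to the $n$ characters of the deck group $\Z/n$, comparing each character sector with solutions on $Y\times S^1$ twisted by the corresponding flat connection so that the leading contributions assemble into $n$ copies of $\lsw(Y\times S^1)$, and letting $\delta$ collect the resulting differences of index--theoretic correction terms across sectors. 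Inserting $\lsw(Y\times S^1)=-\lambda(Y)$ (Mrowka--Ruberman--Saveliev, Lim) produces the leading term $n\cdot\lambda(Y)$.

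Finally I would evaluate the defect $\delta$ by index theory on orbifolds. Choosing a $4$--manifold $W$ with $\partial W=Y$ and a surface $F\subset W$ with $\partial F=K$, equip $W$ with the $\Z/n$ cone structure along $F$ to obtain a $4$--orbifold $\mathcal V$ with $\partial\mathcal V=\mathcal Y$, and let $V\to\mathcal V$ be the associated $n$--fold branched cover, so that $\partial V=\Sigma$. Comparing the Dirac and signature operators on $V$ and on $\mathcal V$ through the Atiyah--Patodi--Singer index theorem and the $G$--signature theorem localized near $F$, the correction $\delta$ is expressed in terms of the $\eta$--invariants $\etasig$, $\etasiga$ of the signature operators on the boundary twisted by the flat $\Z/n$--characters along $K$; these $\eta$--invariants are exactly the Tristram--Levine signatures $\sign_{m/n}(K)$, and the trivial character ($m=0$) contributes $\sign_0(K)=0$. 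Assembling the $n-1$ nontrivial characters yields $\delta=-\tfrac18\sum_{m=1}^{n-1}\sign_{m/n}(K)$, which together with the previous two steps gives \eqref{E:lef}.

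The main obstacle is the orbifold analysis underlying these last two steps: setting up the Seiberg--Witten moduli spaces on the $3$--orbifold $\mathcal Y$ and on $X$ carefully enough to run the character decomposition, proving the branched--cover formula for $\lsw$ in this non--free situation (the free case being \cite{RS:jktr}), and carrying out the $\eta$--invariant and $G$--signature bookkeeping --- including the wall-crossing and metric-dependence subtleties forced by $b^+(X)=0$ --- so that the index corrections collapse to precisely the Tristram--Levine signature sum. By comparison the Lefschetz step is essentially formal once the action of $\tau_*$ on the reducible tower is pinned down.
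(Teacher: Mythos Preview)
Your two-step architecture is exactly the paper's: first reduce $\Lef(\tau_*)$ to $-\lsw(X)-h(\Sigma,\s)$ via the splitting theorem of \cite{LRS1}, then compute $\lsw(X)$. The difference lies entirely in how $\lsw(X)$ is computed.

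You organize the computation around the branched $\Z/n$--cover $X\to Y\times S^1$ and a character decomposition of the Seiberg--Witten solutions, aiming for $\lsw(X)=n\cdot\lsw(Y\times S^1)+\delta$ with $\delta$ coming from signature $\eta$--invariants on a bounding pair $(W,F)$. The paper instead exploits the orbifold circle bundle $\pi:X\to Y^o$ (where $Y^o$ is $Y$ with cone angle $2\pi/n$ along $K$) and Baldridge's theorem to write $\#\M^{\irr}(X)=\sum_{k=0}^{n-1}\#\M^{\irr}(Y^o,\s_k)$, indexed by the $n$ orbifold $\spinc$ structures. Each orbifold count is then compared to the count on $Y$ by a cylindrical-end gluing argument in the spirit of Herald's pillowcase picture, and the discrepancy is a spectral flow of Dirac operators on the knot exterior $N^*$. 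The correction term $w(X,g,\beta)$ is handled by a purely three-dimensional analysis of $\eta$--invariants: the Dirac $\eta$ via $0$--surgery and the Mazzeo--Melrose splitting formula, the signature $\eta$ via $\rho$--invariants and Gilmer's identification with Tristram--Levine signatures.

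One point in your outline deserves care. You expect the monopole count on $X$ to assemble cleanly into $n$ copies of the count on $Y\times S^1$, with $\delta$ absorbing only the index-theoretic corrections and expressible through signature $\eta$--invariants alone. In the paper's computation this is \emph{not} what happens: the monopole count differs from $n\cdot\#\M^{\irr}(Y)$ by the spectral flow terms, and the correction term $w(X,g,\beta)$ contains a Dirac $\eta$--invariant that produces the \emph{same} spectral flow with opposite sign. Only after this cancellation does one arrive at the signature sum $\tfrac18\sum\sign_{m/n}(K)$. If you pursue your $(W,F)$ route you will still need to track the Dirac contribution, not just the $G$--signature; the paper's $0$--surgery trick (passing to the unbranched cover $\Sigma(0)\to Y(0)$, where a genuine character decomposition applies) is precisely what makes the Dirac side tractable. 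What your four-dimensional viewpoint would buy, if carried through, is a more conceptual packaging of the signature defect; what the paper's three-dimensional approach buys is direct control of the Dirac piece and of the spectral-flow cancellation.
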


The formula \eqref{E:lef} was first conjectured in \cite{LRS2} but its proof here will use methods completely different from those in \cite{LRS2}: we will first calculate the Seiberg--Witten invariant $\lsw(X)$ for the mapping torus $X$ of $\tau$ using the original definition of $\lsw(X)$ from \cite{MRS}, and then apply the splitting theorem of \cite{LRS1} to derive \eqref{E:lef}. 

\begin{introthm}\label{T:B}
Let $\Sigma$ be a rational homology sphere as above and $X$ the mapping torus of the covering translation $\tau: \Sigma \to \Sigma$ with the standard orientation and homology orientation. Then 
\begin{equation}\label{E:lsw}
-\lsw (X)\, =\, n\cdot \lambda (Y)\, +\, \frac 1 8\cdot \sum_{m=1}^{n-1}\;\sign_{m/n}(K).
\end{equation}
\end{introthm}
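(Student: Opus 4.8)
The plan is to evaluate $\lsw(X)$ straight from the definition in \cite{MRS}, as a signed count of irreducible monopoles minus an index--theoretic correction, using the orbifold behind the branched cover to make both terms computable. First I would fix the topology: the mapping torus $X$ of $\tau$ is the $n$--fold cyclic branched cover of $Y\times S^1$ with branch set $K\times S^1$, equivalently an $n$--fold orbifold cover of the $4$--orbifold $\mathcal O$ obtained from $Y\times S^1$ by declaring $\Z/n$ normal isotropy along $K\times S^1$. A Wang sequence argument shows that $X$ is a $\Q$--homology $S^1\times S^3$ whose infinite cyclic cover $\widetilde X=\Sigma\times\mathbb R$ is a $\Q$--homology $S^3$ --- the setting in which $\lsw$ is defined --- and the $\tau$--invariant spin structure $\s$ on $\Sigma$ furnishes the unique torsion $\spinc$ structure on $X$ used there. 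Recall that in \cite{MRS} one fixes a metric $g$ and a small generic perturbation $\beta$ with no reducible solutions, and sets $\lsw(X)=\#\M_{\irr}(X,\s,g,\beta)-w(X,\s,g,\beta)$, where $w$ is built from the end--periodic Dirac operator on $\widetilde X$ together with an $\eta$--type term and is engineered so that the combination does not depend on $(g,\beta)$.

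Next I would choose $(g,\beta)$ adapted to the orbifold. Take a metric on $Y$ that is conical of cone angle $2\pi/n$ near $K$, multiply by the standard $S^1$, and pull the resulting orbifold metric on $\mathcal O$ back to $X$; this makes $X$ a Riemannian mapping torus over $S^1$ with fibre a fixed Riemannian copy $(\Sigma,g_\Sigma)$ and monodromy the isometry $\tau$, so that $\widetilde X$ is a genuine cylinder. A monopole on $X$ then lifts to a finite--energy solution on $\widetilde X$ equivariant under the deck group $\Z$, hence translation--invariant, because the Chern--Simons--Dirac functional on a $\Q$--homology sphere has connected gauge group, is $\mathbb R$--valued, and so admits no nonconstant periodic gradient trajectory. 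Consequently the irreducible monopoles on $X$ are exactly the $\tau$--invariant irreducible critical points on $(\Sigma,\s)$, i.e.\ the irreducible orbifold Seiberg--Witten solutions on the $3$--orbifold $\widehat Y=(Y,K,n)$ --- with care needed to lift the $\Z/n$--action on the moduli space to configurations. Two standard but necessary points to dispatch: the absence of reducibles must be arranged by a generic $\tau$--invariant perturbing $2$--form (equivalently an orbifold $2$--form on $\mathcal O$), and transversality of the irreducibles has to be achieved within the equivariant perturbations, or else the two counts compared through a one--parameter family.

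It then remains to extract \eqref{E:lsw}. Comparing the orbifold computation on $\mathcal O$ with $n$ copies of the computation on $Y\times S^1$ --- for which $\lsw(Y\times S^1)=-\lambda(Y)$ by \cite{MRS} --- the two differ only by a term supported near the branch locus $K\times S^1$, which I would evaluate by the $\Z/n$--equivariant Atiyah--Patodi--Singer index theorem: the end--periodic index of the twisted Dirac operator, together with the corresponding $\eta$--invariants of the odd signature operator on $Y$, assembles into the Atiyah--Patodi--Singer $\rho$--invariants of the pair $(Y,K)$, and these combine precisely to $\tfrac{1}{8}\sum_{m=1}^{n-1}\sign_{m/n}(K)$. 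Collecting the two contributions gives $-\lsw(X)=n\cdot\lambda(Y)+\tfrac{1}{8}\sum_{m=1}^{n-1}\sign_{m/n}(K)$, the standard orientation and homology orientation of $X$ pinning down the overall sign.

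The main obstacle, where essentially all the analytic work lies, is the $\eta$--invariant bookkeeping along the singular stratum: one must set up the end--periodic Dirac and signature operators for the orbifold--metric mapping torus with the cone singularity along $K\times S^1$ present, prove the index theorem that isolates their contributions, and identify the resulting spectral asymmetry with the Tristram--Levine signatures $\sign_{m/n}(K)$ --- throughout relying on the $(g,\beta)$--independence of $\lsw$ to legitimize the convenient but singular metric. Keeping the reducibles out and controlling the signs and orientations of the monopole count, so that $n\cdot\lambda(Y)$ emerges with the right coefficient, is the remaining delicate ingredient.
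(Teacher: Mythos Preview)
Your outline is close in spirit to the paper's proof, and you have correctly identified the two main pieces: a monopole count that should produce $n\cdot\lambda(Y)$ via Lim's formula, and an $\eta$--invariant comparison that yields the Tristram--Levine signatures. But the central step---``comparing the orbifold computation on $\mathcal O$ with $n$ copies of the computation on $Y\times S^1$, the two differ only by a term supported near the branch locus''---is not an argument, and this is exactly where the paper does real work that you have not supplied.

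The monopole count is a global quantity; it does not localize near $K$ in any naive sense. What the paper actually does is the following. First, by Baldridge's result on circle actions, the irreducible monopole count on $X$ equals the sum, over the $n$ distinct orbifold $\spinc$ structures $\s_k$ on the $3$--orbifold $Y^o$, of the orbifold Seiberg--Witten counts $\#\M^{\irr}(Y^o,\s_k)$. (You allude to $\tau$--invariant monopoles on $\Sigma$ descending to orbifold monopoles, but you never say that this produces $n$ separate moduli problems indexed by the isotropy representation along $K$; without this the factor of $n$ has no source.) Second---and this is the substantive step you are missing---each $\#\M^{\irr}(Y^o,\s_k)$ is compared to $\#\M^{\irr}(Y)$ by neck--stretching along the torus boundary of the knot exterior $N$: both counts become intersection numbers of the image of $\M^{\irr}(N^*)$ with specific circles $\S_{2k/n+1-\delta}$ and $\S_{1-\delta}$ in the character torus $\chi(T)$, and their difference is identified with a spectral flow of twisted Dirac operators on $N^*$. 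This ``pillowcase'' mechanism is what turns the global comparison into something computable; there is no shortcut via equivariant APS on $X$ alone, because the index--theoretic correction $w(X,g,\beta)$ and the monopole count each individually depend on the metric and only their sum is invariant.

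Two smaller points. The paper does not use a genuinely conical metric on $Y$; it uses a smooth orbifold (bullet--type) metric of positive scalar curvature on $D^o(K)$, which pulls back to a smooth metric on $\Sigma$ and avoids the analytic difficulties you would face with a cone singularity. And for the signature $\eta$--invariant, the paper passes through $0$--surgery $Y(0)$ and $\Sigma(0)$ so that the branched cover becomes an honest unbranched cover, after which the $\rho$--invariants of $Y(0)$ are identified with $\sign_{m/n}(K)$; the Dirac $\eta$--invariant comparison likewise goes through $Y(0)$ and produces exactly the spectral flow terms that cancel those from the monopole count. Your sketch of this part is in the right direction but underestimates how the Dirac and monopole pieces must be matched term by term before the universal constant can be pinned down (by evaluating on the unknot in $S^3$).
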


We should mention that formula \eqref{E:lsw} was independently proved by Langte Ma \cite{ma:surgery} using a different set of techniques. His result actually holds without the assumption that $\Sigma$ is a rational homology sphere.

%%%%%%%%%%%%%%%%%%%%%%%%%%%%%%%%%%%%%%%%%%%%%%%

\subsection{Motivation} As in our previous work, the research in this paper is motivated by a conjectural relationship between two gauge theoretic invariants of homology $S^{1}\times S^{3}$s, one from the Donaldson theory and the other from the Seiberg--Witten theory. In this section, we discuss our motivation and provide some further implications.

%%%%%%%%%%%%%%%%%%%%%%%%%%%%%%%%%%%%%%%%%%%%%%%

\subsubsection{The Witten--style conjecture for finite-order mapping tori} 
Recall that classical gauge theoretic invariants of a smooth closed oriented 4-manifold $X$ (the Seiberg-Witten invariants and the Donaldson polynomial invariants) are only defined when $X$ satisfies the condition $b^{+}_{2}(X)\geq 1$. However, with suitable modifications, some of these invariants can be defined for other 4-manifolds. In particular, let $X$ be a smooth closed oriented 4-manifold such that
\begin{equation}\label{E:zz}
H_*(X;\Z) = H_*(S^1\times S^3;\Z)\quad\text{and}\quad H_*(\tilde X;\Q) = H_*(S^3;\Q),
\end{equation}
where $\tilde X$ is the universal abelian cover of $X$. Then there are two gauge theoretic invariants for $X$. The first one is the invariant $\lsw(X)$ we mentioned earlier. It was defined in \cite{MRS} by counting Seiberg--Witten monopoles on $X$ and modifying this count by an index-theoretic correction term. The second invariant, called $\lfo(X)$, was defined by Furuta and Ohta \cite{FO} as one quarter times the degree zero Donaldson polynomial of $X$. Furuta and Ohta used somewhat more restrictive hypotheses on $X$; we extended their definition to all manifolds $X$ satisfying \eqref{E:zz} in our paper \cite{LRS2}.

\begin{introconj}[\cite{MRS, LRS2}]\label{conj:lsw-lfo} For any $X$ satisfying (\ref{E:zz}), the following equality holds \[\lsw(X)=-\lfo(X).\] 
\end{introconj}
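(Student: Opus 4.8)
The plan is not to address Conjecture~\ref{conj:lsw-lfo} in general, but to verify it for the $4$-manifolds produced here: the mapping torus $X$ of the covering translation $\tau\colon\Sigma\to\Sigma$ (and, more generally, of any semi-free $\Z/n$ action on an integral homology sphere), whose Seiberg--Witten invariant is pinned down by Theorem~\ref{T:B}. By \eqref{E:lsw}, the conjecture for such $X$ is equivalent to the single identity
\[
\lfo(X)\;=\;n\cdot\lambda(Y)\;+\;\frac18\sum_{m=1}^{n-1}\sign_{m/n}(K),
\]
so everything reduces to evaluating the Furuta--Ohta invariant of this mapping torus and recognising the right-hand side.

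The first step is to read $\lfo(X)=\tfrac14 D^{0}(X)$ off the mapping-torus structure. The degree-zero Donaldson invariant of a mapping torus is a Lefschetz-type count: restricting an anti-self-dual $SU(2)$ connection in the $0$-dimensional moduli space computing $D^{0}(X)$ to a fibre $\Sigma$ and recording its holonomy around the base circle identifies that moduli space, after a $\tau$-equivariant holonomy perturbation making the irreducible $SU(2)$ character variety of $\Sigma$ nondegenerate, with the $\tau$-fixed points in that character variety, each weighted by the sign with which $\tau$ acts on the determinant line of the deformation complex. Thus $\lfo(X)$ is a normalization of the equivariant Casson invariant $\lambda_{\tau}(\Sigma)$ --- equivalently of the Lefschetz number of $\tau$ on the instanton Floer homology of $\Sigma$, the Donaldson-theoretic counterpart of the monopole Lefschetz number of Theorem~\ref{T:A} --- together with the contribution of the circle of reducibles on $X$ determined by $H^{1}(X;\Z)\cong\Z$, which is absorbed into the index-theoretic correction term built into the definition of $\lfo$ for manifolds with $b_{1}>0$. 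For \emph{free} actions this identification of $\lfo$ of a mapping torus with an equivariant Casson invariant is already in \cite{RS:jktr}; the new content is to carry it out in the presence of the branch locus.

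The second step is purely three-dimensional: to evaluate $\lambda_{\tau}(\Sigma)$. A $\tau$-invariant flat $SU(2)$ connection on $\Sigma$ descends to a flat connection on the orbifold with underlying space $Y$ and cone locus $K$ of order $n$, i.e.\ to a representation $\pi_{1}(Y\setminus K)\to SU(2)$ sending the meridian of $K$ to an element with eigenvalues $e^{\pm\pi i m/n}$ for some integer $0\le m\le n$, and conversely each such representation lifts to a definite signed number of $\tau$-invariant flat connections on $\Sigma$. Sorting the equivariant count by $m$, the two central strata $m\in\{0,n\}$ collect the flat $SU(2)$ connections pulled back from $Y$ and, because of the $n$-fold nature of the construction, contribute $n\cdot\lambda(Y)$; for $0<m<n$ the stratum of representations with meridian holonomy in the conjugacy class of eigenvalue $e^{\pi i m/n}$ is a compact, generically $0$-dimensional slice of the $SU(2)$ character variety of the knot exterior, whose signed count is a Casson--Lin-type invariant equal, by the signature formula for such counts, to a fixed multiple of $\sign_{m/n}(K)$; when the various normalizations (the factor $\tfrac14$ in $\lfo=\tfrac14 D^{0}$, the multiplicities in the Casson--Lin count, the factors of $2$ from the centre of $SU(2)$) are assembled, the net contribution to $\lfo(X)$ is $\tfrac18\sign_{m/n}(K)$. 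This is the route by which the signature defects of \eqref{E:lsw} reappear on the Donaldson side; it runs parallel to the classical study of equivariant Casson invariants and of the Casson--Lin invariant, and amounts to a Casson-type surgery computation relating the equivariant count on $\Sigma$ to the Casson invariant of $Y$. Summing over $m$ gives $\lfo(X)=n\cdot\lambda(Y)+\tfrac18\sum_{m=1}^{n-1}\sign_{m/n}(K)$, which with Theorem~\ref{T:B} yields $\lsw(X)=-\lfo(X)$.

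The hard part will be Step~1 --- proving that for a mapping torus the Furuta--Ohta invariant genuinely coincides with the equivariant Casson invariant. This requires achieving transversality $\tau$-equivariantly, with the perturbation of the $4$-dimensional moduli space on $X$ compatible with the perturbation of the flat moduli space on $\Sigma$; controlling the non-isolated reducible locus of $X$ (here $b_{1}(X)=1$) and checking that the index-theoretic correction in $\lfo$ reconstitutes exactly the arithmetic of $\lambda_{\tau}(\Sigma)$, with no leftover terms and with all the powers of $2$ relating $D^{0}(X)$, the Casson--Lin counts, and $\lfo$ accounted for; and fixing the equivariant orientations throughout. Step~2, while technically delicate, is an essentially standard one-parameter spectral-flow computation, parts of which are already available in the literature.
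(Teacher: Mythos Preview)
Your plan is essentially the paper's own route to Theorem~\ref{T:D}: identify $\M^*(X)$ with the $\tau$--fixed part of the $SU(2)$ character variety of $\Sigma$ (your Step~1 is Proposition~\ref{P:one}), pass from there to orbifold representations of $\pi_1^V(Y,K)$ stratified by meridian trace (your Step~2 is Section~\ref{S:orb}), and then invoke Herald's formula \cite{H} for the signed count of each stratum to get $n\cdot\lambda(Y)+\tfrac18\sum\sign_{m/n}(K)$. So the architecture is right.

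Two genuine confusions need fixing. First, there is \emph{no} index-theoretic correction term in the definition of $\lfo$: by \eqref{E:lfo} it is simply $\tfrac14\#\M^*(X)$, a count of \emph{irreducible} ASD connections. The circle of reducibles coming from $H^1(X;\Z)\cong\Z$ is excluded by fiat, not absorbed into a correction; the well-definedness of the count (independence of perturbation) is what the hypotheses \eqref{E:zz} buy. So the worry you flag at the end of Step~1 about ``the index-theoretic correction in $\lfo$'' is a phantom. Second, you repeatedly treat $\Sigma$ as an integral homology sphere (instanton Floer homology, equivariant Casson invariant), but in Theorem~\ref{T:D} it is only a \emph{rational} homology sphere. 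This is not cosmetic: when $H_1(\Sigma)\neq 0$ the character variety $\R(\Sigma)$ has non-trivial \emph{abelian} components, and some of these are $\tau$--fixed (precisely those with $\tau^*\alpha=\alpha^{-1}$; see Proposition~\ref{P:ab}). Such abelian $\alpha$ extend to \emph{irreducible} binary dihedral representations of $\pi_1(X)$, so they contribute to $\M^*(X)$ and must be accounted for in the bookkeeping (one-to-one over $\R^\tau_{\ab}$ versus two-to-one over $\R^\tau_{\irr}$, Proposition~\ref{P:one}; existence of constant lifts, Lemma~\ref{L:lift}; perturbation theory in Section~\ref{S:pert}). This, rather than reducibles on $X$, is the actual technical hurdle your Step~1 must clear.
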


\noindent
Note that Conjecture \ref{conj:lsw-lfo} relates a Seiberg-Witten type invariant to a Donaldson type invariant. Therefore, it can be thought of as a Witten--style conjecture \cite{witten:monopole} for homology $S^{1}\times S^{3}$s. 

The main results of this paper are inspired by Conjecture \ref{conj:lsw-lfo} for finite order mapping tori, and we confirm this conjecture for the mapping tori of Theorem \ref{T:B}. More precisely, we have the following theorem.

\begin{introthm}\label{T:D}
 Let $\Sigma$ be a rational homology sphere which is a cyclic branched cover of an integral homology sphere, with branch set a knot. Let $X$ be the mapping torus of the covering transformation $\tau:\Sigma\rightarrow \Sigma$. Then $X$ satisfies (\ref{E:zz}) and we have
\[
\lsw (X) = - \lfo (X).
\]
\end{introthm}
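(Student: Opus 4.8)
The plan is to play the two sides of the equality against each other. Theorem~\ref{T:B} already expresses $-\lsw(X)$ in closed form, so once $X$ is known to satisfy \eqref{E:zz} it suffices to compute $\lfo(X)$ and check that it equals the same expression $n\,\lambda(Y)+\tfrac18\sum_{m=1}^{n-1}\sign_{m/n}(K)$.

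First I would verify \eqref{E:zz}. Since $X$ fibers over $S^1$ with fiber $\Sigma$ and monodromy $\tau$, the Wang exact sequence reduces the homology computation to the action of $1-\tau_*$ on $H_*(\Sigma;\Z)$. Because $\Sigma$ is a rational homology sphere, $H_1(\Sigma;\Z)$ is finite; and because $\Sigma$ is the $n$-fold cyclic branched cover of an integral homology sphere along a knot, the $\Z/n$-coinvariants of $H_1(\Sigma;\Z)$ vanish (this is the standard description of the homology of cyclic branched covers via the Alexander module, which dies modulo $t-1$). Hence $1-\tau_*$ is surjective on $H_1(\Sigma;\Z)$, and a surjective endomorphism of a finite group is also injective. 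Feeding this into the Wang sequence gives $H_*(X;\Z)\cong H_*(S^1\times S^3;\Z)$, in particular $H_1(X;\Z)=\Z$, so the universal abelian cover $\tilde X$ is the infinite cyclic cover, which is homotopy equivalent to $\Sigma$; therefore $H_*(\tilde X;\Q)=H_*(\Sigma;\Q)=H_*(S^3;\Q)$. (This is the verification already implicit in the statement of Theorem~\ref{T:B}.)

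Next I would compute $\lfo(X)$. By definition $\lfo(X)$ is one quarter of the degree-zero Donaldson polynomial of $X$, a signed count of irreducible ASD $SU(2)$ connections on $X$. For the mapping torus of a finite-order diffeomorphism the usual dimensional reduction (as in Furuta--Ohta and in Collin--Saveliev's treatment of $\Z/n$-equivariant Casson theory) identifies these with $\tau$-equivariant flat $SU(2)$ connections on $\Sigma$, and after the standard perturbation and orientation bookkeeping it identifies $\lfo(X)$ with the equivariant Casson invariant of the $\Z/n$-manifold $\Sigma$. I would then invoke the branched-cover formula for that equivariant invariant — the gauge-theoretic analogue of the Fintushel--Stern/Neumann--Wahl computations — which splits it as $n\,\lambda(Y)$, the contribution of the flat connections pulled back from $Y$ counted with multiplicity $n$, corrected by a spectral-flow term along the branch knot computed by Atiyah--Patodi--Singer to be $\tfrac18\sum_{m=1}^{n-1}\sign_{m/n}(K)$. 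Combining this with the previous paragraph and with Theorem~\ref{T:B} yields
\[
\lfo(X)\;=\;n\,\lambda(Y)+\tfrac18\sum_{m=1}^{n-1}\sign_{m/n}(K)\;=\;-\lsw(X),
\]
which is the assertion.

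The main obstacle I anticipate lies in the third step. The identification of $\lfo(X)$ with the equivariant Casson invariant, and of the latter with $n\,\lambda(Y)$ plus a signature defect, must be carried out so that all conventions — the orientation and homology orientation on $X$, the sign normalization in the definition of $\lfo$, and the $1/8$ factor in front of the Tristram--Levine sum — agree on the nose with those built into $\lsw(X)$ in Theorem~\ref{T:B}, rather than merely up to an overall sign or constant. If the required branched-cover formula for the equivariant Casson invariant is not already available in exactly this generality, proving it — by an excision/gluing argument along the branch knot together with the APS spectral-flow computation for the reducibles — is where the real content sits; the remaining steps are formal.
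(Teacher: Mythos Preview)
Your proposal is correct and follows essentially the same route as the paper: verify \eqref{E:zz} via the Wang sequence and the surjectivity of $\tau_*-1$ on $H_1(\Sigma;\Z)$, then compute $\lfo(X)$ by identifying the irreducible ASD moduli space with $\tau$--equivariant flat connections on $\Sigma$ and, in turn, with orbifold representations of the knot complement, whose signed count is given by the Collin--Saveliev/Herald formula $n\lambda(Y)+\tfrac18\sum_{m}\sign_{m/n}(K)$; comparison with Theorem~\ref{T:B} finishes the proof. The only minor imprecision is your description of the mechanism behind the $\lfo$ formula---the signature terms enter via Herald's character--variety count rather than directly as an APS spectral--flow defect, and the $n\lambda(Y)$ term is not literally a ``multiplicity $n$'' pullback count---but you cite the correct reference and correctly flag this step as the substantive one.
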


\noindent
Note that, in their previous work \cite{RS:jktr}, the second and third authors verified Conjecture \ref{conj:lsw-lfo} for the mapping tori of all finite order diffeomorphisms $\tau: \Sigma \to \Sigma$ of integral homology spheres that generate a free group action on $\Sigma$.

\begin{introcor}\label{C:E}
Let $\Sigma$ be an integral homology sphere and $\tau: \Sigma \to \Sigma$ an orientation preserving diffeomorphism of order $n$ generating a semi-free action of $\Z/n$ on $\Sigma$. If $X$ is the mapping torus of $\tau$ then $\lsw(X)=-\lfo(X)$.
\end{introcor}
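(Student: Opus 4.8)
The plan is to deduce Corollary \ref{C:E} from Theorem \ref{T:D} by showing that a semi-free finite order diffeomorphism of an integral homology sphere is essentially the branched cover setup to which Theorem \ref{T:D} applies. First I would observe that since $\tau: \Sigma \to \Sigma$ generates a semi-free action of $\Z/n$, its fixed point set $\Fix(\tau)$ is either empty or, by the semi-freeness and a standard local linearization argument (Smith theory, together with the fact that a semi-free orientation-preserving action on a $3$-manifold fixes a submanifold of codimension $2$), a disjoint union of circles. The quotient $Y = \Sigma/\tau$ is then again an integral homology sphere: one sees this from the Smith--Gysin sequence for the branched cover, or directly from the fact that the transfer argument forces $H_*(Y;\Z) = H_*(S^3;\Z)$ when the action is semi-free and $\Sigma$ is a $\Z$-homology sphere.

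The next step is to rule out the multi-component fixed set and the free case. If $\Fix(\tau) = \emptyset$, the action is free, and then $\lsw(X) = -\lfo(X)$ is precisely the theorem of Ruberman--Saveliev quoted just before the corollary (\cite{RS:jktr}), so that case is already done. If $\Fix(\tau) \neq \emptyset$, I claim it is a single circle: this follows because $\Sigma \to Y$ is then a cyclic branched cover of an integral homology sphere with branch set $\Fix(\tau)$, and a branched cover of a $\Z$-homology sphere over a link with more than one component would have $b_1 > 0$, contradicting that $\Sigma$ is a $\Z$-homology sphere (the first homology of the $n$-fold cyclic branched cover of a $\mu$-component link surjects onto $(\Z/n)^{\mu-1}$). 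Hence $\Fix(\tau)$ is a single knot $K \subset Y$, and $\Sigma$ is precisely the $n$-fold cyclic branched cover of $Y$ along $K$ with $\tau$ the covering translation.

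At this point Theorem \ref{T:D} applies directly: $\Sigma$ is a rational homology sphere (indeed an integral homology sphere, a fortiori rational) that is the cyclic branched cover of the integral homology sphere $Y$ with branch set the knot $K$, and $X$ is the mapping torus of the covering transformation $\tau$. Theorem \ref{T:D} then gives $X \in$ \eqref{E:zz} and $\lsw(X) = -\lfo(X)$, which is the assertion of the corollary. Combining with the free case handled by \cite{RS:jktr} completes the proof.

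The main obstacle I anticipate is purely topological bookkeeping rather than any gauge-theoretic input: one must carefully justify that semi-freeness plus the $\Z$-homology-sphere condition on $\Sigma$ forces the fixed set to be connected (a single knot) and the quotient to be a $\Z$-homology sphere, so that the branched-cover hypotheses of Theorem \ref{T:D} are genuinely met. This is where one invokes Smith theory and the homology long exact sequence of the branched covering; everything else is a direct citation of Theorem \ref{T:D} and the prior free-action result.
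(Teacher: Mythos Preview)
Your proposal is correct and follows exactly the argument the paper intends: the corollary is stated immediately after Theorem~\ref{T:D} with the free case already attributed to \cite{RS:jktr}, and the non-free case is reduced to Theorem~\ref{T:D} by recognizing the quotient as a branched cover over a knot in an integral homology sphere. One small point of care: the bare transfer only gives that $H_1(Y;\Z)$ is $n$--torsion, so the cleanest route is to first use Smith theory (applied to any $\Z/p$--subaction, which by semi-freeness has the same fixed set) to get connectedness of $\Fix(\tau)$, and then run the five-term exact sequence for the unbranched cover of knot exteriors to conclude $H_1(Y\setminus K')\cong\Z$ generated by the meridian, hence $H_1(Y)=0$; your write-up has all of these ingredients, just slightly out of order.
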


%%%%%%%%%%%%%%%%%%%%%%%%%%%%%%%%%%%%%%%%%%%

\subsubsection{Normalized Lefschetz number in monopole and instanton Floer homology} Theorem~\ref{T:D} has an intriguing Floer theoretic interpretation in terms of an expected comparison of the reduced monopole homology groups $\hmred_*(Y)$ of Kronheimer and Mrowka ~\cite{kronheimer-mrowka:monopole} with the reduced instanton homology groups $\hfhat_*(Y)$ of Fr{\o}yshov ~\cite{froyshov:equivariant}.  As an extension of Witten's conjecture~\cite{witten:monopole} on closed $4$-manifolds, such a comparison should also include the maps between the homology groups in question induced by cobordisms. For the product cobordism between \emph{integral} homology spheres, this manifests itself in the known relationship between the Casson invariant and the Euler characteristic of the reduced homologies normalized by the respective $h$-invariants:
\begin{equation}
 \begin{aligned}
\chi(\hmred(\Sigma)) + h(\Sigma) & =  \lambda(\Sigma),\\
\frac12\, \chi(\hfhat(\Sigma)) - h_D (\Sigma) & = \lambda(\Sigma),
\end{aligned} \label{E:hcasson}
\end{equation}
where $h$ and $h_D$ are respectively the monopole and instanton Fr{\o}yshov invariants. Taking \eqref{E:hcasson} as a model, let us define the normalized Lefschetz numbers of a homology cobordism $W$ from an integral homology sphere $\Sigma$ to itself to be
\begin{align*}
& \Lef(W_*:\hmred(\Sigma) \to \hmred(\Sigma)) + h(\Sigma) \quad \text{and}\\
& \frac12\,\Lef(W_*: \hfhat(\Sigma) \to \hfhat(\Sigma)) - h_D(\Sigma)
\end{align*} 
in the monopole and instanton cases, respectively. In the special case of an orientation preserving self-diffeomorphism $\tau: \Sigma \to \Sigma$, the normalized Lefschetz numbers of its mapping cylinder is referred to as the normalized Lefschetz numbers of $\tau$. 

\begin{introcor}\label{C:lefschetz}
If $\Sigma$ is an integral homology sphere and $\tau: \Sigma \to \Sigma$ an orientation preserving diffeomorphism of order $n$ generating a semi-free action of $\Z/n$ on $\Sigma$, then the normalized monopole and instanton Lefschetz numbers of $\tau$ agree.
\end{introcor}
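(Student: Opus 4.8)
The plan is to show that each of the two normalized Lefschetz numbers of $\tau$ is equal to $\lfo(X)=-\lsw(X)$, where $X$ denotes the mapping torus of $\tau$; Theorem~\ref{T:D} (equivalently Corollary~\ref{C:E}) then forces the two to coincide. As in the setting of Corollary~\ref{C:E}, the semi-free action presents $\Sigma$ as the $n$--fold cyclic branched cover of a knot $K$ in an integral homology sphere $Y=\Sigma/\tau$, so Theorems~\ref{T:A} and~\ref{T:B} apply, with $\s$ the unique spin structure on $\Sigma$; in particular $h(\Sigma,\s)=h(\Sigma)$. Writing $\tau_*$ for the induced map on $\hmred(\Sigma)=\hmred(\Sigma,\s)$, rearranging the formula of Theorem~\ref{T:A} gives
\[
\Lef(\tau_*)+h(\Sigma)\;=\;n\cdot\lambda(Y)+\tfrac18\sum_{m=1}^{n-1}\sign_{m/n}(K),
\]
and the right-hand side equals $-\lsw(X)$ by Theorem~\ref{T:B}. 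Thus the normalized monopole Lefschetz number of $\tau$ is $-\lsw(X)$, which is $\lfo(X)$ by Theorem~\ref{T:D}.

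It then remains to prove the instanton counterpart of this computation, namely that for the mapping torus $X$ of $\tau$,
\[
\tfrac12\,\Lef(\tau_*:\hfhat(\Sigma)\to\hfhat(\Sigma))-h_D(\Sigma)\;=\;\lfo(X),
\]
that is, that $\tfrac14$ of the degree-zero Donaldson polynomial of $X$ is computed by the normalized instanton Lefschetz number of its monodromy. I would prove this by an equivariant Lefschetz fixed-point argument: after a generic $\tau$--invariant holonomy perturbation, the signed finite set of irreducible ASD $SU(2)$ connections on $X$ whose count defines $\lfo(X)$ is placed in sign-preserving bijection with the fixed points of the self-map that $\tau$ induces on the perturbed flat moduli space of $\Sigma$, while the single reducible connection accounts precisely for the correction term built into Fr\o yshov's reduced complex $\hfhat(\Sigma)$ and its invariant $h_D(\Sigma)$. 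This is the instanton analogue of the role played on the Seiberg--Witten side by the splitting theorem of \cite{LRS1}, which is what turns Theorem~\ref{T:B} into Theorem~\ref{T:A}.

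Granting this identity, the normalized instanton Lefschetz number of $\tau$ is $\lfo(X)$ as well; combined with the monopole computation and Theorem~\ref{T:D}, this shows that both normalized Lefschetz numbers of $\tau$ equal $\lfo(X)=-\lsw(X)$, which is the assertion of the corollary. The main obstacle is the instanton Lefschetz fixed-point formula above: establishing the equivariant transversality, fixing the orientation and grading conventions so that the normalization (the factor $\tfrac12$ and the overall sign) comes out correctly, and --- the most delicate point --- showing that the reducible connection on $X$ contributes exactly the $h_D(\Sigma)$ term, in parallel with the way the reducible Seiberg--Witten solution is handled in the splitting theorem of \cite{LRS1}.
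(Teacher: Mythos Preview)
Your monopole-side computation is correct and amounts to the same thing as the paper's: the paper just invokes the splitting formula of \cite{LRS1} directly to get $\Lef(\tau_*)+h(\Sigma)=-\lsw(X)$, whereas you reach the same conclusion by going through Theorems~\ref{T:A} and~\ref{T:B} (which is slightly circuitous, since Theorem~\ref{T:A} is itself derived from Theorem~\ref{T:B} via that splitting formula).

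The real issue is on the instanton side. The identity you set out to prove,
\[
\tfrac12\,\Lef\bigl(\tau_*:\hfhat(\Sigma)\to\hfhat(\Sigma)\bigr)-h_D(\Sigma)\;=\;\lfo(X),
\]
is precisely the splitting formula established by Anvari~\cite{anvari:splitting}, and the paper simply cites it. Your sketch --- equivariant holonomy perturbation, fixed-point count over the perturbed flat moduli space, reducible correction matching $h_D$ --- is indeed the shape of that argument, but carrying it out rigorously is a paper-length undertaking (equivariant transversality, orientation and grading bookkeeping, and the delicate analysis of the reducible, exactly as you flag). So your proposal is not wrong in spirit, but it leaves the hardest step as an exercise; you should instead invoke Anvari's result and be done.

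One further small gap: you assert that the semi-free action ``presents $\Sigma$ as the $n$--fold cyclic branched cover of a knot $K$ in an integral homology sphere $Y=\Sigma/\tau$'', but semi-free includes the free case, where there is no branch set and Theorems~\ref{T:A} and~\ref{T:B} do not apply. The paper's argument avoids this by working directly with the two splitting formulas and Corollary~\ref{C:E} (the latter already packaging the free case from \cite{RS:jktr} together with Theorem~\ref{T:D}), so it covers both cases uniformly.
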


\noindent
This can be seen as follows. Let $X$ be the homology $S^1 \times S^3$ obtained by gluing up the two boundary components of $W$ via the identity map. Then the normalized monopole Lefschetz number of $W$ equals $-\lsw(X)$ by the splitting formula \cite{LRS1}, and the normalized instanton Lefschetz number of $W$ equals $\lfo(X)$ by the splitting formula of Anvari \cite{anvari:splitting}. Comparing this with Corollary~\ref{C:E} completes the proof.

Note that for any 3-manifold,  it is conjectured \cite{kronheimer-mrowka:suture} that the sutured versions of monopole Floer homology and instanton Floer homology coincide. However, relations between all other versions remain mysterious. It is possible that Corollary \ref{C:lefschetz} is a shadow of some deeper relation between these two theories.

%%%%%%%%%%%%%%%%%%%%%%%%%%%%%%%%%%%%%%%%%%%%%%%%%

\subsection{Applications}\label{introapps}
%%When combined with the results of \cite{RS:jktr}, Theorem \ref{T:A} completes the proof of the conjecture that $\lfo(X) = -\lsw(X)$ for finite-order mapping tori. In addition, 
Theorem~\ref{T:A} can be used to study knots in $S^{3}$ and smooth concordance between them. Given a knot $K\subset S^3$ and an integer $n>1$, we let $\Sigma_{n}(K)$ be the $n$-fold cyclic branched cover of $K$. Denote by $L_{n}(K)$ the Lefschetz number of the map $\tau_{*}:HM^{\red} (\Sigma_{n}(K)) \to HM^{\red} (\Sigma_{n}(K))$ induced by the covering transformation $\tau$. Then we have the following corollary of Theorem \ref{T:A}, which generalizes \cite[Corollary F]{LRS2}.

\begin{introcor}\label{C:concordance invariant}
Let $n=p^{m}$ for $p$ a prime number. Then the integer $L_{n}(K)$ is an additive smooth concordance invariant. 
\end{introcor}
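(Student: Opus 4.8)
The plan is to read the corollary off from Theorem~\ref{T:A}, specialized to $Y=S^3$. Since $\lambda(S^3)=0$, formula \eqref{E:lef} becomes
\[
L_n(K)\;=\;\frac18\sum_{j=1}^{n-1}\sign_{j/n}(K)\;-\;h\bigl(\Sigma_n(K),\s\bigr),
\]
so it suffices to show that each of the two terms on the right is additive under connected sum and invariant under smooth concordance of $K$. (The prime-power hypothesis is used first of all to guarantee that $\Sigma_n(K)$ is a rational homology sphere, so that $\hmred(\Sigma_n(K))$, the term $h(\Sigma_n(K),\s)$, and hence $L_n(K)$ itself are all defined; here $\s$ denotes the distinguished $\tau$-invariant spin structure of Remark~\ref{R:unique}.)

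The signature term is the easy part. A Seifert matrix of $K_1\#K_2$ is a block sum of a Seifert matrix of $K_1$ and one of $K_2$, whence $\sign_{j/n}(K_1\#K_2)=\sign_{j/n}(K_1)+\sign_{j/n}(K_2)$ for each $j$; this is additivity. For concordance invariance one uses that $\sign_{j/n}$ equals the Tristram--Levine signature $\sigma_\omega$ at $\omega=e^{2\pi i j/n}$, which is a concordance invariant provided $\Delta_K(\omega)\neq0$, and that this nonvanishing holds for every knot $K$ and every $1\le j\le n-1$ precisely because $n=p^m$ --- this is the same fact that makes $\Sigma_n(K)$ a rational homology sphere, equivalently the statement that no cyclotomic polynomial $\Phi_{p^k}$ divides $\Delta_K$, which is immediate from $\Phi_{p^k}(1)=p$ and $\Delta_K(1)=\pm1$. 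Hence $\tfrac18\sum_{j}\sign_{j/n}(K)$ is an additive smooth concordance invariant.

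For the term $h(\Sigma_n(K),\s)$ I would argue as follows. For additivity, the $n$--fold cyclic branched cover of $S^3$ branched over $K_1\#K_2$ is diffeomorphic to $\Sigma_n(K_1)\#\Sigma_n(K_2)$ --- one sees this by splitting along the preimage (again a $2$--sphere) of the sphere that separates $K_1\#K_2$ into $K_1$ and $K_2$ --- and this diffeomorphism intertwines the covering translations, so it carries the $\tau$-invariant spin structure to the connected sum of the $\tau_i$-invariant ones. Combined with additivity of the Fr\o yshov invariant under connected sum, this yields $h(\Sigma_n(K_1\#K_2),\s)=h(\Sigma_n(K_1),\s)+h(\Sigma_n(K_2),\s)$. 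For concordance invariance, let $A\subset S^3\times[0,1]$ be a smooth concordance from $K_0$ to $K_1$; its $n$--fold cyclic branched cover $Z$ is a smooth cobordism from $\Sigma_n(K_0)$ to $\Sigma_n(K_1)$ over which the covering translation and the spin structure $\s$ extend, and --- as in Casson--Gordon theory, once again using $n=p^m$ --- $Z$ is a rational homology cobordism. Since the Fr\o yshov invariant is unchanged under a rational homology cobordism equipped with a spin structure restricting to the given ones on the two ends, $h(\Sigma_n(K_0),\s)=h(\Sigma_n(K_1),\s)$.

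Putting these together, $L_n(K)$ is additive and a smooth concordance invariant. The step that needs the most care is the concordance part: one must carry out the branched-cover homology bookkeeping carefully enough to confirm that $Z$ is genuinely a rational homology cobordism --- which fails for general $n$ and is exactly where the hypothesis $n=p^m$ is essential --- and check that the covering translation and the distinguished spin structures extend over and match across $Z$. One also needs naturality of $h$ under such cobordisms and its additivity under connected sum, in the forms valid for rational homology spheres equipped with a spin structure.
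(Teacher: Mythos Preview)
Your proof is correct and follows essentially the same approach as the paper: specialize Theorem~\ref{T:A} to $Y=S^3$ so that $L_n(K)$ is the sum of the signature term and the Fr\o yshov term, then observe that each is separately an additive concordance invariant. The paper is terser, citing Jabuka~\cite{jabuka:delta} for the concordance invariance of $h(\Sigma_n(K),\s)$ and declaring the signature facts standard, whereas you spell out the underlying arguments (the $\Phi_{p^k}(1)=p$ trick and the branched-cover rational-homology-cobordism construction).
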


Recall that a closed oriented 3-manifold $\Sigma$ is called an $L$-space if $H_1 (\Sigma; \Q) = 0$ and $HM^{\red} (\Sigma;\Z) = 0$. Recent work of Boileau, Boyer, and Gordon~\cite{boileau-boyer-gordon:branched-qp,boileau-boyer-gordon:branched-definite} has focused attention on the question of which branched covers of knots are $L$-spaces. This is of interest in its own right but also as a test case for the $L$-space conjecture~\cite{boyer-gordon-watson:L} equating the property of a rational homology sphere {\em not} being an $L$-space with the left-orderability of its fundamental group. Using $L_{n}(K)$, we can show that the property of a knot $K \subset S^3$ not having an $L$-space branched cover can sometimes hold for an entire concordance class of $K$.

\begin{introthm}\label{T:Donc}
Let $n=p^m$ for $p$ a prime number. There is a knot $K_n$ such that, for any knot $K$ that is smoothly concordant to $K_n$, its $n$--fold cyclic branched cover $\Sigma_n (K)$ is not an $L$-space. 
\end{introthm}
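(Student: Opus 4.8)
The plan is to reduce the statement, via Corollary~\ref{C:concordance invariant}, to exhibiting a single knot $K_n$ with $L_n(K_n)\neq 0$, and then to find such a knot among the torus knots by making the formula \eqref{E:lef} completely explicit. Here is the reduction. Since $n=p^m$, the cover $\Sigma_n(K)$ is a rational homology sphere for every knot $K\subset S^3$, so the notion of $L$-space applies to it; and if $\Sigma_n(K)$ is an $L$-space then $HM^{\red}(\Sigma_n(K))=0$, whence the Lefschetz number $L_n(K)$ of any self-map vanishes. By Corollary~\ref{C:concordance invariant} the integer $L_n$ is a smooth concordance invariant, so once we produce a knot $K_n$ with $L_n(K_n)\neq 0$, every knot $K$ concordant to $K_n$ satisfies $L_n(K)=L_n(K_n)\neq 0$, hence $HM^{\red}(\Sigma_n(K))\neq 0$, so $\Sigma_n(K)$ is not an $L$-space.

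To produce $K_n$, fix a prime $a\neq p$ and, for $k\in\Z_{>0}$, let $K_n=T(a,nak-1)$. Since $a$, $n$, and $nak-1$ are pairwise coprime, the $n$--fold branched cover is the Brieskorn integral homology sphere $\Sigma_n(K_n)=\Sigma(n,a,nak-1)$, which by Moser's classification of Dehn surgeries on torus knots is $(\pm 1/k)$--surgery on the torus knot $T(n,a)\subset S^3$ for an appropriate sign. Theorem~\ref{T:A} with $Y=S^3$, where $\lambda(S^3)=0$, gives $L_n(K_n)=\tfrac18\sum_{j=1}^{n-1}\sign_{j/n}(K_n)-h(\Sigma(n,a,nak-1))$. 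By Viro's signature formula the signature sum equals the signature $\sigma(F)$ of the Milnor fibre $F$ of $x^{a}+y^{nak-1}+z^{n}$; by the Casson invariant conjecture for Brieskorn spheres, a theorem of Fintushel--Stern and of Neumann--Wahl, one has $\tfrac18\sigma(F)=\lambda(\Sigma(n,a,nak-1))$; and combining these with \eqref{E:hcasson} we arrive at
\[
L_n(K_n)\,=\,\lambda\big(\Sigma(n,a,nak-1)\big)-h\big(\Sigma(n,a,nak-1)\big)\,=\,\chi\!\big(HM^{\red}(\Sigma(n,a,nak-1))\big).
\]
(One can instead bypass the Casson invariant conjecture and estimate the signature sum directly from the classical formula for the Tristram--Levine signatures of torus knots.)

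It remains to check that this is nonzero for some $k$. By the surgery formula expressing the $d$--invariant of $(\pm 1/k)$--surgeries in terms of the $V_0$--invariant, $h(\Sigma(n,a,nak-1))$ is independent of $k$. By Casson's surgery formula, $\lambda(\Sigma(n,a,nak-1))=\pm k\cdot\tfrac12\Delta_{T(n,a)}''(1)=\pm k\cdot\tfrac{(n^2-1)(a^2-1)}{24}$, which grows linearly in $k$ with nonzero slope. Therefore $L_n(K_n)\neq 0$ for all sufficiently large $k$, and by the reduction step this proves the theorem.

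I expect the only genuine obstacle to be convention bookkeeping rather than substance: one has to make the orientation and homology--orientation conventions of Theorem~\ref{T:A} agree with those in Viro's signature formula, in Moser's identification of $\Sigma(n,a,nak-1)$ with a Dehn surgery, and in the normalization of the Fr\o yshov invariant, so that the displayed identity and its sign are unambiguous. Once that is pinned down, the linear growth of the Casson invariant against the bounded Fr\o yshov invariant makes the non-vanishing immediate; for $n=2$ this is the kind of computation already carried out in \cite{LRS2}.
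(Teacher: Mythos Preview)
Your argument is correct and the overall strategy matches the paper's: reduce to finding a single knot with nonzero Lefschetz number, and look for it among torus knots whose $n$-fold branched covers are Brieskorn integral homology spheres. The two proofs diverge in how they establish $L_n(K_n)\neq 0$.

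The paper observes directly that for $K_n=T(q,r)$ with $q,r$ coprime to $p$, the covering translation $\tau$ on $\Sigma(n,q,r)$ lies in the Seifert $S^1$--action and is therefore isotopic to the identity; hence $\Lef(\tau_*)=\chi(HM^{\red}(\Sigma(n,q,r)))$ immediately, with no computation of signatures or Casson invariants. Non-vanishing is then deduced by citing the Heegaard Floer classification of Seifert $L$-spaces (Eftekhary, Rustamov, Can--Karakurt) together with the HF${}={}$HM isomorphism. You instead compute the right-hand side of \eqref{E:lef} explicitly via Viro's branched-cover signature formula and the Fintushel--Stern/Neumann--Wahl theorem, arriving at the same identity $L_n(K_n)=\lambda-h=\chi(HM^{\red})$ from \eqref{E:hcasson}; and you prove non-vanishing by letting the surgery parameter $k$ grow, using that $\lambda(\Sigma(n,a,nak-1))$ is linear in $k$ (Casson's surgery formula) while $h(\Sigma(n,a,nak-1))$ is independent of $k$ (the Ni--Wu/Rasmussen formula $d(S^3_{\pm 1/k}(K))=\mp 2V_0$).

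Your route is more computational but has the virtue of being self-contained: it avoids invoking the Heegaard Floer $L$-space classification and the HF${}={}$HM equivalence, at the cost of importing Viro's formula, the Casson invariant conjecture for Brieskorn spheres, and the $d$-invariant surgery formula. The paper's route is shorter and conceptually cleaner (the isotopy-to-identity observation does all the work), but leans on heavier black boxes for the non-vanishing step. Your caveat about sign and orientation conventions is apt; once those are fixed, both arguments go through.
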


In a further application in this vein, we give a systematic obstruction of branched covers being an $L$-space in terms of the Jones polynomial. 

\begin{introthm}\label{T:Jones}
 Let $K\subset S^{3}$ be a knot with $det(K)=1$ and $J'_K (-1)\neq 0$, where $J_K(t)$ is the Jones polynomial of $K$. Then, for any $m\geq1$, the $2m$-fold cyclic branched cover $\Sigma_{2m}(K)$ is not an $L$-space.
\end{introthm}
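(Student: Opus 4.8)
Throughout, write $L_n(K)=\Lef(\tau_*\colon \hmred(\Sigma_n(K))\to\hmred(\Sigma_n(K)))$ as in the applications section, so that $\hmred(\Sigma_n(K))=0$ forces $L_n(K)=0$.

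The plan is to prove the contrapositive: assuming that $\Sigma_{2m}(K)$ is an $L$-space, I will deduce that $J'_K(-1)=0$. An $L$-space is in particular a rational homology sphere, so every cyclic branched cover $\Sigma_d(K)$ with $d\mid 2m$ is then a rational homology sphere, and the hypothesis $\det(K)=1$, i.e. $|\Delta_K(-1)|=1$, makes the double branched cover $\Sigma_2(K)$ an \emph{integral} homology sphere. The first step is a descent from $\Sigma_{2m}(K)$ to $\Sigma_2(K)$: the covering $\Sigma_{2m}(K)\to\Sigma_2(K)$ is an $m$-fold cyclic branched cover over a knot, which I would factor as a tower of cyclic branched covers of prime order (one factor for each prime dividing $m$, counted with multiplicity), each an honest $\Z/p$ action with a one-dimensional fixed point set. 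A Smith-type inequality for monopole Floer homology, applied stage by stage, shows that every term of the tower — in particular $\Sigma_2(K)$ — is again an $L$-space; equivalently $\hmred(\Sigma_2(K))=0$. (For $m=1$ the descent is vacuous and the argument below is just Theorem~\ref{T:A} with $n=2$ together with Mullins' formula.)

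With $\hmred(\Sigma_2(K))=0$ in hand, the second step computes the monopole Lefschetz number of the covering involution $\tau$ on $\Sigma_2(K)$ in two ways. Theorem~\ref{T:A}, applied with $Y=S^3$ (so $\lambda(Y)=0$), with $n=2$, and with $\sign_{1/2}(K)=\sigma(K)$ the ordinary signature of $K$, gives $\Lef(\tau_*)=\tfrac18\,\sigma(K)-h(\Sigma_2(K),\s)$; since $\hmred(\Sigma_2(K))=0$ the left-hand side vanishes, so $h(\Sigma_2(K))=\tfrac18\,\sigma(K)$. On the other hand, the normalization \eqref{E:hcasson} for the integral homology sphere $\Sigma_2(K)$ reads $\chi(\hmred(\Sigma_2(K)))+h(\Sigma_2(K))=\lambda(\Sigma_2(K))$, which with $\hmred=0$ yields $\lambda(\Sigma_2(K))=h(\Sigma_2(K))=\tfrac18\,\sigma(K)$. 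The third step invokes Mullins' formula for the Casson invariant of a double branched cover in terms of the Jones polynomial: for a knot $K\subset S^3$ with $\det(K)\neq 0$,
\[
\lambda(\Sigma_2(K))\; =\; c_0\cdot\frac{J'_K(-1)}{J_K(-1)}\; +\; \tfrac18\,\sigma(K),
\]
where $c_0$ is a fixed nonzero universal constant, and $\det(K)=1$ guarantees $J_K(-1)=\pm 1$ so the fraction makes sense. Comparing this with the conclusion $\lambda(\Sigma_2(K))=\tfrac18\,\sigma(K)$ of the second step, the two $\tfrac18\,\sigma(K)$ terms cancel and one is left with $c_0\cdot J'_K(-1)/J_K(-1)=0$, hence $J'_K(-1)=0$, contradicting the hypothesis. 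Therefore $\Sigma_{2m}(K)$ is not an $L$-space.

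The step I expect to be the main obstacle is the descent in the first paragraph — that the $L$-space property passes to the intermediate quotients when $2m$ is not a prime power. One must verify that the Smith inequality for monopole Floer homology with $\mathbb{F}_p$ coefficients genuinely bounds the total rank of $\hmred$ of each quotient by that of the cover, tracking the one-dimensional fixed-point set of each $\Z/p$ action and the relevant $H_1$-torsion, so that "minimal rank upstairs" really propagates to "minimal rank downstairs"; some care with the $\tau$-invariant spin structure $\s$ along the tower is also needed. The conceptual point that makes the rest go through is that the coefficient of $\sigma(K)$ produced by Theorem~\ref{T:A} for $n=2$ coincides with the one in Mullins' formula, so the signature terms cancel; this, and the exact value of $c_0$, are pinned down by a routine bookkeeping, sanity-checked for instance on $K=T_{3,5}$, where $\Sigma_2(K)=\Sigma(2,3,5)$ is an $L$-space, $\lambda(\Sigma(2,3,5))=\tfrac18\,\sigma(T_{3,5})$, and $J'_{T_{3,5}}(-1)=0$.
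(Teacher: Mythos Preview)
Your argument has a genuine gap at the descent step, and the paper's proof shows that this step is entirely avoidable. You try to deduce that $\Sigma_2(K)$ is an $L$-space from the assumption that $\Sigma_{2m}(K)$ is, by an iterated Smith-type inequality for monopole Floer homology along a tower of $\Z/p$ branched covers. You correctly flag this as the main obstacle, and indeed no such inequality is established in the paper (or invoked from the literature); the available Smith-type results for Floer theories do not immediately yield ``$L$-space upstairs $\Rightarrow$ $L$-space downstairs'' in this branched setting. So as written the proposal is incomplete for $m>1$.

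The paper sidesteps this completely with a simple trick: apply Theorem~\ref{T:A} \emph{twice on the same manifold} $\Sigma_{2m}(K)$, once to the covering translation $\tau$ (viewing $\Sigma_{2m}(K)\to S^3$ as a $2m$-fold branched cover of $K$) and once to $\tau^{2}$ (viewing $\Sigma_{2m}(K)\to\Sigma_2(K)$ as an $m$-fold branched cover of the lift of $K$). Since $\Sigma_{2m}(K)$ is assumed to be an $L$-space, \emph{both} Lefschetz numbers vanish, giving two expressions for the same $h(\Sigma_{2m}(K),\s)$. Equating them and using the standard signature additivity for iterated branched covers yields
\[
\lambda(\Sigma_2(K))=\tfrac{1}{8}\,\sign_{1/2}(K),
\]
with no hypothesis whatsoever on $\hmred(\Sigma_2(K))$. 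Mullins' formula then gives $J'_K(-1)=0$ exactly as in your final step. Thus the key idea you are missing is that $\tau^2$ already lives on $\Sigma_{2m}(K)$, so the $L$-space hypothesis kills its Lefschetz number directly; one never needs to know anything about $\Sigma_2(K)$ beyond its Casson invariant appearing as the coefficient in Theorem~\ref{T:A}.
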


\noindent
There are, of course, plenty of examples of knots $K$ whose cyclic branched covers $\Sigma_n (K)$ are not $L$-spaces for all $n$. The novelty of our result is in that it produces such examples in a systematic way. Note that $\Sigma_{2m} (K)$ in the above theorem may fail to be an $L$-space simply because it is not a rational homology sphere. However, since $\det(K) = 1$, the manifold $\Sigma_{2m}(K)$ is an $m$--fold cyclic branched cover of the integral homology sphere $\Sigma_2 (K)$ and so it is automatically a rational homology sphere whenever $m$ is a prime power. 

Finally, recall the conjectural behavior~\cite[Remark 4.2]{RS:jktr} of the invariant $\lsw$ under orientation reversal. 

\begin{introconj}\label{C:triang}
Let $X$ be a smooth spin rational homology $S^1 \times S^3$ which is oriented and homology oriented, and denote by $-X$ the manifold $X$ with reversed orientation but the same homology orientation. Then $\lsw(-X) = -\lsw(X)$. 
\end{introconj}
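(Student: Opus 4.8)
The statement is a conjecture, so what follows is a strategy rather than a proof. First I would dispose of the mapping-torus case, where the conjecture is an immediate consequence of Theorem~\ref{T:B}. If $X$ is the mapping torus of the covering translation $\tau$ of the $n$--fold cyclic branched cover $\Sigma\to(Y,K)$, then reversing the $4$--manifold orientation of $X$ yields the mapping torus of $\tau$ acting on $-\Sigma$, and $-\Sigma$ is precisely the $n$--fold cyclic branched cover of the pair $(-Y,K)$ (the underlying manifold and the defining surjection of $\pi_1(Y\setminus K)$ onto $\Z/n$ are unchanged), which is again a rational homology sphere. One checks that with its standard orientation and homology orientation this mapping torus is $-X$ equipped with the \emph{same} homology orientation as $X$: only the $4$--manifold orientation flips, while $H^1$ and its preferred generator are untouched. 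Applying \eqref{E:lsw} to $-X$, together with the classical facts $\lambda(-Y)=-\lambda(Y)$ for the Casson invariant and $\sign_{m/n}(K\subset -Y)=-\sign_{m/n}(K\subset Y)$ for the Tristram--Levine signatures (mirroring the ambient $3$--manifold reverses the signature jumps, as these are read off an intersection form whose sign flips), one obtains
\begin{align*}
-\lsw(-X) &= n\,\lambda(-Y)+\tfrac18\sum_{m=1}^{n-1}\sign_{m/n}(K\subset -Y)\\
&= -\Big(n\,\lambda(Y)+\tfrac18\sum_{m=1}^{n-1}\sign_{m/n}(K)\Big) = \lsw(X),
\end{align*}
so $\lsw(-X)=-\lsw(X)$. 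In particular Conjecture~\ref{C:triang} holds for all $X$ to which Theorem~\ref{T:D} applies.

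For a general spin $X$ satisfying \eqref{E:zz} I would return to the definition $\lsw(X,g,\beta)=\#\M(X,g,\beta)-w(X,g,\beta)$ of \cite{MRS} and treat the two terms separately. The correction term $w$ is the sum of the index of an end--periodic Dirac operator built from a cross--section $Y$ of $X$ and a signature defect of a compact cap; reversing orientation flips the signature of the cap, and, through the end--periodic index theorem (which expresses the Dirac index as a local $\hat{A}$--integral over the end plus an $\eta$--invariant of $Y$), both contributions change sign, so one expects $w(-X)=-w(X)$. This piece should be essentially routine once the orientation conventions for $\eta_Y$ and for the homology orientation are pinned down. The more delicate point is the monopole count: since the relevant $\spinc$ structure is the unique one on $X$ and hence self-conjugate, charge conjugation $(A,\Phi)\mapsto(\bar A,\bar\Phi)$ composed with orientation reversal should identify the perturbed Seiberg--Witten moduli space of $(-X,g)$ with that of $(X,g)$, for a suitably conjugated perturbation, at least as sets.

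The hard part will be to control what this identification does to the \emph{orientations} of the moduli spaces. The signed count $\#\M$ is defined through a trivialization of the determinant line of the Seiberg--Witten deformation operator, rigidified by the homology orientation, and one must show that the composite of orientation reversal and charge conjugation reverses this trivialization globally, contributing an overall sign $-1$ (plausibly traceable to the parity of the index of the Dirac part of the deformation complex by a local-model computation) in a manner compatible with the sign built into $w$ and with the conventions of \cite{MRS}. Carrying this out in the presence of reducible solutions and of the end--periodic behaviour of the equations is exactly the step that is not yet available, and it is what keeps Conjecture~\ref{C:triang} open in general. A tempting shortcut would be to deduce it from the behaviour of the Furuta--Ohta invariant $\lfo$ under orientation reversal via Conjecture~\ref{conj:lsw-lfo}, but as the latter is itself unproven this does not (yet) close the argument.
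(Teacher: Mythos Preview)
Your treatment of the branched--cover mapping tori via Theorem~\ref{T:B} is correct: reversing the orientation of $Y$ flips both the Casson invariant and the Tristram--Levine signatures, so \eqref{E:lsw} applied to $(-Y,K)$ yields $\lsw(-X)=-\lsw(X)$. However, you have identified ``the mapping--torus case'' with this very special subclass. The paper actually verifies the conjecture for \emph{every} mapping torus of an orientation preserving diffeomorphism of a rational homology sphere (Theorem~\ref{C:orient}), not only those arising as branched covering translations, and its argument is quite different from yours.

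The paper's route does not pass through Theorem~\ref{T:B} at all; it uses the splitting formula of \cite{LRS1},
\[
\lsw(X)\;=\;-\Lef\big(\tau_*:HM^{\red}(\Sigma,\s)\to HM^{\red}(\Sigma,\s)\big)\;-\;h(\Sigma,\s),
\]
applied to both $X$ and $-X$. Since $h(-\Sigma,-\s)=-h(\Sigma,\s)$ by additivity of the Fr{\o}yshov invariant, the statement reduces to $\Lef((-\tau)_*)=-\Lef(\tau_*)$. This in turn is deduced from the Kronheimer--Mrowka duality isomorphism $\widecheck{HM}_a(-\Sigma,-\s)\cong\widehat{HM}_{1+a}(\Sigma,\s)^*$, which intertwines $(-\tau)_*$ with the dual of $\tau_*$ while shifting the canonical mod~$2$ grading by one; the grading shift is precisely what produces the sign. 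This argument needs no knowledge of where $\tau$ came from, hence covers arbitrary mapping tori.

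Your remarks on the general case are reasonable heuristics, but note that the orientation analysis of the moduli space you describe is indeed the sticking point and the paper makes no claim beyond mapping tori. What your approach buys is an explicit, formula--level check in the branched--cover setting; what the paper's approach buys is the full mapping--torus case via Floer--theoretic duality, which your method cannot reach since Theorem~\ref{T:B} simply does not apply there.
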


\noindent 
Note that proving this conjecture would provide an alternate route to the resolution of the triangulation conjecture~\cite{manolescu:triangulation}. We can verify Conjecture \ref{C:triang} in a special case.

\begin{introthm}\label{C:orient}
Let $X$ be the mapping torus of an orientation preserving diffeomorphism (not necessarily of finite order) of a rational homology sphere. Then, for any choice of spin structure on $X$, we have 
\[
-\lsw(X) = \lsw(-X).
\]
\end{introthm}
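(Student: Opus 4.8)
The plan is to reduce the statement to a computation of $\lsw$ via its original definition in \cite{MRS}, where $\lsw(X)$ is a signed count of irreducible Seiberg--Witten monopoles on $X$ corrected by an index-theoretic term built from $\eta$-invariants on the three-manifold $\Sigma$. The key observation is that orientation reversal on $X$ affects the two ingredients of this formula in complementary ways. First I would note that, since $X$ is a mapping torus $\Sigma \times_\phi S^1$ for some orientation-preserving $\phi:\Sigma\to\Sigma$, both $X$ and $-X$ are mapping tori of the \emph{same} diffeomorphism but with the $S^1$-direction reversed; equivalently, $-X$ is the mapping torus of $\phi^{-1}$. The Seiberg--Witten equations on a mapping torus, in a suitable temporal gauge, become a gradient-flow (or rather a perturbed flow with periodic behavior) on the configuration space of $\Sigma$, and reversing the orientation of $X$ corresponds to reversing the flow direction. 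The irreducible solution set is therefore in canonical bijection for $X$ and $-X$, and one must track the effect of orientation reversal on (i) the signs with which solutions are counted, and (ii) the spectral-flow/index correction term.

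The main steps would be: (1) Recall from \cite{MRS} the precise formula $\lsw(X) = \#\M^{\irr}(X) - w(X,g,\beta)$, where $w$ is the correction term expressed through the index of the Dirac-type operator and $\eta$-invariants ($\eta_{\mathrm{sign}}$ and $\eta_{\mathrm{Dir}}$) on $\Sigma$. (2) Show that the moduli space count $\#\M^{\irr}$ is odd under orientation reversal, i.e. $\#\M^{\irr}(-X) = -\#\M^{\irr}(X)$; this is the index-theory sign comparison, coming from the fact that the linearized operator on $-X$ is (conjugate to) the adjoint of the one on $X$, so the orientation of the determinant line and hence the local sign flips, or alternatively from a direct analysis in the mapping-torus picture where reversing the flow swaps the roles of stable and unstable manifolds. (3) Show that the correction term is also odd, $w(-X) = -w(X)$: the relevant $\eta$-invariants on $\Sigma$ appear with a sign governed by the orientation of the four-manifold, and since $\Sigma$ itself is unchanged while the bounding four-manifold's orientation reverses, the $\eta_{\mathrm{sign}}$ contribution changes sign (by the signature-defect formula) and the Dirac $\eta$-term changes sign as well (Dirac operator for the reversed orientation is minus the original up to conjugation). (4) Combine (2) and (3): $-\lsw(-X) = -\#\M^{\irr}(-X) + w(-X) = \#\M^{\irr}(X) - w(X) = \lsw(X)$, which upon rearranging is exactly $-\lsw(X) = \lsw(-X)$.

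The hard part will be step (3), the careful bookkeeping of signs in the index-theoretic correction term. The term $w$ in \cite{MRS} is not simply an $\eta$-invariant of $\Sigma$ but involves a choice of four-manifold bounding $\Sigma$ (or a path of metrics/connections) together with an excess-index contribution, and one must verify that every constituent --- the APS $\rho$-invariant type term, the signature term $\eta_{\mathrm{sign}}$, and any integer index shifts --- transforms with the correct parity under reversing the orientation of $X$ while keeping $\Sigma$ fixed. A clean way to organize this is to express $w(X)$ entirely in terms of data on $\Sigma$ and the isotopy class of $\phi$ (using that $X$ is a mapping torus), then observe that passing to $-X$ replaces $\phi$ by $\phi^{-1}$ and reverses a sign in the $\eta$-invariant normalization; the symmetry $\eta(\mathcal{D}_{\phi^{-1}}) = -\eta(\mathcal{D}_\phi)$ for the relevant self-adjoint operators then does the work. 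One should also confirm that the spin structure on $X$ restricts to a fixed spin structure on $\Sigma$ that is preserved, so there is no ambiguity in identifying the moduli spaces and $\eta$-invariants for $X$ and $-X$; this uses that $\phi$ (being orientation-preserving and of the stated type) acts trivially enough on spin structures, or simply that we are free to match them up since the theorem is stated for a fixed choice on $X$.
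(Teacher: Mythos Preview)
Your approach is genuinely different from the paper's and, as written, has a real gap. The paper does \emph{not} go back to the original MRS definition; instead it invokes the splitting formula of \cite{LRS1},
\[
\lsw(X)\;=\;-\Lef\!\big(\tau_*:HM^{\red}(\Sigma,\s)\to HM^{\red}(\Sigma,\s)\big)\;-\;h(\Sigma,\s),
\]
applies it to both $X$ and $-X$ (viewing $-X$ as the mapping torus of $-\tau:-\Sigma\to-\Sigma$), uses the additivity of the Fr{\o}yshov invariant to get $h(-\Sigma,-\s)=-h(\Sigma,\s)$, and then reduces to showing $\Lef((-\tau)_*)=-\Lef(\tau_*)$. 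This last identity is proved via Kronheimer--Mrowka duality $\widecheck{HM}_a(-\Sigma,-\s)\cong\widehat{HM}_{1+a}(\Sigma,\s)^*$ together with a grading check, so that $(-\tau)_*$ is the dual of $\tau_*$ with a degree shift by one. The whole argument lives in Floer theory and never touches moduli counts or $\eta$--invariants on $X$.

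The gap in your route is step (3), and it is not just bookkeeping. You write the correction term as $w(X)=\tfrac12\eta(\D_{\theta(\Sigma)})+\tfrac18\eta_{\mathrm{sign}}(\Sigma)$, but this formula is only available in the \emph{product end} case, i.e.\ when the infinite cyclic cover $\tilde X\cong\mathbb R\times\Sigma$ carries a product metric. That happens exactly when $\phi$ can be taken to be an isometry---which is automatic for finite order (average the metric) but generally fails for infinite-order $\phi$. For a non-isometric $\phi$ the MRS correction term is an \emph{end-periodic} index quantity, not a combination of ordinary three-dimensional $\eta$--invariants on a single fiber, and your symmetry argument ``$\eta(\mathcal D_{\phi^{-1}})=-\eta(\mathcal D_\phi)$'' has no obvious meaning in that setting. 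Step (2) is also thinner than it looks: reversing orientation changes the self-dual/anti-self-dual decomposition, so the Seiberg--Witten equations on $X$ and $-X$ are not literally the same system with time reversed, and the ``adjoint operator flips the determinant line'' heuristic needs a genuine argument for these particular moduli spaces with the MRS orientation conventions. In short, your outline is plausible for finite-order $\phi$ but does not cover the general statement, whereas the paper's Floer-theoretic proof via the splitting formula handles arbitrary $\phi$ uniformly and avoids all of this analytic sign-tracking.
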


\noindent
Theorem \ref{C:orient} will actually follow from our splitting formula \cite{LRS1}. We decided to include Theorem \ref{C:orient} 
here because of its relevance to the main result of this paper, Theorem \ref{T:B}.

%%%%%%%%%%%%%%%%%%%%%%%%%%%%%%%%%%%%%%%%%%%%%%%%%%%%%%%%%%

\subsection{An outline of the proof} 
We will first prove Theorem \ref{T:B} by computing the invariant $\lsw (X)$ directly from its definition \cite{MRS},
\[
\lsw (X) = \#\,\M (X,g,\beta) + w(X,g,\beta),
\]
where $g$ and $\beta$ are generic metric and perturbation, $\M(X,g,\beta)$ the Seiberg--Witten moduli space, and $w(X,g,\beta)$ an index theoretic correction term. In the special case of the mapping torus $X$ at hand, we have an orbifold circle bundle $\pi: X \to Y^o$, where $Y^o$ is the orbifold with the underlying space $Y$, the singular set $K$, and the cone angle $2\pi/n$ along $K$. According to Baldridge \cite{Baldridge}, for the right choice of metrics and perturbations, the moduli space $\M(X,g,\beta)$ splits into a disjoint union of the orbifold Seiberg--Witten moduli spaces corresponding to all possible orbifold $\spinc$ structures on $Y^o$. We relate these orbifold moduli spaces to the Seiberg--Witten moduli space on $Y$ in Section \ref{S:monopoles} using an argument reminiscent of the pillowcase argument of Herald \cite{H} in Donaldson's theory. The correction term $w(X,g,\beta)$ for the mapping torus $X$ is just a combination of the $\eta$--invariants of $Y$, which we calculate in Section \ref{S:dirac} and Section \ref{S:sign} using surgery techniques and the splitting formula of Mazzeo and Melrose \cite{MM}. All of this gives us a formula for $\lsw (X)$ in terms of certain invariants of $Y$. It is converted into \eqref{E:lsw} in Section \ref{S:lsw} using the formula of Lim \cite{Lim1} for the Casson invariant in the Seiberg--Witten theory.

Theorem \ref{T:A} follows easily from Theorem \ref{T:B} using the splitting theorem of \cite{LRS1}. Theorem \ref{T:D} is proved in Section \ref{S:lfo}. It can be viewed as a generalization of \cite{RS2} to rational homology spheres, or as a generalization of \cite[Theorem 7.1]{LRS2} to $n \ge 2$, and it is proved by essentially the same methods. Proofs for all of the applications are contained in Section~\ref{S:applications}.

%%%%%%%%%%%%%%%%%%%%%%%%%%%%%%%%%%%%%%%%%%%%%%%%%

\medskip\noindent
\textbf{Acknowledgments:}\; 
We thank the organizers of the 2018 Conference on Gauge Theory at the University of Regensburg, where the broad outline of this project took shape. We thank Tye Lidman for pointing out the references \cite{can-karakurt:lattice,Eftekhary,rustamov:plumbed} on Heegaard--Floer $L$-spaces.

%%%%%%%%%%%%%%%%%%%%%%%%%%%%%%%%%%%%%%%%%%%%%%%%%

\section{Counting Seiberg--Witten monopoles}\label{S:monopoles}
Let $K \subset Y$ be a knot in an oriented integral homology 3-sphere. Denote by $Y^o$ the orbifold with the underlying space $Y$, the singular set $K$, and the cone angle $2\pi/n$ along $K$. In this section, we study monopoles on $Y^o$ using the Seiberg--Witten theory on manifolds with product ends. For the latter, we follow closely the exposition in Lim \cite{Lim2, Lim1}. 

%%%%%%%%%%%%%%%%%%%%%%%%%%%%%%%%%%%%%%%%%%%%%%%%%

\subsection{Monopoles on product end manifods} \label{section: monopole on product end}
Let $D(K)$ be a tubular neighborhood of $K \subset Y$ and $N = Y - \Int (D(K))$ the knot exterior, which is a compact 3-manifold with boundary a 2-torus $T$. Associate with $N$ the product end  manifold
\[
N^* = N\, \largecup_{\,T}\, ([0,\infty) \times T).
\]
Fix a metric $g(N)$ on $N$ that restricts to a flat metric on the boundary and is a product metric in its collar neighborhood. This metric extends in an obvious fashion to a product end metric on the manifold $N^*$.

%Now we specify the metric on these manifolds and orbifolds. Let $g_{D(K)}, g_{D^{o}(K)}$ be the standard flat metric on $D(K)$ and $D^{o}(K)$. Then $g_{D^{o}(K)}$ lifts to a smooth metric $g_{D(\tilde{K})}$ on $D(\tilde{K})$. Fix a metric $g_{0}$ on $N$ which is product near the boundary of $N$; it extends in an obvious fashion to a metric on $\tilde{N}$ which is cylindrical over the end. Let $T$ be the boundary of $N$. For any $L>0$, by attaching a collar $[0,L]\times T^{2}$ to $T$, we obtain a metric $g_{L}$ on $N$, which lifts to a metric $\tilde{g}_{L}$ on $\tilde{N}$. We write the Riemannian 3-manifold $(Y,g_{L}\cup g_{D(K)})$ as $Y_{L}$ and use the notations $Y^{o}_{L},\Sigma_{L}$ in a similar way. We also let $N^{*}=N\cup_{T}([0,+\infty)\times T^{2})$.

%Let $S$ be the spinor bundle over $N$ and let $\mathcal{M}(T)$ be moduli space: 
%$$
%\{A: \text{ spin-c connection on }S|_{T}\mid A \text{ trivial in normal direction, } F_{A^{t}}=0\}/\{\text{gauge transformation on }T\}
%$$

%%%%%%%%%%%%%%%%%%%%%%%%%%%%%%%%%%%%%%%%%%%%%%%%%%

The manifold $Y$ has a unique spin structure. It restricts to a spin structure on $N$, which in turn extends to a spin structure on $N^*$ with spinor bundle $E$. Let $A$ be a unitary connection in the determinant bundle of $E$, and $\phi$ a spinor on $N^*$. Let us consider the $(\omega,\alpha)$-perturbed Seiberg--Witten equations 
\begin{equation}\label{E:sw1}
F_A + \omega = \tau(\phi), \quad \D_A (\phi) + \alpha\cdot \phi = 0,
\end{equation}
where $\omega$ is a closed 2-form on $N^*$ and $\alpha$ is a 1-form on $N^*$, both with coefficients in $i\mathbb R$ and with compact support in $\Int (N)$. The solutions $(A,\phi)$ of these equations will be called \emph{monopoles}. Denote by $\mathcal{M}_{\alpha,\omega}(N^{*})$ the $L^2$--moduli space of monopoles on $N^{*}$ with respect to the gauge group action. The monopoles in $\mathcal{M}_{\alpha,\omega}(N^{*})$ are known to have asymptotic values at infinity, with a flat connection and the zero spinor (a proof can be derived by crossing $N^{*}$ with $S^1$ as in~\cite{taubes:tori}; compare~\cite[Chapter 4.2]{nicolaescu:swbook}). This gives rise to a map
\[
R: \mathcal{M}_{\alpha, \omega} (N^{*})\longrightarrow \chi(T),
\]
where $\chi(T)$ is the moduli space of flat $U(1)$ connections on $\det(E|_{T})$, modulo gauge transformation on $E|_{T}$. Equivalently, we can view  $\chi(T)$ as the $U(1)$ character variety of $\pi_1 (T)$. One can easily see that $\chi(T)$ is a 2-torus; we will introduce a set of coordinates on $\chi(T)$ as follows.

Choose simple closed curves $m$, $\ell$ on the 2-torus $T$ so that $m$ bounds a disk in $D(K)$ and $\ell$ bounds a Seifert surface in $N$. Let $\D(T)$ be the Dirac operator on the 2-torus $T$ associated to the spin connection. 
 An easy calculation shows that there exists a unique point $[A_0] \in \chi(T)$ for which the coupled Dirac operator $\D_{A_0} (T)$ has non-zero kernel.
Then, for any $[A] \in \chi(T)$, we can write $A - A_{0} = \alpha \in \Omega^{1}(T; i \mathbb R)$ and define
\[
m(A) = -2i \int_{m} \alpha\quad \text{and}\quad \ell(A) = -2i \int_{\ell} \alpha.
\]
The assignment of $(m(A),\ell(A))$ to $[A]$ gives a homeomorphism $\chi (T)\to \mathbb {R}^{2}/(2\mathbb{Z})^{2}$. For $a, b\in \mathbb{R}/2\mathbb{Z}$, denote by $[A_{(a,b)}]$ the point in $\chi(T)$ with coordinates $m (A_{(a,b)}) = a$ and $\ell (A_{(a,b)}) = b$. Note that the restriction of the unique flat $U(1)$ connection on $Y$ to the torus $T$ is the flat connection $A_{(1,1)}$. 

We will treat $[A_{(0,0)}] = [A_0]$ as a singular point of $\chi(T)$. For sufficiently small $\epsilon > 0$, denote by $U_{\epsilon}$ the complement in $\chi(T)$ of the closed $\epsilon$--disk centered at $[A_{(0,0)}]$. Also, for future use, denote by $\S_{a}$ the circle in the torus $\chi(T)$ which consists of the points $[A_{(a,b)}]$ with the fixed $a$ and arbitrary $b$.

The monopoles in $\M_{\alpha,\omega} (N^*)$ having $\phi = 0$ will be called \emph{reducible}, and all other monopoles will be called \emph{irreducible}. Denote by $\M^{\red}_{\alpha,\omega} (N^{*})$ and $\M^{\irr}_{\alpha,\omega} (N^{*})$ the reducible, respectively, irreducible loci of $\M_{\alpha,\omega} (N^{*})$. The restrictions of the map $R$ to $\M^{\irr}_{\alpha,\omega} (N^{*})$ and to $\M^{\red}_{\alpha,\omega} (N^{*})$ will be called, respectively,
\[
R^{\text{irr}}:   \M^{\irr}_{\alpha,\omega} (N^{*}) \to \chi(T)\quad\text{and}\quad
R^{\text{red}}: \M^{\red}_{\alpha,\omega} (N^{*}) \to \chi(T).
\]

The following structure theorem for the moduli space $\M_{\alpha,\omega} (N^*)$ is proved in Lim  \cite[Theorem 1.3]{Lim2} and \cite[Theorem 3]{Lim1}.

\begin{thm} \label{moduli space on N}
%The map $R$ gives a diffeomorphism between $\M^{\red}_{0,0} (N^{*})$ and the subset of $\chi (T)$ consisting of the points $[A_{(a,b)}]$ with $b = -1$. 
For any sufficiently small $\epsilon > 0$ there are arbitrarily small perturbations $\alpha $ and $\omega$ such that the following statements hold:  
\begin{itemize}
\item [(1)] The map $R^{\red}$ is a diffeomorphism onto its image, which is a circle contained in $U_{\epsilon}$
\item [(2)] The closure of $(R^{\irr})^{-1} (U_{\epsilon})$ in $\M_{\alpha,\omega} (N^*)$ is a smooth compact 1-manifold with boundary; its boundary points lie in $\M^{\red}_{\alpha,\omega} (N^*)\,\largecup\,R^{-1}(\partial \overline U_{\epsilon})$
\item [(3)] Any boundary point of the closure of $(R^{\irr})^{-1} (U_{\epsilon})$ that lies in $\M^{\red}_{\alpha,\omega} (N^*)$ has a neighborhood in $\M_{\alpha,\omega}(N^*)$ which is modeled on the zeroes of the map $\mathbb R \times \mathbb R^+ \to \mathbb R$ sending $(t,z)$ to $tz$, with $\mathbb R \times \{0\}$ corresponding to the reducibles
\item [(4)] The map $R$ is smooth on $(R^{\irr})^{-1} (U_{\epsilon})$
\item [(5)] Both $(R^{\irr})^{-1} (U_{\epsilon})$ and $\M^{\red}_{\alpha,\omega} (N^*)$ are canonically oriented by the choice of orientation on the real line $H^1 (N^*; \mathbb R)$.
\end{itemize}
\end{thm}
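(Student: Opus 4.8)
The plan is to follow Lim's analysis of finite-energy Seiberg--Witten monopoles on three-manifolds with a product end \cite{Lim2, Lim1}, specialized to the knot exterior $N^*$. First I would fix the analytic framework: configurations $(A,\phi)$ are taken in weighted Sobolev spaces on $N^*$ carrying a small exponential weight $e^{\delta s}$ along the end $[0,\infty)\times T$, so that the gauge group acts properly and the perturbed equations \eqref{E:sw1} define a Fredholm problem away from the asymptotic values at which the end operator fails to be invertible. Standard elliptic estimates together with the a priori $C^0$ bound on $\phi$ (as in \cite{nicolaescu:swbook}) show that every finite-energy monopole decays exponentially to a limit $(A_\infty,0)$ with $A_\infty$ a flat connection on $\det(E|_T)$; this is the content of the map $R$. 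The asymptotic operator degenerates precisely at $[A_0]$, where $\D_{A_0}(T)$ has kernel, which is why one works over the open set $U_\epsilon$ obtained by deleting a small disk around $[A_0]$. An Atiyah--Patodi--Singer index computation, with correction term the spectral flow across $[A_0]$, identifies the expected dimension of the irreducible locus as $1$.

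Next I would treat the two strata. On the reducible locus $\phi=0$, the equations \eqref{E:sw1} collapse to the affine equation $F_A=-\omega$ for an asymptotically flat $U(1)$ connection; since $H^1(N^*;\mathbb R)\cong H^1(N;\mathbb R)\cong\mathbb R$, the finite-energy solutions modulo $\G$ form a circle, and its image under $R$ is a circle in $\chi(T)$. Perturbing the closed form $\omega$, a transversality argument shows that for generic $\omega$ this circle is embedded and misses the bad point $[A_0]$, hence lies in $U_\epsilon$ for small $\epsilon$; this proves (1). For the irreducible locus I would invoke the Sard--Smale theorem: for generic $(\alpha,\omega)$ the linearized operator is surjective at every irreducible monopole, so $\M^{\irr}_{\alpha,\omega}(N^*)$ is a smooth $1$-manifold on which $R$ is smooth, which gives (4) and shows that $(R^{\irr})^{-1}(U_\epsilon)$ is an open $1$-manifold.

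The heart of the argument is compactness together with the structure of the boundary. Three-dimensional Seiberg--Witten theory has no bubbling, finite-energy solutions enjoy uniform $C^0$ and exponential-decay bounds, and there is no energy available to break along the end, so a sequence in $(R^{\irr})^{-1}(U_\epsilon)$ can degenerate only by having its asymptotic value converge to $\partial\overline U_\epsilon$ or by $\|\phi\|\to 0$, i.e.\ by converging to a reducible; hence the closure of $(R^{\irr})^{-1}(U_\epsilon)$ in $\M_{\alpha,\omega}(N^*)$ is a compact $1$-manifold with boundary lying in $\M^{\red}_{\alpha,\omega}(N^*)\cup R^{-1}(\partial\overline U_\epsilon)$, which is (2). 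To obtain (3) I would build an explicit Kuranishi model at a reducible limit $(A,0)$: the reducible direction contributes one variable $t$ coming from $H^1(N^*;\mathbb R)$, the norm of the spinor contributes a half-line variable $z\ge0$, and pairing the quadratic term $\tau(\phi)$ against the one-dimensional cokernel of the deformation operator at the reducible produces, to leading order, the single obstruction equation $t\,z=0$; the transversality already arranged for $\omega$ guarantees that the coefficient of $t\,z$ is nonzero, so the local model is exactly the zero set of $(t,z)\mapsto tz$ on $\mathbb R\times\mathbb R^+$. Finally, for (5), the moduli space is oriented by trivializing the determinant line of the linearized operator; for this deformation complex that line is canonically isomorphic to $\det H^0(N^*;\mathbb R)\otimes\det H^1(N^*;\mathbb R)$, and since $H^0$ is canonically oriented by the constants, an orientation of $H^1(N^*;\mathbb R)\cong\mathbb R$ orients both $(R^{\irr})^{-1}(U_\epsilon)$ and the reducible circle in a compatible way.

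I expect the main obstacle to be the simultaneous bookkeeping of transversality and the exactness of the local model: one must choose $(\alpha,\omega)$ so that the irreducible moduli space is cut out transversally, the reducible circle is embedded and avoids $[A_0]$, and the leading coefficient in the Kuranishi obstruction is nonvanishing, and then rule out any higher-order degeneration of the reducible boundary model. This is precisely the package of results established by Lim in \cite{Lim2, Lim1}, and I would carry out the argument by following that treatment.
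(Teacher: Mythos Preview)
Your proposal is aligned with the paper's treatment: the paper does not give its own proof of this structure theorem but simply cites Lim \cite[Theorem 1.3]{Lim2} and \cite[Theorem 3]{Lim1}, and your outline is precisely a sketch of Lim's argument (weighted Sobolev setup, Fredholm theory over $U_\epsilon$, Sard--Smale transversality, compactness without bubbling, Kuranishi model at reducibles, orientation via the determinant line). There is nothing to compare beyond noting that both you and the paper defer to Lim for the actual details.
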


\noindent
From now on, we will always work with $\epsilon$, $\omega$, and $\alpha$ that satisfy the conditions of Theorem \ref{moduli space on N}. There will be further conditions that will require $\epsilon$, $\omega$, and $\alpha$ to be sufficiently small. These conditions will be summarized in Remark \ref{rmk: parameter}.

\medskip

\begin{figure}[ht!]
\labellist
\small\hair 2pt
\pinlabel {$l$} [ ] at 3 204
\pinlabel {$m$} [ ] at 215 3
\pinlabel {$1-\delta$} [ ] at 390 0
\pinlabel {$U_\epsilon$} [ ] at 280 210
\endlabellist
\centering
\includegraphics[scale=0.5]{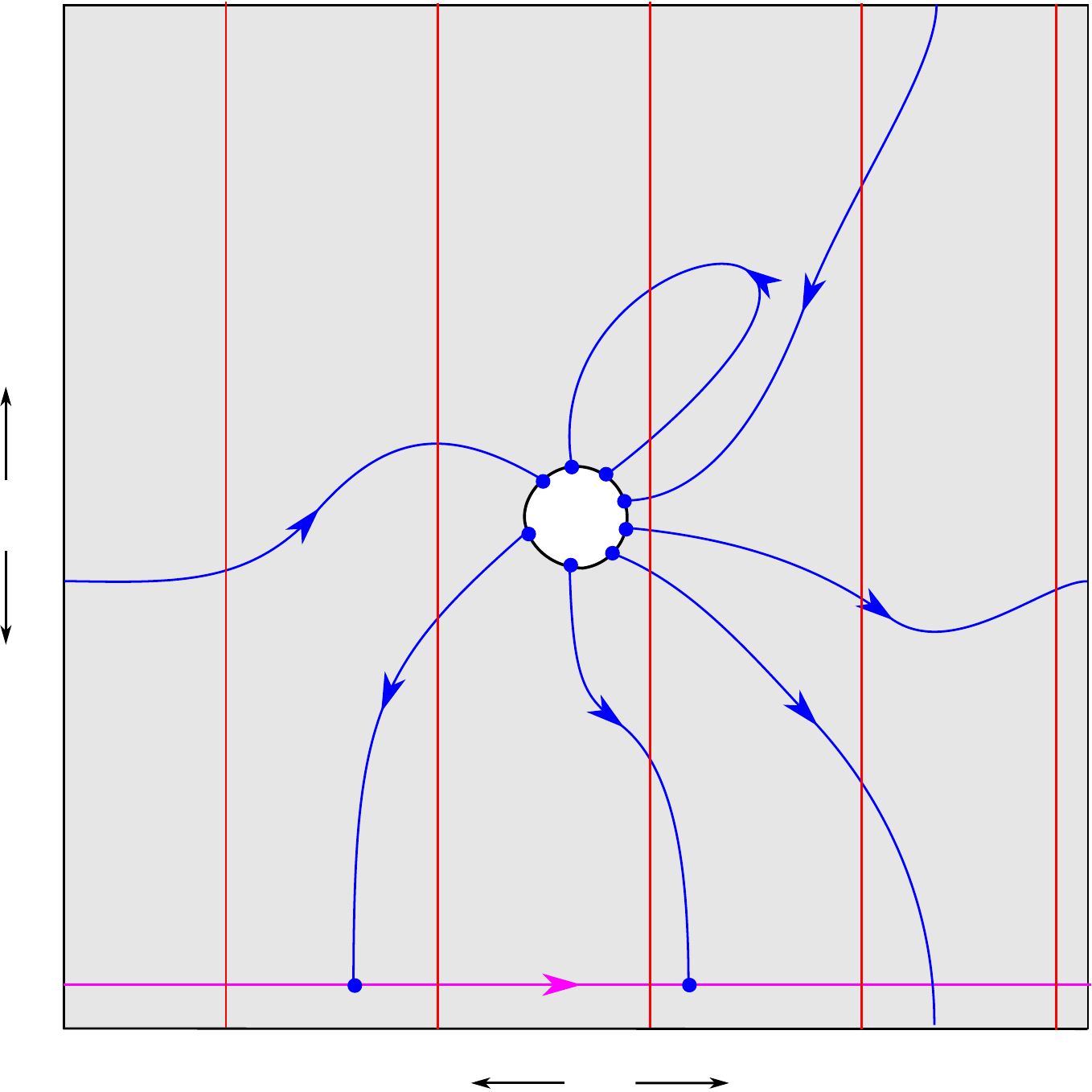}
\caption{The image of $\M_{\alpha,\omega} (N^*)$ in $U_{\epsilon}$}
\label{F:torus}
\end{figure}

One can say more about the map $R^{\red}$, see discussion after Theorem 3 in \cite{Lim2} and Section 7 of \cite{Lim1}. The reducible monopoles are given by the equation $F_A + \omega = 0$. If $\omega = 0$, these are just flat connections on $N^*$, which are mapped to the circle $b = -1$ in the torus $\chi(T)$. The same is true for any $\omega$ with the vanishing cohomology class in $H^2_c (N^*; \mathbb R) = H^2 (N,\p N; \mathbb R) = \mathbb R$. In general, it follows by a direct calculation that 
\[
\ell (A) = -1 +i  \int_F \omega,
\]
where $F$ is a Seifert surface for $\ell \subset N$. In particular, the image of $R^{\red}: \M^{\red}_{\alpha,\omega} (N^*) \to \chi(T)$ is a circle  given by the equation $b = -1 + c(\omega)$, where $c(\omega) \in \mathbb R$ can be made arbitrarily small by choosing small $\omega$. Such a circle is indicated near the bottom of Figure \ref{F:torus}.

%\begin{lem}\label{moduli space on N}
%(1) $R^{\text{red}}$ gives a diffeomorphism between $\mathcal{M}^{\text{red}}(N^{*})$ and the subset $\{[A_{*,-1}]\}$ of $\mathcal{M}(T)$. 

%(2) For $a\in \mathbb{R}/2\mathbb{Z}$, let $([(\theta(N^{*},a),0)]=(R^{\text{red}})^{-1}([A_{a,-1}])$. Then as $a$ goes around the circle $\mathbb{R}/2\mathbb{Z}$, the spectral flow of the coupled Dirac operator $\slashed{D}_{\theta(N^{*},a)}$ (over $N^{*}$) equals zero. (One needs to make sense of the spectral of this operator, see \cite{Lim1} for related discussion.)
%\end{lem}

% For now, we put the following assumption. In general case, it can be achieved by introducing a perturbation.
%\begin{assum} The closure of  
%$(R^{\text{irr}})^{-1}(U_{\epsilon})$ in $\mathcal{M}(N^{*})$ is a 1-manifold with boundary, the boundary point lies in $R^{-1}(\partial U_{\epsilon})\cup \mathcal{M}^{\text{red}}(N^{*})$.
%\end{assum}

%%%%%%%%%%%%%%%%%%%%%%%%%%%%%%%%%%%%%%%%%%%%%%%%%

\subsection{Monopoles on orbifolds} \label{section: monopole orbifold} 
Recall that we denoted by $Y^o$ the orbifold with the underlying space $Y$, the singular set $K$, and the cone angle $2\pi/n$ along $K$, and that we equipped the knot exterior $N=Y^{o}-D^{o}(K)$ with a metric $g(N)$ which restricts to a flat metric on the boundary $T$. We will further assume that, with respect to this flat metric, the meridian $m$ and the longitude $\ell$ are orthogonal geodesics of length $u$ and $v$, respectively. We will equip $Y^o$ with an orbifold metric obtained by gluing $g(N)$ to an orbifold metric $g^{o}_{u,v}$ on $D^{o}(K)$. The latter metric is defined as follows.
\begin{defi}\label{defi: bullet-type metrics}
Let $D^{2}\times S^{1}$ be a solid torus with the polar coordinates $(r,\theta,\xi)$, where $0 \le r \le \pi/2$ and $\theta,\xi\in \mathbb{R}/2\pi$. For any $u, v > 0$, define the metric $g_{u,v}$ on $D^{2}\times S^{1}$ by the formula
$$
\frac{v^{2}}{4\pi^{2}}\;d\xi\otimes d\xi+\frac{u^{2}}{4\pi^{2}}\,\left(dr\otimes dr+ h(r)^{2} d\theta\otimes d\theta\right),     
$$ 
where $h: \left[0,\pi/2\right]\rightarrow \mathbb{R}$ is a smooth function such that
\begin{itemize}
    \item $h(r)=\sin r$\, when\, $0 \le r \le \pi/6$,
    \item $h(r)=1$\, when\, $5\pi/12 \le r \le \pi/2$, and 
    \item $h''(r) < 0$\, when\, $\pi/6 \le r\le 5\pi/12$.
\end{itemize}
We call $g_{u,v}$ the {\it smooth bullet-type metric}. With respect to this metric, the boundary of $D^2$ has length $u$, and the circle factor has length $v$. We define the {\it orbifold bullet-type metric} $g^{o}_{u,v}$ as the quotient metric of $g_{nu,v}$ under the finite-order isometry $\iota$ of $D^{2}\times S^{1}$ taking $(r,\theta,\xi)$ to $(r,\theta+2\pi/n,\xi)$. 
Note that both $g_{u,v}$ and $g^{o}_{u,v}$ are flat near the boundary and have positive scalar curvature $\scal(g_{u,v})$ and $\scal(g^{o}_{u,v})$ elsewhere.
\end{defi}

We will need to perturb the Seiberg--Witten equations near $K$. To do this, we let $\nu$ be the 2-form on $D^{2} \times S^{1}$ obtained by pulling back an $i\mathbb R$--valued 2-form on the unit disk $D^{2}$ which is supported in the region $r\leq \pi/6$ and integrates to $i/2$. We define the orbifold 2-form $\nu^{o}$ to be the quotient of the form $n\cdot \nu$ by the action of the isometry $\iota$.  The following lemma is straightforward.

\begin{lem}\label{lem: curvature bound}
There exists a constant $\delta_{0}>0$ such that the inequalities
\begin{equation}\label{eq: curvature control}
|\delta_{0}\nu(x)|\leq \scal(g_{u,v})(x)\quad \text{and} \quad |\delta_{0}\nu^{o}(y)|\leq \scal(g^{o}_{u,v})(y)
\end{equation}
hold for all $x\in D(K)$ and $y\in D^{o}(K)$.
\end{lem}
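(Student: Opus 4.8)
The plan is to establish Lemma \ref{lem: curvature bound} by a direct scaling and compactness argument, exploiting the explicit description of the metrics $g_{u,v}$ and $g^o_{u,v}$ and the fact that $\nu$, $\nu^o$ are fixed compactly supported forms. First I would observe that the pointwise norm $|\nu(x)|$ is measured with respect to $g_{u,v}$, so I need to track how both sides of the inequality scale in the parameters $u$ and $v$. Writing $g_{u,v}$ in the warped-product form of Definition \ref{defi: bullet-type metrics}, the form $\nu$ is pulled back from the $D^2$ factor and supported in $r\le\pi/6$, where the metric on that factor is $\tfrac{u^2}{4\pi^2}(dr^2+\sin^2 r\, d\theta^2)$. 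A two-form in these coordinates has pointwise norm that scales like $u^{-2}$ (two factors of the cometric, each of size $u^{-2}$, but $\nu$ itself is $u$-independent as a form), while the scalar curvature of $g_{u,v}$ in the region $\pi/6\le r\le 5\pi/12$ — where $h''<0$ forces $\scal>0$ — scales like $u^{-2}$ as well, since the $d\xi$ direction is flat and contributes nothing to the curvature of the Gauss-curvature-type term coming from the disk factor. Thus the ratio $|\nu|/\scal(g_{u,v})$ is in fact \emph{independent of $u$ and $v$} on the overlap of the relevant regions, which is the crucial point.

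Next, the two supports need to be compared. The form $\nu$ lives in $r\le\pi/6$, where $h(r)=\sin r$ and the metric is genuinely the round/flat cone metric, so $\scal(g_{u,v})$ may vanish there (it is flat near $r=0$ in the appropriate sense once the cone is smoothed). This looks like a problem, but it is not: the inequality only needs $|\delta_0\nu(x)|\le\scal(g_{u,v})(x)$, and where $\nu$ is supported we must make sure $\scal$ does not vanish faster than $\nu$. Here I would either (a) arrange, as is implicit in the construction, that $\nu$ is supported in a subregion where $\scal(g_{u,v})>0$ — i.e. shrink the support of the pulled-back disk form into $\pi/12\le r\le\pi/6$ if necessary, adjusting the ``integrates to $i/2$'' normalization accordingly — or (b) note that the relevant statement is really needed only on the region where the curvature-control inequality \eqref{eq: curvature control} will be applied in later sections, which is away from $r=0$. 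The cleanest route is (a): redefine $\nu$ (harmlessly, since only its cohomology class and integral matter downstream) so its support sits inside the region $\pi/6\le r\le 5\pi/12$ where $h''<0$ guarantees $\scal(g_{u,v})>0$ pointwise. On that compact region both $|\nu|$ and $\scal(g_{u,v})$, after the scaling normalization, are continuous positive functions of a point in a fixed compact set (the parameters $u,v$ having scaled out), so their ratio is bounded, yielding a $\delta_0$ that works uniformly in $u,v$.

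For the orbifold statement, I would simply push the manifold statement through the quotient by the order-$n$ isometry $\iota$. By construction $g^o_{u,v}$ is the quotient of $g_{nu,v}$ and $\nu^o$ is the quotient of $n\cdot\nu$; since $\iota$ is an isometry, pointwise norms and scalar curvature are preserved under the quotient map $D^2\times S^1\to D^o(K)$. Thus $|\delta_0\nu^o(y)| = n\cdot|\delta_0\nu(\tilde y)|$ and $\scal(g^o_{u,v})(y) = \scal(g_{nu,v})(\tilde y)$ for any lift $\tilde y$ of $y$. Applying the manifold inequality with parameters $(nu,v)$ in place of $(u,v)$ gives $|\delta_0\nu(\tilde y)|\le\scal(g_{nu,v})(\tilde y)$, hence $|\delta_0\nu^o(y)|\le n\cdot\scal(g^o_{u,v})(y)$; absorbing the factor $n$ into a smaller choice of $\delta_0$ (or using the $u$-independence to see the original $\delta_0$ already handles the extra factor up to a harmless constant) finishes the argument. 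One then takes the minimum of the two constants.

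The main obstacle I anticipate is the behavior near the cone point $r=0$: one must be careful that the perturbation form $\nu$ is not supported where the smoothed metric has vanishing (or rapidly vanishing) scalar curvature, since no choice of finite $\delta_0$ would rescue the inequality there. This is why the honest version of the argument requires either specifying that $\nu$'s support lies in the region $\pi/6\le r\le 5\pi/12$ (where $h''<0$ strictly), or noting that Definition \ref{defi: bullet-type metrics} already places the relevant positive-curvature region and the support of $\nu$ compatibly. Once the supports are arranged correctly, the rest is the routine scaling/compactness bookkeeping sketched above, and the word ``straightforward'' in the lemma statement is justified.
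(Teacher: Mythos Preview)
Your main worry---that the scalar curvature might vanish on the support of $\nu$ near $r=0$---is based on a miscalculation. In the region $0\le r\le \pi/6$ one has $h(r)=\sin r$, so the disk factor carries the metric $\tfrac{u^2}{4\pi^2}(dr^2+\sin^2 r\,d\theta^2)$, which is a rescaled \emph{round sphere} metric, not a flat one. Its Gaussian curvature is the positive constant $4\pi^2/u^2$ (from $-h''/h=1$ before rescaling), and since the $S^1$ factor is flat the three-dimensional scalar curvature is the positive constant $8\pi^2/u^2$ throughout $\{r\le\pi/6\}$. The paper already records this at the end of Definition~\ref{defi: bullet-type metrics}: the metrics are flat \emph{near the boundary} (where $h\equiv 1$) and have positive scalar curvature elsewhere. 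So there is no need to move the support of $\nu$ into $\{\pi/6\le r\le 5\pi/12\}$; the support already sits in the constant-positive-curvature cap.

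With this corrected, the lemma is indeed immediate: $|\nu|_{g_{u,v}}$ is a continuous function on the compact solid torus, supported in a region where $\scal(g_{u,v})$ is a positive constant, so one simply takes $\delta_0$ smaller than the infimum of $\scal(g_{u,v})/|\nu|$ over $\operatorname{supp}\nu$. The same reasoning applies verbatim to $\nu^o$ and $g^o_{u,v}$. Your scaling analysis in $u,v$ is correct but superfluous here, since $u$ and $v$ are fixed once the flat metric on $T=\partial N$ is chosen; the paper only asks for a single $\delta_0$, not one uniform in all $u,v$. This is why the paper offers no proof beyond the word ``straightforward.''
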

\noindent
In what follows, we will choose $\epsilon > 0$ (the radius of the disk around the singularity in $\chi(T^{2})$) smaller than the constant $\delta_{0} > 0$ of Lemma \ref{lem: curvature bound}.

Denote by $g(Y^{o})$ the orbifold metric on $Y^o = N\,\largecup_T\, D^o (K)$ obtained by gluing together the metrics $g(N)$ and $g^{o}_{u,v}$. Recall that the underlying space of $Y^{o}$ is an integral homology 3-sphere. Therefore, the orbifold $\spinc$ structures $\s_{k}$ on $Y^o$ can be canonically parametrized by integers $0\leq k\leq n-1$; see Baldridge \cite[Theorem 7 and Theorem 4]{Baldridge}. Given an orbifold $\spinc$ structure $\s_{k}$, consider the orbifold Seiberg--Witten equations as in Baldridge \cite[Section 2.4]{Baldridge},
\begin{equation}\label{E:sw3}
F_A  + \omega^o = \tau(\phi),\quad \D_A (\phi) = 0,
\end{equation}

\smallskip\noindent
where $\omega^o$ is a closed orbifold 2--form on $Y^o$ with coefficients in $i \mathbb R$. Equations \eqref{E:sw3} give rise to the orbifold Seiberg--Witten moduli space, which will be called $\M (Y^o,\s_k,\, g(Y^o),\omega^o)$. This moduli space can be recovered from $\M_{\alpha,\omega} (N^*)$ by using the right choice of metric and perturbation on $Y^o$. The metric will be the metric $g(Y^o)_L$ on the orbifold
\[
Y^o = N\,\largecup_T\,([-L,L]\times T)\,\largecup_T\, D^o (K)
\]
obtained from $g(Y^o)$ by `neck stretching', and the perturbation 2--form $\omega^o_{\delta} = \omega + \delta\cdot \nu^o$ being the sum of the 2--form $\omega$ supported in $\Int(N)$, and the orbifold 2--form $\delta \cdot \nu^o$ supported in $\Int(D^{o}(K))$, which integrates to $i \delta/2$ on each meridional disk.

\begin{pro}\label{gluing2} 
For a generic $\delta\in (\epsilon,\delta_{0})$, the map $R^{\irr}$ is transversal to $\S_{2k/n + 1 - \delta}$ for all $k$. Moreover, for all sufficiently large $L  > 0$, the zero-dimensional manifolds 
$$
\M^{\irr} (Y^o,\s_k,\, g(Y^o)_L,\,\omega^o_\delta - d\alpha)\quad \text{and}\quad  (R^{\irr})^{-1} (\S_{2k/n + 1 - \delta})
$$
are orientation preserving diffeomorphic.
\end{pro}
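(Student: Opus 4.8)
The plan is to prove Proposition \ref{gluing2} by a neck‑stretching and gluing argument. As $L\to\infty$ the stretched orbifold $Y^o = N\,\cup_T([-L,L]\times T)\,\cup_T D^o(K)$ degenerates to the disjoint union of the product‑end manifold $N^*$ and the product‑end orbifold $D^o(K)^* = D^o(K)\,\cup_T([0,\infty)\times T)$, and a monopole on $Y^o_L$ for large $L$ is, up to exponentially small error on the neck, a glued pair consisting of a monopole on $N^*$ and one on $D^o(K)^*$ whose asymptotic flat connections on $T$ agree. Since the $D^o(K)^*$–side will turn out to contribute only a rigid circle of reducibles mapping diffeomorphically onto $\S_{2k/n+1-\delta}\subset\chi(T)$, this identifies $\M^{\irr}(Y^o,\s_k,g(Y^o)_L,\omega^o_\delta-d\alpha)$ with $(R^{\irr})^{-1}(\S_{2k/n+1-\delta})$.

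The first step is the analysis of $D^o(K)^*$ with perturbation $\delta\nu^o$ (note that $\omega$ and $d\alpha$ are supported in $\Int(N)$, so the restriction of $\omega^o_\delta-d\alpha$ to $D^o(K)$ is exactly $\delta\nu^o$). Working $\Z/n$–equivariantly on the smooth cover $D^2\times S^1$ with the bullet‑type metric $g_{nu,v}$, the Weitzenböck formula applied to a finite‑energy monopole $(A,\phi)$, together with Lemma \ref{lem: curvature bound} and the hypothesis $\delta<\delta_0$, forces $\phi\equiv 0$ on $D^o(K)$; since the metric is flat on the end and $[A|_T]\in U_\epsilon$ (so $\D_{A|_T}(T)$ is invertible), $\phi\equiv 0$ everywhere, and $\M_{\delta\nu^o}(D^o(K)^*)$ is entirely reducible. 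A reducible solves $F_A+\delta\nu^o=0$; because $H^1(D^o(K)^*;\mathbb R)\to H^1(T;\mathbb R)$ is injective and $\int\nu^o=i/2$ over an orbifold meridional disk, the restriction map to $\chi(T)$ is a diffeomorphism onto a single circle $\{m(A)=c\}$, and a direct computation of $-2i\oint_m(A-A_0)$ — in which Baldridge's orbifold $\spinc$ structure $\s_k$ contributes the meridional defect $2k/n$, the ambient spin structure the basepoint $A_{(1,1)}$, and $\delta\nu^o$ the term $-\delta$ — identifies this circle as $\S_{2k/n+1-\delta}$. Along this circle the coupled Dirac operator $\D_A$ on $D^o(K)^*$ has no $L^2$–kernel (again by positive scalar curvature and invertibility of $\D_{A|_T}(T)$), so the reducible family is cut out transversally and carries no gluing obstruction beyond the matching of asymptotic values.

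For the transversality statement, recall from Theorem \ref{moduli space on N}(2),(4) that $(R^{\irr})^{-1}(U_\epsilon)$ is a smooth $1$–manifold on which $R^{\irr}$ is smooth, canonically oriented (Theorem \ref{moduli space on N}(5)) by the chosen orientation of $H^1(N^*;\mathbb R)$. For $\delta\in(\epsilon,\delta_0)$ the constraints already in force (see Remark \ref{rmk: parameter}) keep each circle $\S_{2k/n+1-\delta}$, $k=0,\dots,n-1$, inside $U_\epsilon$, and transversality of $R^{\irr}$ to $\S_{2k/n+1-\delta}$ is equivalent to $2k/n+1-\delta$ being a regular value of the smooth map $m\circ R^{\irr}\colon (R^{\irr})^{-1}(U_\epsilon)\to\mathbb R/2\mathbb Z$; by Sard's theorem this holds for all $k$ simultaneously once $\delta$ avoids a measure‑zero set. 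Fix such a generic $\delta$. Then $(R^{\irr})^{-1}(\S_{2k/n+1-\delta})$ is a finite set of signed points, the signs coming from the orientation of $(R^{\irr})^{-1}(U_\epsilon)$ and the coorientation of $\S_{2k/n+1-\delta}$ in $\chi(T)$.

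It remains to carry out the gluing, and this is where I expect the main work to lie. After the substitution $A\mapsto A+\alpha$, which converts the $\alpha$–perturbed equations \eqref{E:sw1} on $N^*$ into equations of the form \eqref{E:sw3} with perturbation $\omega-d\alpha$ on $N$, the moduli space $\M^{\irr}(Y^o,\s_k,g(Y^o)_L,\omega^o_\delta-d\alpha)$ is compact for each $L$, with energy and curvature controlled by the perturbations and Lemma \ref{lem: curvature bound}; a sequence with $L\to\infty$ therefore subconverges, with no loss of energy, to a pair of finite‑energy monopoles on $N^*$ and on $D^o(K)^*$ with matching asymptotic values — that is, to a point of $(R^{\irr})^{-1}(\S_{2k/n+1-\delta})$ paired with the unique reducible on $D^o(K)^*$ that matches it. Conversely, the standard Seiberg–Witten gluing construction (patching the approximate solution and solving by the contraction‑mapping principle, with the linearized operator invertible because $R^{\irr}$ is transverse to $\S_{2k/n+1-\delta}$ and the $D^o(K)^*$–deformation operator has no cokernel) produces, for all $L$ large, a genuine monopole on $Y^o_L$ from each such pair and shows that the resulting map is a diffeomorphism of $0$–manifolds; orientations are matched by the usual additivity of determinant lines under gluing, using that the determinant line of the (reducible, essentially abelian) deformation operator on $D^o(K)^*$ is canonically trivialized compatibly with the coorientation of $\S_{2k/n+1-\delta}$. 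The hard part is the neck‑stretching package itself — uniform exponential decay on the neck, compactness of the stretched moduli spaces, the claim that no monopole on $Y^o_L$ escapes to the reducible stratum or fails to converge to a split limit, and the surjectivity of the gluing map with uniformly bounded right inverse. This is by now standard technology in the style of Lim \cite{Lim1,Lim2} and Kronheimer–Mrowka \cite{kronheimer-mrowka:monopole}, but it needs care because of the orbifold singularity along $K$, which is handled by working equivariantly on the smooth cover $D^2\times S^1$, and because of the asymptotically reducible product ends; the orientation comparison is a further, more routine, bookkeeping task.
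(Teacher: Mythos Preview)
Your proposal is correct and follows essentially the same approach as the paper. The paper's own proof is very terse: it obtains transversality from Theorem \ref{moduli space on N} and Sard's theorem, and for the identification of moduli spaces it simply invokes the gluing theorems \cite[Theorem 1.4]{Lim2} and \cite[Theorem 4]{Lim1}, noting that the orbifold points make no difference and that the key input is the curvature bound \eqref{eq: curvature control}, which forces the $\delta\nu^o$--perturbed equations on the cylindrical-end orbifold $D^o(K)^*$ to have no finite-energy irreducibles; your write-up unpacks exactly this argument in detail.
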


\begin{proof}
The transversality assertion follows from Theorem \ref{moduli space on N} and Sard's Theorem. The identification between $\M^{\irr} (Y^o,\s_k,\, g(Y^o)_L,\,\omega^o_\delta - d\alpha)$ and  $(R^{\irr})^{-1} (\S_{2k/n + 1 - \delta})$
is an orbifold version of \cite[Theorem 1.4]{Lim2} and \cite[Theorem 4]{Lim1}, which is essentially a gluing argument along a torus, hence the presence of orbifold points makes no difference.  The key ingredient is the inequality \eqref{eq: curvature control}, which implies that the $\delta\cdot \nu^o$--perturbed Seiberg--Witten equations on the cylindrical-end orbifold $((-\infty,0]\times T)\cup_{T}D^{o}(K)$ have no irreducible solutions with finite energy.
\end{proof}

%\begin{rmk}\label{R:delta}
%For even $n$, we are forced to work with $\delta > 0$ to ensure that the circle $\S_{-\delta}$ fits into $U_{\epsilon}$, at least for sufficiently small $\epsilon > 0$. % Note that Proposition \ref{gluing2} reduces to Proposition \ref{gluing1} for the $\spinc$ structure $\s_0$. 
%\end{rmk}

 An argument similar to that in \cite[page 637]{Lim1} then shows that the oriented count of points in $(R^{\irr})^{-1} (\S_{2k/n+1-\delta})$ is given by the intersection number of $R (\M^{\irr}_{\alpha,\omega} (N^*))$ with the circle $\S_{2k/n+1-\delta}$ in the torus $\chi(T)$, where the torus $\chi(T)$ is oriented by $\p/\p a \wedge \p/\p b$, and the circle $\S_{2k/n+1-\delta}$ is oriented by $\p/\p b$ (again, the presence of orbifold points makes no difference). By combining this observation with Proposition \ref{gluing2}, we obtain the following result. 

\begin{cor}\label{C:orbifold}
For a generic $\delta\in (\epsilon,\delta_{0})$ and for all $L>0$ sufficiently large, the oriented count of points in the moduli space $\M^{\irr} (Y^o,\s_k,\,g(Y^o)_L,\omega^o_{\delta} - d\alpha)$ equals the intersection number of $R (\M^{\irr}_{\alpha,\omega} (N^*))$ with the circle $\S_{2k/n+1-\delta}$ in the torus $\chi(T)$.
\end{cor}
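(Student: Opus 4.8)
The plan is to combine the two inputs already available in the excerpt: Proposition~\ref{gluing2}, which identifies the orbifold irreducible moduli space $\M^{\irr}(Y^o,\s_k,g(Y^o)_L,\omega^o_\delta - d\alpha)$ with the preimage $(R^{\irr})^{-1}(\S_{2k/n+1-\delta})$ as oriented $0$-manifolds, and the cited argument of Lim \cite[page 637]{Lim1} (adapted verbatim, since orbifold points play no role away from the torus $T$) identifying the oriented count of $(R^{\irr})^{-1}(\S_{2k/n+1-\delta})$ with the intersection number $R(\M^{\irr}_{\alpha,\omega}(N^*)) \cdot \S_{2k/n+1-\delta}$ in $\chi(T)$. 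The corollary is then a one-line consequence: chaining these two orientation-preserving identifications gives that the oriented point count of $\M^{\irr}(Y^o,\s_k,g(Y^o)_L,\omega^o_\delta - d\alpha)$ equals the stated intersection number.

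First I would invoke Proposition~\ref{gluing2} to fix, for a generic $\delta \in (\epsilon,\delta_0)$, a large enough $L$ so that the orbifold moduli space is cut out transversally and is an oriented $0$-manifold orientation-preservingly diffeomorphic to $(R^{\irr})^{-1}(\S_{2k/n+1-\delta})$; here I would note that the transversality of $R^{\irr}$ to $\S_{2k/n+1-\delta}$, guaranteed by the same proposition via Sard's theorem, is exactly what makes the latter preimage a compact oriented $0$-manifold in the first place (using that the closure of $(R^{\irr})^{-1}(U_\epsilon)$ is a compact $1$-manifold with boundary by Theorem~\ref{moduli space on N}(2), and that $\S_{2k/n+1-\delta}$ lies in $U_\epsilon$ for $\delta > \epsilon$, so no boundary points of that $1$-manifold can escape to the reducibles or to $\partial\overline U_\epsilon$ along the relevant slice). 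Second I would recall the content of Lim's argument: since $R$ is smooth on $(R^{\irr})^{-1}(U_\epsilon)$ by Theorem~\ref{moduli space on N}(4) and the relevant piece of $\M^{\irr}_{\alpha,\omega}(N^*)$ is a compact oriented $1$-manifold-with-boundary mapping into the torus $\chi(T)$, the signed count of its intersection with the codimension-one submanifold $\S_{2k/n+1-\delta}$ is a homological intersection number, computed with the orientations specified in the excerpt: $\chi(T)$ oriented by $\partial/\partial a \wedge \partial/\partial b$ and $\S_{2k/n+1-\delta}$ oriented by $\partial/\partial b$. Third I would simply compose the two bijections-with-signs and conclude.

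The only genuine point requiring care — and the step I expect to be the main obstacle to writing honestly — is the compatibility of orientations across the two identifications, i.e.\ verifying that Lim's sign conventions for the intersection number $R(\M^{\irr}_{\alpha,\omega}(N^*)) \cdot \S_{2k/n+1-\delta}$ match the orientation on $(R^{\irr})^{-1}(\S_{2k/n+1-\delta})$ induced from $H^1(N^*;\R)$ as in Theorem~\ref{moduli space on N}(5), which in turn must match the orientation on $\M^{\irr}(Y^o,\s_k,\dots)$ used in the gluing identification of Proposition~\ref{gluing2}. Everything here is already pinned down by the choice of orientation on $H^1(N^*;\R) = H^1(Y^o;\R)$ and the functoriality of the gluing map, so the claim is that the signs propagate consistently; this is precisely why the excerpt phrases it as ``an argument similar to that in \cite[page 637]{Lim1}'' and ``the presence of orbifold points makes no difference.'' In the write-up I would therefore not reprove Lim's intersection-number argument but cite it, emphasize that the orbifold locus sits inside $D^o(K)$ and is disjoint from the torus $T$ where the intersection-theoretic bookkeeping takes place, and note that the curvature inequality \eqref{eq: curvature control} (used already in Proposition~\ref{gluing2}) rules out stray irreducible ends on the cylindrical-end orbifold, so no new boundary contributions to the $1$-manifold arise from the orbifold side.

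\begin{proof}
Fix a generic $\delta \in (\epsilon,\delta_0)$ as in Proposition~\ref{gluing2}, so that $R^{\irr}$ is transversal to $\S_{2k/n+1-\delta}$; since $\delta > \epsilon$, the circle $\S_{2k/n+1-\delta}$ lies in $U_\epsilon$. By Theorem~\ref{moduli space on N}(2),(4), the closure of $(R^{\irr})^{-1}(U_\epsilon)$ is a compact oriented $1$-manifold with boundary on which $R$ is smooth, and its boundary maps into $\M^{\red}_{\alpha,\omega}(N^*) \cup R^{-1}(\partial\overline U_\epsilon)$, which is disjoint from $R^{-1}(\S_{2k/n+1-\delta})$. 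Hence $(R^{\irr})^{-1}(\S_{2k/n+1-\delta})$ is a compact oriented $0$-manifold, and by the standard argument of Lim \cite[page 637]{Lim1} — which only involves the smooth locus $(R^{\irr})^{-1}(U_\epsilon)$ near the torus $T$, where the orbifold points of $Y^o$ play no role — its oriented count equals the intersection number of $R(\M^{\irr}_{\alpha,\omega}(N^*))$ with $\S_{2k/n+1-\delta}$ in $\chi(T)$, computed with $\chi(T)$ oriented by $\partial/\partial a \wedge \partial/\partial b$ and $\S_{2k/n+1-\delta}$ oriented by $\partial/\partial b$. On the other hand, for all sufficiently large $L > 0$, Proposition~\ref{gluing2} gives an orientation preserving diffeomorphism between $\M^{\irr}(Y^o,\s_k,g(Y^o)_L,\omega^o_\delta - d\alpha)$ and $(R^{\irr})^{-1}(\S_{2k/n+1-\delta})$, both oriented compatibly with the chosen orientation of $H^1(N^*;\R) = H^1(Y^o;\R)$. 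Combining the two identifications yields the claim.
\end{proof}
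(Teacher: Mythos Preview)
Your proposal is correct and follows exactly the paper's approach: the corollary is presented there as an immediate consequence of combining Proposition~\ref{gluing2} with Lim's intersection-number argument from the paragraph preceding the corollary, and your write-up simply fleshes out those two steps. One minor slip to fix: the parenthetical identification $H^1(N^*;\mathbb R) = H^1(Y^o;\mathbb R)$ is incorrect (the knot exterior $N^*$ has $H^1 \cong \mathbb R$, while the underlying space of $Y^o$ is an integral homology sphere), so just cite $H^1(N^*;\mathbb R)$ as in Theorem~\ref{moduli space on N}(5).
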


%%%%%%%%%%%%%%%%%%%%%%%%%%%%%%%%%%%%%%%%%%%%%%%%%%%

\subsection{Monopoles on $Y$}
Now we study the perturbed Seiberg--Witten equations on the homology sphere $Y$. Let us consider the splitting $Y = N\,\largecup_T\, D(K)$ with a metric $g(Y)$ which restricts to the metric $g(N)$ on $N$ and a smooth bullet-type metric $g_{u,v}$ on $D(K) = S^1 \times D^2$ (see Definition \ref{defi: bullet-type metrics}). For any constant $L > 0$, equip the manifold 
\[
Y = N\,\largecup_T\,([-L,L]\times T)\;\largecup_T\, D(K)
\]
with the metric $g(Y)_L$ obtained from $g(Y)$ by `neck stretching'. In addition, let $\omega_{\delta} = \omega + \delta\cdot \nu$ be the sum of the form $\omega$ supported on $N$  and the 2--form $\delta\cdot \nu$ supported on $D(K)$ (as defined in Section \ref{section: monopole orbifold}), which integrates to $i \delta/2$ on each meridional disk.
Then the $(\omega_{\delta},\alpha)$--perturbed Seiberg-Witten equations, as studied by Lim \cite{Lim1}, are of the form
\begin{equation}\label{E:sw2}
F_A + \omega_{\delta} = \tau(\phi), \quad \D_A (\phi) + \alpha\cdot \phi = 0.
\end{equation}
The corresponding moduli space $\M_{\alpha,\omega_{\delta}}(Y,g(Y))$  contains exactly one reducible solution.  We wish to compare $\M_{\alpha,\omega_{\delta}}(Y,g(Y))$ with the moduli space $\M(Y,g(Y),\beta)$ (which was used in \cite{MRS} to define $\lsw$) given by the equations
\begin{equation}\label{E:sw4}
F_A = \tau(\phi) + d\beta,\quad \D_A (\phi) = 0.
\end{equation}
To this end, observe that any closed 2-form $\omega_{\delta}$ is exact on the homology sphere $Y$, hence can be written in the form $\omega_{\delta} = d\gamma_{\delta}$, where $\gamma_{\delta}$ is a 1-form with coefficients in $i\mathbb R$. 

\begin{pro}\label{P:diff1}
The choice of $\beta_{\delta} = \alpha - \gamma_{\delta}$ establishes an orientation preserving diffeomorphism between the moduli spaces $\M_{\alpha,\omega_{\delta}} (Y,g(Y))$ and $\M(Y,g(Y),\beta_{\delta})$.
\end{pro}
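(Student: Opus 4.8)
The plan is to realize the claimed diffeomorphism as an explicit affine translation of the configuration space and then to check that it intertwines all the relevant structure. Since $Y$ is an integral homology sphere, $H^2(Y;\mathbb R)=0$, so the closed $i\mathbb R$-valued $2$-form $\omega_\delta$ is exact; write $\omega_\delta=d\gamma_\delta$ as in the statement. (The $1$-form $\gamma_\delta$ is not unique, but $d\beta_\delta=d\alpha-\omega_\delta$ is, and only $d\beta_\delta$ enters \eqref{E:sw4}, so $\M(Y,g(Y),\beta_\delta)$ is well defined.) Consider the map
\[
\Phi\colon(A,\phi)\longmapsto(A+\alpha,\phi)
\]
on the configuration space of pairs $(A,\phi)$ with $A$ a unitary connection in $\det E$ and $\phi$ a spinor. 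Adding the fixed $1$-form $\alpha$ to $A$ has exactly two effects: it shifts the coupled Dirac operator by Clifford multiplication, $\D_{A+\alpha}\psi=\D_A\psi+\alpha\cdot\psi$ for every spinor $\psi$, and it shifts the curvature, $F_{A+\alpha}=F_A+d\alpha$. (This is the normalization forced by the statement: it is exactly the one for which $\beta_\delta=\alpha-\gamma_\delta$ comes out of the computation below.)

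First I would verify that $\Phi$ carries the solution set of \eqref{E:sw2} bijectively onto that of \eqref{E:sw4} with $\beta=\beta_\delta$. If $(A,\phi)$ solves \eqref{E:sw2}, then $\D_{A+\alpha}\phi=\D_A\phi+\alpha\cdot\phi=0$, and
\[
F_{A+\alpha}=F_A+d\alpha=\tau(\phi)-\omega_\delta+d\alpha=\tau(\phi)+d(\alpha-\gamma_\delta)=\tau(\phi)+d\beta_\delta,
\]
so $(A+\alpha,\phi)$ solves \eqref{E:sw4}; the map $(A,\phi)\mapsto(A-\alpha,\phi)$ reverses this computation and is a two-sided inverse. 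Next, $\Phi$ is gauge equivariant: a gauge transformation $u\colon Y\to U(1)$ acts by $A\mapsto A-2u^{-1}du$ and $\phi\mapsto u\phi$, and translating $A$ by the fixed form $\alpha$ while leaving $\phi$ alone commutes with this. Hence $\Phi$ descends to a bijection $\M_{\alpha,\omega_\delta}(Y,g(Y))\to\M(Y,g(Y),\beta_\delta)$ which sends reducibles ($\phi=0$) to reducibles and irreducibles to irreducibles; and since $\Phi$ and $\Phi^{-1}$ are restrictions of affine-linear (hence smooth) maps of the ambient Banach configuration spaces, the induced map of moduli spaces is a diffeomorphism.

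It remains to see that $\Phi$ is orientation preserving, and this is the only place where I expect genuine care to be needed. The key point is that under the identification $(A,\phi)\leftrightarrow(A+\alpha,\phi)$ the deformation operators of the two sets of equations literally coincide: the perturbing terms $\omega_\delta$ and $d\beta_\delta$ are constant and drop out of the linearizations of the curvature equations, while the linearizations of the two Dirac equations --- one carrying the extra zeroth-order term of Clifford multiplication by $\alpha$, the other not but based at the shifted connection $A+\alpha$ --- agree term by term. The orientations of $\M_{\alpha,\omega_\delta}(Y,g(Y))$ and of $\M(Y,g(Y),\beta_\delta)$ are each defined from the determinant line of precisely this operator, trivialized using $H^*(Y;\mathbb R)=H^*(S^3;\mathbb R)$. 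Thus the remaining obstacle is to confirm that the sign conventions of \cite{Lim1}, governing the first moduli space, and of \cite{MRS}, governing the moduli space $\M(Y,g,\beta)$ used to define $\lsw$, assign the same canonical orientation to the zero-dimensional Seiberg--Witten moduli space of a homology sphere. Granting this --- which is standard --- the coincidence of deformation operators shows that $\Phi$ respects orientations, and the proposition follows.
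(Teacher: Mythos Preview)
Your proposal is correct and follows essentially the same approach as the paper: both use the affine change of variables $B=A+\alpha$ (your map $\Phi$) to carry the $(\alpha,\omega_\delta)$--perturbed equations to the $\beta_\delta$--perturbed ones, and both reduce the orientation statement to a comparison of conventions. Your write-up simply fills in the details (gauge equivariance, coincidence of the deformation operators) that the paper's one-line proof leaves implicit.
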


\begin{proof}
The change of variables $B = A + \alpha$ in the Seiberg--Witten equations \eqref{E:sw2} results in the equations $F_B - d\alpha + d \gamma_{\delta} = \tau(\phi)$ and $\D_B (\phi) = 0$, which match the equations \eqref{E:sw4} defining $\M(Y,g(Y),\beta_{\delta})$ once we set $\beta_{\delta} = \alpha - \gamma_{\delta}$. The result now follows by comparing the orientation conventions for the two moduli spaces.
\end{proof}

 The following result is a special case of Corollary \ref{C:orbifold} when $n=1$.
 
 \begin{cor}\label{C:manifold}
For a generic $\delta\in (\epsilon,\delta_{0})$, the map $R^{\irr}$ is transversal to $\S_{1 - \delta}$. Moreover, for all $L>0$ sufficiently large, the oriented count of points in the moduli space $\M^{\irr} (Y,g(Y)_L,\beta_{\delta})$ equals the intersection number of $R (\M^{\irr}_{\alpha,\omega} (N^*))$ with the circle $\S_{1-\delta}$ in the torus $\chi(T)$. 
\end{cor}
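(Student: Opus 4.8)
The plan is to obtain Corollary \ref{C:manifold} as the special case $n = 1$ of Corollary \ref{C:orbifold}, once one unwinds what the orbifold constructions of Section \ref{section: monopole orbifold} reduce to in that case. When $n = 1$ the cone angle $2\pi/n$ equals $2\pi$, so the orbifold $Y^o$ has empty singular locus and is, as a smooth Riemannian manifold, just $Y$: the order-$n$ isometry $\iota$ of Definition \ref{defi: bullet-type metrics} is the identity, whence $g^o_{u,v} = g_{u,v}$ and the neck-stretched orbifold metric $g(Y^o)_L$ is the smooth metric $g(Y)_L$; similarly $\nu^o = \nu$ and $\omega^o_\delta = \omega + \delta\cdot\nu = \omega_\delta$. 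By Baldridge's parametrization \cite{Baldridge} there is a single orbifold $\spinc$ structure $\s_0$, which is the unique spin structure of $Y$, and the orbifold Seiberg--Witten equations \eqref{E:sw3} become the ordinary Seiberg--Witten equations on $Y$. Finally, the circle $\S_{2k/n + 1 - \delta}$ with $k = 0$ and $n = 1$ is exactly $\S_{1 - \delta}$.

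With these identifications in hand, the transversality of $R^{\irr}$ to $\S_{1-\delta}$ for generic $\delta \in (\epsilon, \delta_0)$ is the $n = 1$, $k = 0$ instance of Proposition \ref{gluing2}, and Corollary \ref{C:orbifold} then gives, for all sufficiently large $L$, that the oriented count of points of $\M^{\irr}(Y, \s, g(Y)_L, \omega_\delta - d\alpha)$ equals the intersection number of $R(\M^{\irr}_{\alpha,\omega}(N^*))$ with $\S_{1-\delta}$ in $\chi(T)$. The remaining step is to recognize this moduli space as $\M^{\irr}(Y, g(Y)_L, \beta_\delta)$: since $H^2(Y;\mathbb R) = 0$ one may write $\omega_\delta = d\gamma_\delta$ and set $\beta_\delta = \alpha - \gamma_\delta$, so that $\omega_\delta - d\alpha = -d\beta_\delta$ and the conditions $F_A + (\omega_\delta - d\alpha) = \tau(\phi)$, $\D_A(\phi) = 0$ cutting out the first moduli space are precisely the equations \eqref{E:sw4} cutting out $\M(Y, g(Y)_L, \beta_\delta)$. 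That the orientation this moduli space inherits from the $H^1(N^*;\mathbb R)$-orientation of $\M^{\irr}_{\alpha,\omega}(N^*)$ through the gluing of Proposition \ref{gluing2} matches the orientation of $\M(Y, g(Y)_L, \beta_\delta)$ used in \cite{MRS} reduces to the orientation comparison carried out in Proposition \ref{P:diff1}, applied with the neck-stretched metric $g(Y)_L$ in place of $g(Y)$.

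Because the whole argument is a direct specialization of statements already proved in the orbifold setting, I expect no substantive difficulty here; the only point demanding care is the bookkeeping of the perturbing $1$- and $2$-forms and of the orientation conventions as one moves among equations \eqref{E:sw2}, \eqref{E:sw3}, and \eqref{E:sw4}, and this is exactly what Propositions \ref{gluing2} and \ref{P:diff1} already take care of.
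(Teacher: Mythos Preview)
Your proposal is correct and follows exactly the paper's approach: the paper's entire proof is the one-line remark that Corollary~\ref{C:manifold} is the special case $n=1$ of Corollary~\ref{C:orbifold}. You have simply unpacked this specialization carefully, including the identification of $\M^{\irr}(Y,\s_0,g(Y)_L,\omega_\delta - d\alpha)$ with $\M^{\irr}(Y,g(Y)_L,\beta_\delta)$ via the change of variables underlying Proposition~\ref{P:diff1}, which the paper leaves implicit.
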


\subsection{The spectral flow formula}
According to Part (1) of Theorem \ref{moduli space on N}, the moduli space $\M^{\red}_{\alpha,\omega}(N^*)$ is a circle, for any sufficiently small generic perturbations $\alpha$ and $\omega$. This circle admits a parametrization by $\theta (N^*,a) = (R^{\red})^{-1}(\S_a)$, thereby giving rise to a family of twisted Dirac operators $D_{\theta(N^*,a)}$ on the product end manifold $N^*$. The $L^2$ completions of these operators are self-adjoint, which leads to a well-defined notion of spectral flow; see for instance Cappell, Lee, and Miller \cite{CLM}. The points  where the operators $D_{\theta(N^*,a)}$ have non-zero kernels are precisely the points in $\M^{\red}_{\alpha,\omega} (N^*)$ which serve as the boundary points of the closure of $(R^{\irr})^{-1} (U_{\epsilon})$. It then follows from Part (3) of Theorem \ref{moduli space on N} that, for any choice of perturbations $\alpha$ and $\omega$ as in that theorem, the spectral flow is transverse, that is, the spectral curves of the family $D_{\theta(N^*,a)}$ intersect the $a$--axis transversely with multiplicity one. 

\begin{lem}\label{L:sf}
For sufficiently small $\alpha$ and $\omega$, the spectral flow of the family of twisted Dirac operator $\D_{\theta(N^*,a)}$ around the circle is equal to zero. 
\end{lem}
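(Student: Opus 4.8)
The plan is to show that the spectral flow of $\D_{\theta(N^*,a)}$ around the circle $\M^{\red}_{\alpha,\omega}(N^*)$ vanishes by isolating the single transverse zero-crossing and showing it makes a net zero contribution, which forces the total spectral flow to be $0$. First I would recall from Part (3) of Theorem \ref{moduli space on N} and the discussion preceding this lemma that the reducible circle $\M^{\red}_{\alpha,\omega}(N^*)$ contains exactly one point where $\D_{\theta(N^*,a)}$ has nonzero kernel, namely the unique reducible that serves as a boundary point of the closure of $(R^{\irr})^{-1}(U_\epsilon)$; equivalently it is the reducible lying on the circle $R^{\red}(\M^{\red}_{\alpha,\omega}(N^*))=\{b=-1+c(\omega)\}$ whose $a$-coordinate corresponds to the point $[A_0]$ of $\chi(T)$, i.e. $a=0\bmod 2$. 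At every other point of the circle the coupled Dirac operator is invertible. Thus as $a$ traverses $\mathbb R/2\mathbb Z$ the spectral flow is accumulated entirely at this single $a$-value, where by transversality the relevant eigenvalue crosses zero transversely with multiplicity one.

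The key step is then to observe that, going once around the circle, the parameter $a$ returns to its starting value, and the relevant spectral curve is a smooth function of $a\in\mathbb R/2\mathbb Z$ crossing zero exactly once and transversely. A smooth real-valued function on a circle that has exactly one transverse zero is impossible (the number of transverse zeros of a smooth function on $S^1$ is even); hence the ``single transverse crossing'' must in fact be counted with a sign that cancels, i.e. the eigenvalue leaves $0$ and returns to $0$ from the same side after going around. Concretely: I would argue that the spectral flow around a loop is always the signed count of crossings, and since the one-parameter family is periodic in $a$, the eigenvalue function $\lambda(a)$ (the unique eigenvalue of $\D_{\theta(N^*,a)}$ tending to zero near $a=0$) satisfies $\lambda(a)\to 0$ as $a\to 0^+$ and as $a\to 0^-$; a transverse crossing at which $\lambda$ changes sign once would mean $\lambda$ has opposite signs on the two sides, but periodicity identifies those two sides of a single point on the circle. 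The careful bookkeeping here is to match the orientation/sign conventions for spectral flow (following Cappell--Lee--Miller \cite{CLM}) so that the contribution of the lone crossing is computed correctly; this is where one must be attentive, because naively ``one transverse crossing'' suggests $\pm 1$, and the content of the lemma is that the correct signed count is $0$.

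An alternative, perhaps cleaner route I would keep in reserve: use the fact that for $\omega$ with vanishing cohomology class the reducibles on $N^*$ are genuine flat connections mapped to the circle $\{b=-1\}\subset\chi(T)$, and the twisted Dirac operator $\D_{\theta(N^*,a)}$ is then gauge-equivalent around the loop to a family whose spectral flow can be computed directly from the index theory on $N^*$ (or from the corresponding computation on $T$ via the excision/splitting for spectral flow), exploiting that $b$ is held fixed at $-1$ while only $a$ varies and $a$ labels a loop that is null-homotopic once one removes the singular point—so the relevant winding/spectral-flow invariant is determined by a local model near $[A_0]$, which by the transversality in Part (3) is exactly the standard $\mathbb R\times\mathbb R^+\to\mathbb R$, $(t,z)\mapsto tz$ model, contributing zero net spectral flow. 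The main obstacle is the sign/convention matching in either approach: one must verify that the local model near the singular reducible, together with the chosen orientation of $H^1(N^*;\mathbb R)$, yields cancellation rather than reinforcement of the two local contributions to the spectral flow. Once that sign computation is pinned down, continuity of small perturbations $\alpha,\omega$ and the transversality already established give the result for all sufficiently small $\alpha$ and $\omega$.
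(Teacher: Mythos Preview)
Your proposal has a genuine gap: the claim that there is \emph{exactly one} point on the reducible circle where $\D_{\theta(N^*,a)}$ has nonzero kernel is incorrect, and the rest of the argument collapses without it. The paper says the zero-kernel points are precisely those reducibles that arise as boundary points of the closure of $(R^{\irr})^{-1}(U_\epsilon)$, but nowhere does it assert (and it is not true in general) that there is only one such point. Indeed, these crossings are exactly what contribute to the spectral flow terms $\SF(\D_{\theta(N^*,1-\delta)},\D_{\theta(N^*,2k/n+1-\delta)})$ in Proposition~\ref{orbifold versus manifold}, which are typically nonzero and encode nontrivial knot information. You also appear to be conflating the kernel of the Dirac operator on $T$ (which is nonzero only at $[A_{(0,0)}]$) with the kernel of the Dirac operator on $N^*$; the reducible circle sits near $b=-1$, far from $[A_{(0,0)}]$, and the $N^*$ Dirac kernel is governed by the global geometry of $N$, not by the limiting connection alone. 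Finally, your parity argument is self-defeating: you correctly note that a periodic function cannot have exactly one transverse zero, but rather than concluding that your premise is wrong, you try to salvage it by asserting that the single crossing ``must be counted with a sign that cancels,'' which is not how spectral flow works.

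The paper's argument is entirely different and relies on a symmetry you do not invoke: the unperturbed spin Dirac operator on $N^*$ is quaternionic linear, so for the family $\D_a = \D + ia\,df$ the $j$-conjugate of $\D_a$ is $\D_{-a}$. This forces the spectral picture to be symmetric under $a\mapsto -a$, and with the Atiyah--Patodi--Singer convention (intersection with a small vertical shift of the $a$-axis) the spectral flow around the loop vanishes. The perturbed family $\D_{\theta(N^*,a)}$ is a small deformation of $\D_a$, and since spectral flow is deformation invariant, it vanishes for sufficiently small $\alpha,\omega$ as well. The key idea you are missing is this quaternionic symmetry.
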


\begin{proof}
Let $f: N^* \to S^1$ be an arbitrary smooth function which induces an isomorphism $f_*: H_1 (X) \to \Z$ and consider the family $\D_a = \D + ia\, df$ of twisted Dirac operators on $N^*$. The unperturbed spin Dirac operator $\D$ is quaternionic linear, hence the $j$--conjugate of $\D_a$ is $\D_{-a}$, which makes the picture of the spectral curves of $\D_a$ symmetric with respect to the involution sending $a$ to $-a$. This ensures that the spectral flow of the family $\D_a$, which was defined in \cite[Section 7]{APS:III} as the intersection number of the spectral curves with a small vertical shift of the $a$--axis, vanishes. The family $\D_{\theta(N^*,a)}$ is a deformation of the family $\D_a$ hence its spectral flow vanishes as well, as long as the perturbations $\alpha$ and $\omega$ are sufficiently small. 
\end{proof}

In what follows, we will choose a generic $\delta > 0$ such that following condition is satisfied:
\begin{equation}\label{eq: Dirac invertible}
  \text{the operators}\;\, \D_{\theta(N^*,a)}\;\, \text{are invertible at}\;  a = 2k/n - 1 - \delta\; \text{for all $k$}. 
\end{equation}

\begin{pro}\label{orbifold versus manifold} 
For all sufficiently large $L > 0$, sufficiently small generic perturbations $\alpha$ and $\omega$, and generic $\delta > 0$, one has the relation
\[
\begin{split}
\#\M^{\irr}(Y^o, \s_k,g(Y^o)_L,\omega^o_{\delta} - d\alpha)\, & =\, \#\M^{\irr}(Y, g(Y)_L,\beta_{\delta})\\
& \quad -\, \SF (\D_{\theta(N^*, 1-\delta)}, \D_{\theta(N^*, 2k/n+1-\delta)}).
\end{split}
\] 
The spectral flow in this formula is calculated along any path in the circle $\theta(N^*,a)$ that leads from $\theta(N^*, 1-\delta)$ to $\theta(N^*, 2k/n +1-\delta)$; according to Lemma \ref{L:sf}, this is a well defined quantity.
\end{pro}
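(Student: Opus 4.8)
The plan is to count points in the two irreducible moduli spaces by relating both to intersection numbers of the single curve $R(\M^{\irr}_{\alpha,\omega}(N^*))$ with parallel circles in $\chi(T)$, and then to account for the discrepancy between the two intersection numbers via the spectral flow of the reducible family. Concretely, Corollary \ref{C:orbifold} identifies $\#\M^{\irr}(Y^o,\s_k,g(Y^o)_L,\omega^o_\delta-d\alpha)$ with the intersection number $R(\M^{\irr}_{\alpha,\omega}(N^*))\cdot \S_{2k/n+1-\delta}$, while Corollary \ref{C:manifold} identifies $\#\M^{\irr}(Y,g(Y)_L,\beta_\delta)$ with $R(\M^{\irr}_{\alpha,\omega}(N^*))\cdot \S_{1-\delta}$. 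So the proposition reduces to the purely topological statement
\[
R(\M^{\irr}_{\alpha,\omega}(N^*))\cdot \S_{2k/n+1-\delta}\; -\; R(\M^{\irr}_{\alpha,\omega}(N^*))\cdot \S_{1-\delta}\; =\; -\,\SF(\D_{\theta(N^*,1-\delta)},\D_{\theta(N^*,2k/n+1-\delta)}).
\]

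First I would make precise the geometry in $\chi(T)=\mathbb R^2/(2\mathbb Z)^2$: the circles $\S_a$ foliate the torus, $\S_a\cdot\S_{a'}=0$ for $a,a'$ in the regular region, and two such circles $\S_{a}$, $\S_{a'}$ cobound an annulus $A_{a,a'}$ (a union of arcs of the $b$-parameter flow). By Part (2) of Theorem \ref{moduli space on N}, the closure of $(R^{\irr})^{-1}(U_\epsilon)$ is a compact oriented $1$-manifold whose boundary lies in $\M^{\red}_{\alpha,\omega}(N^*)\cup R^{-1}(\p\overline U_\epsilon)$; intersecting this $1$-manifold with $R^{-1}(A_{a,a'})$ and applying the oriented-boundary count (Stokes/degree argument on the annulus, using that $\S_a,\S_{a'}$ are oriented by $\p/\p b$ and $\chi(T)$ by $\p/\p a\wedge \p/\p b$), one gets that the difference of the two intersection numbers equals the signed count of endpoints of $R^{\irr}$-arcs that limit onto the reducible circle inside the annular region between $a=1-\delta$ and $a=2k/n+1-\delta$. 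This is where Part (3) of Theorem \ref{moduli space on N} enters: each such endpoint is a point of $\M^{\red}_{\alpha,\omega}(N^*)$ where $\D_{\theta(N^*,a)}$ has nonzero kernel, and the local model $tz$ pins down the sign with which the irreducible arc emanates.

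The last step is to match that signed count of kernel-jumps of $\{\D_{\theta(N^*,a)}\}$ along the reducible circle with the spectral flow $\SF(\D_{\theta(N^*,1-\delta)},\D_{\theta(N^*,2k/n+1-\delta)})$. Here the discussion in the subsection preceding the proposition is the key input: the spectral flow along the reducible circle is transverse, so it is literally the signed count of parameters $a$ at which an eigenvalue crosses zero, and condition \eqref{eq: Dirac invertible} guarantees the endpoints of the path are not such crossings. I would argue that the orientation conventions — the canonical orientations of $(R^{\irr})^{-1}(U_\epsilon)$ and $\M^{\red}_{\alpha,\omega}(N^*)$ coming from $H^1(N^*;\mathbb R)$ in Part (5), together with the $tz$ normal model — are exactly rigged so that an irreducible arc hits the reducible circle with $+1$ precisely when the corresponding Dirac eigenvalue crosses with the sign contributing $+1$ to spectral flow, hence the overall minus sign in the stated formula. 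A clean way to package this is to invoke the relevant part of Lim's analysis (the orbifold analog is already used in Proposition \ref{gluing2} and Corollary \ref{C:orbifold}), citing \cite[p.\ 637]{Lim1} and \cite[Theorem 1.4]{Lim2}, so that what remains is only the bookkeeping of which circles bound which annulus and a consistency check of signs.

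I expect the main obstacle to be precisely this sign/orientation reconciliation: keeping straight (i) the orientation of $\chi(T)$ versus the orientations of the circles $\S_a$, (ii) the canonical orientation of the irreducible $1$-manifold and of the reducible circle from $H^1(N^*;\mathbb R)$, and (iii) the sign convention in the definition of spectral flow (the ``small vertical shift of the $a$-axis'' convention of \cite[Section 7]{APS:III} used in Lemma \ref{L:sf}), so that all of them conspire into the single clean minus sign above. The analytic content — that irreducibles can only break onto the reducible locus at Dirac-kernel points, with the standard transverse local model — is already supplied by Theorem \ref{moduli space on N} and the gluing results, so the proof is essentially a careful oriented intersection-number computation in the pillowcase $\chi(T)$ combined with the definition of spectral flow.
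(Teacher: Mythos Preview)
Your proposal is correct and follows essentially the same approach as the paper: both identify the two irreducible counts with intersection numbers via Corollaries \ref{C:orbifold} and \ref{C:manifold}, then use the compact oriented $1$-manifold of Theorem \ref{moduli space on N}(2)--(3) as a cobordism whose reducible boundary points in the interval between $\theta(N^*,1-\delta)$ and $\theta(N^*,2k/n+1-\delta)$ are exactly the Dirac-kernel points contributing to the spectral flow. The paper handles the sign reconciliation you flag by citing Lim \cite[Section 11.3]{Lim2} and \cite[page 635]{Lim1}, and makes explicit the choice of the interval not containing $\theta(N^*,0)$ so as to remain in $U_\epsilon$.
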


\begin{proof} 
According to Corollary \ref{C:orbifold} and Corollary \ref{C:manifold}, we have the following identities
\begin{gather*}
\#\M^{\irr}(Y^o, \s_k, g(Y^o)_L, \omega^o_{\delta} - d\alpha) = \#(R(\M^{\irr}_{\alpha,\omega} (N^*))\,\largecap\,\S_{2k/n+1-\delta})\quad\text{and}\\
\#\M^{\irr}(Y, g(Y)_L,\beta_{\delta})=\#(R(\M^{\irr}_{\alpha,\omega} (N^*))\,\largecap\,\S_{1-\delta}).
\end{gather*}
According to Theorem \ref{moduli space on N}, the one-dimensional moduli space $\M^{\irr}_{\alpha,\omega} (N^*)$ provides an oriented cobordism between the points in $\M^{\irr}_{\alpha,\omega} (N^*)\,\largecap\,R^{-1}(\S_{1-\delta})$, the points in $\M^{\irr}_{\alpha,\omega} (N^*)\,\largecap\,R^{-1}(\S_{2k/n-1-\delta})$, and the reducible boundary points in $\M^{\red}_{\alpha,\omega} (N^*)$ that fit into the interval between $\theta (N^*,1-\delta)$ and $\theta(N^*,2k/n+1-\delta)$; of the two intervals in the circle $\M^{\red}_{\alpha,\omega} (N^*)$ having the same endpoints, we chose the one that does not contain $\theta(N^*,0)$ so as to stay within $U_{\epsilon}$. The reducible boundary points in $\M^{\red}_{\alpha,\omega} (N^*)$ correspond to the points where the Dirac operators $D_{\theta(N^*,a)}$ have non-zero kernels hence these are precisely the points that contribute to the spectral flow in the statement of the proposition. That they contribute with the right sign follows from the description of the signs in \cite[Section 11.3]{Lim2} and \cite[page 635]{Lim1}: when moving in the direction of the orientation of $\M^{\red}_{\alpha,\omega} (N^*)$, the contribution to the spectral flow at $\theta(N^*,a)$ is positive  if and only if the orientation of $\M^{\irr}_{\alpha,\omega} (N^*)$ is into $\theta(N^*,a)$. 
\end{proof}

%%%%%%%%%%%%%%%%%%%%%%%%%%%%%%%%%%%%%%%%%%%%%%%%%%

\section{Eta-invariant of the Dirac operator}\label{S:dirac}
Let $X$ be the mapping torus of the covering translation $\tau: \Sigma \to \Sigma$ with the standard orientation and homology orientation. A choice of spin structure on $X$ induces a spin structure on $\Sigma$. This induced spin structure is invariant under $\tau$ hence it is the same spin structure that lifts the unique spin structure on the integral homology sphere $Y$. We will have these particular spin structures in mind when talking about the spin manifolds $\Sigma$ and $Y$. In this section, we will compare the $\eta$--invariants of the (twisted) spin Dirac operators on $\Sigma$ and $Y$. The idea for this comparison comes from Lim \cite{Lim1}. 

For the knot $K \subset Y$, consider its preimage $\widetilde K \subset \Sigma$ under the branched cover projection $\Sigma \to Y$. The knot $\widetilde K$ can also be viewed as the fixed point set of the covering translation $\tau: \Sigma \to \Sigma$. Write
\begin{equation}\label{E:dec1}
Y = N\, \largecup\, D(K)\quad\text{and}\quad \Sigma = N_n\, \largecup\, \widetilde D(K),
\end{equation}
where $N_n \to N$ is a regular $n$--fold cover. Let $Y (0)$ be the manifold obtained from $Y$ by 0-surgery on the knot $K$, and similarly $\Sigma(0)$ the manifold obtained from $\Sigma$ by 0-surgery on $\widetilde K$. Write
\begin{equation}\label{E:dec2}
Y(0) = N\, \largecup\, D(0)\quad\text{and}\quad \Sigma(0) = N_n\, \largecup\, \widetilde D(0),
\end{equation}
where $D(0)$ is a solid torus $D^{2}\times S^{1}$ glued to $N$ along its torus boundary by sending its longitude and meridian to the curve $m$ and $l$ in $T=\partial N$ respectively (see Section \ref{section: monopole on product end}), and similarly for $\widetilde D(0)$. The regular $n$--fold cover $N_n \to N$ extends to a regular $n$--fold cover $\Sigma(0) \to Y(0)$. The aforementioned spin structures on $Y$ and $\Sigma$ give rise to spin structures on $Y(0)$ and $\Sigma(0)$. Denote by $g(Y(0))$ the metric on the manifold $Y_{0}$ that equals $g_{N}$ on $N$ and the bullet-type metric $g_{v,u}$ on $D(0)$. We use the symbol $g(\Sigma)$ and $g(\Sigma(0))$ to denote the pull back of the metrics $g(Y^{o})$ and $g(Y(0))$. For any positive constant $L$, denote by $g(*)_L$ the metrics on these manifolds obtained from $g(*)$ by neck stretching as in Section \ref{S:monopoles}; these metrics will sometimes be suppressed in our notations.

Let $\theta (Y)$ be the unique reducible monopole in $\M_{\alpha,\omega_{\delta}} (Y, g(Y)_L)$ for the choices of $\alpha$, $\omega_{\delta}$ and $L$ as Corollary \ref{C:manifold}; this monopole is the unique extension to $Y$ of the reducible monopole $\theta (N^*,1-\delta)$ on $N^*$ whose limiting value is the flat connection $A_{(1-\delta,-1+c(\omega))}$. Denote by $\theta (\Sigma)$ the lift of $\theta (Y)$ to $\Sigma$. 

Note that each reducible monopole $\theta (N^*,a)$ on $N^*$ admits a unique extension to a reducible monopole on $Y(0)$, which will be called $\theta(Y(0),a)$. This follows by applying Theorem \ref{moduli space on N} to the manifold $D(0)$ in place of $N$, and matching the limiting values in $\chi(T)$ by choosing an appropriate perturbation on $D(0)$. This perturbation can be made arbitrary small by choosing small $\alpha$ and $\omega$. A similar construction applied to $\Sigma (0)$ in place of $Y(0)$ gives rise to the monopole $\theta(\Sigma(0),a)$ on $\Sigma(0)$. The monopoles $\theta(Y(0),1-\delta)$ and $\theta(\Sigma(0),1-\delta)$ will be referred to as simply $\theta(Y(0))$ and $\theta(\Sigma(0))$.

%We will consider the following spin-c connections:
%\begin{itemize}
%\item $\theta(Y_{L})$: flat connection on $Y_{L}$;
%\item $\theta(\Sigma_L)$: flat connection on $\Sigma_L$;
%\item $\theta(Y_{L}(0),1)$: flat connection on $Y_{L}(0)$ such that $\theta(Y_{L}(0),1)|_{T}=A_{(-1,-1)}$;
%\item $\theta(Y_{L}(0),0)$: flat connection on $Y_{L}(0)$ such that $\theta(Y_{L}(0),0)|_{T}=A_{(0,-1)}$;
%\item $\theta(\Sigma_{L}(0),1)$: lift of $\theta(Y_{L}(0),1)$ to $\Sigma_{L}(0)$ under the covering map.
%\end{itemize}

\begin{lem}\label{dirac-eta surgery}
For any generic $\delta > 0$ that satisfied (\ref{eq: Dirac invertible}), there exist constants $C_1$ and $C_2$ which are independent of $Y$, $\Sigma$, $K$, and $L$ (but may depend on $\delta$) such that, for all sufficiently large $L > 0$, the following property holds
\begin{gather}
\eta(\D_{\theta(Y)})\, =\, \eta(\D_{\theta(Y(0))}) + C_1 + o(1), \label{eta compare 1} \\
\eta(\D_{\theta(\Sigma)})\, =\, \eta(\D_{\theta(\Sigma(0))}) + C_2 + o(1), \label{eta compare 2}
\end{gather}
where $o(1)$ as usual denotes a quantity that limits to zero as $L$ goes to infinity.
\end{lem}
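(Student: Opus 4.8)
The plan is to prove \eqref{eta compare 1} and \eqref{eta compare 2} together, since the second is obtained from the first by passing to the $n$-fold cover and using that all constructions (metrics, perturbations, monopoles) on $\Sigma$ and $\Sigma(0)$ are pulled back from $Y^o$ and $Y(0)$; the only subtlety is to make sure the constant $C_2$ does not secretly depend on $n$ or on the cover, which I address below. So I will first establish \eqref{eta compare 1} in detail and then indicate the minor modifications for \eqref{eta compare 2}.

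For \eqref{eta compare 1}: both $Y$ and $Y(0)$ are obtained by gluing the \emph{same} piece — the product-end/neck region built from $N$ with the metric $g(N)$ and the neck $[-L,L]\times T$ — to two different fillings, namely the bullet-type solid torus $D(K)$ with metric $g_{u,v}$ in the case of $Y$, and the $0$-surgery solid torus $D(0)$ with metric $g_{v,u}$ in the case of $Y(0)$. Moreover the reducible monopoles $\theta(Y)$ and $\theta(Y(0))$ restrict to the \emph{same} reducible monopole $\theta(N^*,1-\delta)$ on the common part, with the same limiting flat connection $A_{(1-\delta,-1+c(\omega))}\in\chi(T)$. The condition \eqref{eq: Dirac invertible} guarantees that the twisted Dirac operator $\D_{\theta(N^*,1-\delta)}$ has trivial kernel, so the limiting flat connection on $T$ is such that the corresponding twisted Dirac operator on $T$ is invertible. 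This is exactly the hypothesis needed to apply the Mazzeo--Melrose-type splitting formula for the $\eta$-invariant \cite{MM} (as used by Lim \cite{Lim1}): as $L\to\infty$, $\eta(\D_{\theta(Y)})$ splits as a contribution $\eta^{N}$ from the $N$-with-cylindrical-end piece plus a contribution $C_1$ from the bullet-type cap $D(K)$ with its reducible monopole, up to $o(1)$; and likewise $\eta(\D_{\theta(Y(0))})$ splits as the \emph{same} $\eta^{N}$ plus a contribution $C_1'$ from the $0$-surgery cap $D(0)$, up to $o(1)$. Subtracting, $\eta(\D_{\theta(Y)})-\eta(\D_{\theta(Y(0))}) = C_1 - C_1' + o(1)$, and I set $C_1 := C_1 - C_1'$. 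The key point for the statement is that the cap contributions $C_1$ and $C_1'$ are computed on fixed model solid tori with fixed metrics ($g_{u,v}$ and $g_{v,u}$ — note the parameters $u,v$ are the fixed geodesic lengths of $m,\ell$, not varying with $Y,\Sigma,K,L$) carrying explicit flat/reducible data pinned down by the matching condition in $\chi(T)$, so they depend only on $\delta$ (through the choice of $A_{(1-\delta,-1+c(\omega))}$) and on the universal model geometry, not on the knot $K$, the ambient homology sphere $Y$, or the neck length $L$. That is precisely the asserted independence.

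For \eqref{eta compare 2}: $\Sigma = N_n\cup\widetilde D(K)$ and $\Sigma(0) = N_n\cup\widetilde D(0)$, with metrics and monopole data pulled back from downstairs, and the decomposition along the torus $\widetilde T = \partial N_n$ is $n$-to-$1$ over $T$. The same Mazzeo--Melrose splitting applies on $\Sigma$: one gets $\eta(\D_{\theta(\Sigma)}) = \widetilde\eta^{N_n} + C_2^{\mathrm{cap}} + o(1)$ and $\eta(\D_{\theta(\Sigma(0))}) = \widetilde\eta^{N_n} + (C_2^{\mathrm{cap}})' + o(1)$, where now the caps $\widetilde D(K)$ and $\widetilde D(0)$ are the \emph{same model solid tori} $D^2\times S^1$ with metrics $g_{nu,v}$ (or its $0$-surgery analogue) — these are the covers of the orbifold caps, hence honest smooth solid tori whose geometry is a fixed function of the parameters, and the pulled-back reducible monopole restricts to a flat connection on $\widetilde T$ determined by pulling back $A_{(1-\delta,-1+c(\omega))}$. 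So $C_2 := C_2^{\mathrm{cap}} - (C_2^{\mathrm{cap}})'$ again depends only on $\delta$. The one thing to check carefully is that the common piece $\widetilde\eta^{N_n}$ genuinely cancels in the difference — it does, because $N_n$, $g(N_n)$, and $\theta(N^*,1-\delta)$ pulled back are literally the same on both sides — and that invertibility of the limiting twisted Dirac operator on $\widetilde T$ still holds; this follows because $\widetilde T\to T$ is a covering and the pullback of an invertible flat twisted Dirac operator on $T$ to $\widetilde T$ is again invertible (its kernel would push down, or more directly the spectrum only grows).

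The main obstacle I expect is not any single identity but the bookkeeping of \emph{signs and normalizations} in the Mazzeo--Melrose splitting, together with verifying that the adiabatic ($L\to\infty$) limit genuinely isolates an $L$-independent cap contribution — i.e.\ that there is no $\log L$ or other divergent term. This is handled by the invertibility hypothesis \eqref{eq: Dirac invertible}: when the limiting operator on the cross-section torus is invertible, the $\eta$-invariant of the stretched manifold converges (no small eigenvalues crossing zero), and the APS/Mazzeo--Melrose gluing gives a clean splitting with exponentially small error $o(1)$. The cleanest way to organize this is to follow Lim \cite[Section 8]{Lim1} verbatim, since there the analogous comparison (between $Y$ and $Y(0)$ for $\Sigma$ an integral homology sphere, i.e.\ the $n=1$ case) is carried out in full; our situation differs only by the presence of a harmless neck and by replacing $Y,\Sigma$ with their branched-cover counterparts, neither of which affects the local cap computation that produces $C_1,C_2$.
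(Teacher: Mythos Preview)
Your approach is essentially the same as the paper's: apply the Mazzeo--Melrose splitting to write each $\eta$-invariant as the sum of a contribution from $N^*$ (common to $Y$ and $Y(0)$, resp.\ $N_n^*$ for $\Sigma$ and $\Sigma(0)$) and a contribution from the solid-torus cap, then subtract so that only the difference of cap $\eta$-invariants survives. The one point the paper makes explicit that you gloss over is invertibility on the cap side: the bullet-type metrics on $D^*(K)$, $D^*(0)$ (and their lifts) have non-negative scalar curvature, which forces $\D_{\theta(D^*(K))}$ and $\D_{\theta(D^*(0))}$ to be invertible, and it is this together with condition \eqref{eq: Dirac invertible} on $N^*$ --- not the implication you wrote from $N^*$-invertibility to $T$-invertibility, which does not hold in that direction --- that guarantees invertibility on the glued manifolds and hence a clean splitting with $o(1)$ error.
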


\begin{proof}  
The restrictions of the monopoles $\theta(Y)$ and $\theta(Y(0))$ to the tubular neighborhoods $D(K)$ and $D(0)$ give rise to the monopoles on the product end manifolds 
\[
D^*(K) = ((-\infty,0]\times T)\largecup_{T} D(K)\quad\text{and}\quad D^*(0) = ((-\infty,0]\times T)\largecup _{T}D(0)
\] 
which will be called, respectively, $\theta(D^*(K))$ and $\theta(D^*(0))$. Since both manifolds $D^*(K)$ and $D^*(0)$ have non-negative scalar curvature, the Dirac operators $\D_{\theta(D^*(K))}$ and $\D_{\theta(D^*(0))}$ are invertible. Combined with the invertibility of $\D_{\theta(N^*,1-\delta)}$, this implies that, for all sufficiently large $L > 0$, the operators $\D_{\theta(Y)}$ and $\D_{\theta(Y(0))}$ are invertible. Moreover, it follows from Mazzeo and Melrose \cite{MM} that
\begin{gather*}
\eta(\D_{\theta(Y)})\,  =\, \eta(\D_{\theta(N^*,1-\delta)}) + \eta(\D_{\theta(D^*(K))}) + o(1)\quad\text{and}\; \\
\eta(\D_{\theta(Y(0))})\,  =\, \eta(\D_{\theta(N^*,1-\delta)}) + \eta(\D_{\theta(D^*(0))}) + o(1).\qquad
\end{gather*}
Subtracting these two formulas, we obtain formula (\ref{eta compare 1}) with the constant $C_1 = \eta(\D_{\theta(D^*(K))}) -\eta(\D_{\theta(D^*(0))})$, which is a linear combination of the $\eta$--invariants of standard solid tori and hence is independent of $Y$, $K$, and $L$. Formula (\ref{eta compare 2}) is proved similarly.
\end{proof}

\begin{lem} 
One has the following identity 
\[
\eta(\D_{\theta(\Sigma(0))})\; =\; \sum_{k=0}^{n-1}\; \eta(\D_{\theta(Y(0),\,2k/n + 1 -\delta)}).
\]
\end{lem}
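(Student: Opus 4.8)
The plan is to realize $\D_{\theta(\Sigma(0))}$ as a direct sum, over the characters of the deck transformation group of the cyclic cover $\Sigma(0)\to Y(0)$, of twisted Dirac operators on $Y(0)$, to identify the $k$-th summand with $\D_{\theta(Y(0),\,2k/n+1-\delta)}$, and then to invoke additivity of the $\eta$--invariant.

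First I would record the covering data. The cover $\Sigma(0)\to Y(0)$ is the connected regular $n$--fold cyclic cover classified by the surjection $H_1(Y(0))=\Z\to\Z/n$ sending the meridian $m$ of $K$ to a generator, with deck group generated by the restriction of $\tau$. Since the metric $g(\Sigma(0))_L$, the spin structure, the reducible monopole $\theta(\Sigma(0))$, and the perturbation defining it are all pulled back from $Y(0)$, the $\Z/n$--action lifts to the twisted spinor bundle of $\theta(\Sigma(0))$ and commutes with $\D_{\theta(\Sigma(0))}$. Decomposing the $L^2$--sections into isotypic components for this action is the familiar splitting $\pi_*\pi^*V\cong\bigoplus_{k=0}^{n-1}V\otimes L_k$, where $L_k$ is the flat $U(1)$ bundle on $Y(0)$ with holonomy $e^{2\pi i k/n}$ along $m$ and trivial holonomy along $\ell$; it exhibits $\D_{\theta(\Sigma(0))}$ as an orthogonal direct sum $\bigoplus_{k=0}^{n-1}\D_{\theta_k}$, where $\D_{\theta_k}$ is the spin Dirac operator on $Y(0)$ coupled to the determinant connection of $\theta(Y(0))$ twisted by $L_k$ in the appropriate way. (If $n$ is even, the lift of the action to the spinor bundle is determined only up to the order--two character, but the two choices merely permute the summands.) Since the $\eta$--invariant is additive over orthogonal direct sums of self-adjoint operators, this yields $\eta(\D_{\theta(\Sigma(0))})=\sum_{k=0}^{n-1}\eta(\D_{\theta_k})$.

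The remaining and most delicate point is the identification $\theta_k=\theta(Y(0),\,2k/n+1-\delta)$. Twisting the determinant connection of a reducible monopole by a flat connection again produces a reducible monopole, so each $\theta_k$ is a reducible monopole on $Y(0)$, and by Theorem~\ref{moduli space on N} (applied both to $N$ and to $D(0)$) it is determined by its limiting flat connection on $T$. The limiting value of $\theta(Y(0))=\theta(Y(0),1-\delta)$ is $A_{(1-\delta,\,-1+c(\omega))}$; the twisting flat connection has trivial holonomy along $\ell$ (which is null--homologous in $N$), so it fixes the $\ell$--coordinate, and a direct holonomy computation using the definition of the coordinates on $\chi(T)$ shows that it shifts the $m$--coordinate by precisely $2k/n$. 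Keeping track of the normalization constants in the definition of $(m(A),\ell(A))$ — in particular the factor of two coming from the determinant line — is exactly what makes this bookkeeping subtle, and it is what selects the shift $2k/n+1-\delta$ that appears throughout Section~\ref{S:monopoles}. Granting this, $\theta_k$ has limiting value $A_{(2k/n+1-\delta,\,-1+c(\omega))}$, hence $\theta_k=\theta(Y(0),\,2k/n+1-\delta)$, and the asserted identity follows.
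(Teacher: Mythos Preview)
Your proposal is correct and follows essentially the same approach as the paper: decompose $\D_{\theta(\Sigma(0))}$ into eigenspaces for the deck group action on pulled-back spinors, invoke additivity of the $\eta$--invariant, and identify each summand with a twisted Dirac operator on $Y(0)$. The paper dispatches the identification step with ``one can easily check'', whereas you actually carry it out by tracking the limiting flat connection on $T$ and computing the shift in the $m$--coordinate; this is a welcome addition rather than a different method.
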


\begin{proof} 
Since $\theta(\Sigma(0))$ is the lift of $\theta(Y(0))$ under the regular $n$--fold covering map $\Sigma(0) \to Y(0)$, the covering translation $\tau: \Sigma(0) \to \Sigma(0)$ induces an action on the spinors, splitting each operator $\D_{\theta(\Sigma(0))}$ into a direct sum of the operators $\D^{\alpha}_{\theta(\Sigma(0))}$ on its eigenspaces. This in turn leads to the identity
\[
\eta(\D_{\theta(\Sigma(0))})\; =\; \sum_{\alpha}\; \eta (\D^{\alpha}_{\theta(\Sigma(0))}).
\]

\smallskip\noindent
One can easily check that the operators $\D^{\alpha}_{\theta(\Sigma(0))}$ in this formula are precisely the operators $\D_{\theta(Y(0),\,2k/n + 1 -\delta)}$, which completes the proof.
\end{proof}

\begin{lem}\label{dirac-eta spectral flow} 
For all sufficiently large $L > 0$ and all $k$, one has 
\[
\eta(\D_{\theta(Y(0),2k/n+1-\delta)})\, =\, \eta(\D_{\theta(Y(0))})\, +\, 2 \SF(\D_{\theta(N^*,\,1-\delta)}, \D_{\theta(N^*,\, 2k/n+1-\delta)}).
\]
\end{lem}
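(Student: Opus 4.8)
The plan is to track how the $\eta$-invariant of $\D_{\theta(Y(0),a)}$ changes as the parameter $a$ is varied from $1-\delta$ to $2k/n+1-\delta$ along the circle $\M^{\red}_{\alpha,\omega}(N^*)$, using the standard relationship between $\eta$-invariants and spectral flow. First I would recall that for a one-parameter family of invertible self-adjoint operators, the difference of $\eta$-invariants at the endpoints equals twice the spectral flow along the path (with the usual convention that spectral flow counts eigenvalue crossings of zero, and the factor of $2$ coming from the fact that each eigenvalue crossing changes $\eta$ by $\pm 2$ since $\eta$ measures the signed count weighted by $\operatorname{sign}$). Concretely, as $a$ runs over a path in the circle $\theta(N^*,a)$ from $\theta(N^*,1-\delta)$ to $\theta(N^*,2k/n+1-\delta)$, the associated family of twisted Dirac operators $\D_{\theta(Y(0),a)}$ on $Y(0)$ is a continuous family whose endpoints are invertible by condition \eqref{eq: Dirac invertible} (and by the positive scalar curvature/invertibility arguments already used in Lemma \ref{dirac-eta surgery} for the solid torus piece $D^*(0)$).

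The key point is that the spectral flow of the family $\D_{\theta(Y(0),a)}$ equals the spectral flow of the family $\D_{\theta(N^*,a)}$ on $N^*$. This is because $Y(0) = N_{}\,\largecup_T\, D(0)$ and the solid torus piece contributes no kernel: the Dirac operator on the product-end manifold $D^*(0)$ is invertible for \emph{all} $a$ along the path, since $D(0)$ carries a metric of non-negative scalar curvature (the bullet-type metric $g_{v,u}$) and the relevant flat limiting connection on the boundary torus stays away from the singular point $[A_0]$ as long as we remain in $U_\epsilon$ — which is exactly the reason the path is chosen to avoid $\theta(N^*,0)$. Hence all eigenvalue crossings of $\D_{\theta(Y(0),a)}$ come from the $N^*$ side, so the two spectral flows agree; this is a neck-stretching/gluing argument of the same type as in Proposition \ref{orbifold versus manifold}, invoking Mazzeo--Melrose \cite{MM} to see that for large $L$ the spectrum of $\D_{\theta(Y(0),a))}$ near zero is governed by the spectrum of the operators on the two pieces. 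Combining the $\eta$-versus-spectral-flow identity with this equality of spectral flows gives
\[
\eta(\D_{\theta(Y(0),2k/n+1-\delta)}) - \eta(\D_{\theta(Y(0))})\, =\, 2\,\SF(\D_{\theta(N^*,1-\delta)}, \D_{\theta(N^*,2k/n+1-\delta)}),
\]
which is the claimed formula. (The well-definedness of the spectral flow, independent of the path within the circle, is Lemma \ref{L:sf}.)

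The main obstacle I expect is bookkeeping of the factor of $2$ and the signs: one must check that the orientation conventions on $\M^{\red}_{\alpha,\omega}(N^*)$ used to define $\SF$ in Proposition \ref{orbifold versus manifold} are consistent with the convention under which a single transverse eigenvalue crossing contributes $\pm 2$ to the jump in $\eta$, and that there are no extra $\tfrac12$-dimensional kernel contributions from the endpoints (ruled out by \eqref{eq: Dirac invertible}). A secondary technical point is justifying the interchange of the limit $L\to\infty$ with the spectral-flow computation — i.e.\ that for all sufficiently large $L$ the count of crossings is exactly the spectral flow on $N^*$ and does not pick up spurious small eigenvalues from the stretched neck; this is handled by the same adiabatic/neck-stretching estimates underlying Lemma \ref{dirac-eta surgery} and the cited results of Lim \cite{Lim1,Lim2}.
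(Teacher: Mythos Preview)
Your proposal is correct and follows essentially the same approach as the paper: first use the Atiyah--Patodi--Singer relation between the difference of $\eta$-invariants and twice the spectral flow on $Y(0)$, then split the spectral flow into contributions from $N^*$ and $D^*(0)$, and observe that the latter vanishes by non-negative scalar curvature. The only minor difference is that the paper invokes the Cappell--Lee--Miller splitting theorem \cite{CLM} rather than Mazzeo--Melrose for the decomposition of the spectral flow, but the argument is otherwise identical.
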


\begin{proof}
According to \cite{APS:III}, the difference $\eta(\D_{\theta(Y(0),\, 2k/n + 1 -\delta)}) - \eta(\D_{\theta(Y(0))})$ equals twice the spectral flow $\SF (\D_{\theta(Y(0))}, \D_{\theta(Y(0),\, 2k/n+1-\delta)})$. According to \cite{CLM}, for all sufficiently large $L > 0$, this spectral flow is the sum of the spectral flow over $D^*(0)$ and the spectral flow over $N^*$. The former vanishes because the metric on $D^*(0)$ has non-negative scalar curvature and the perturbation is small, and the latter equals the spectral flow $\SF (\D_{\theta(N^*,\,1-\delta)}, \D_{\theta(N^*,\,2k/n+1-\delta)})$.
\end{proof}

%By combining the three above lemmas together, we arrive at the main result of this section.

\begin{cor}\label{dirac eta compare 3}
For any $\delta$ that satisfies (\ref{eq: Dirac invertible}), there exists a constant $C'$ which, for all sufficiently large $L > 0$, is independent of $Y$, $\Sigma$, $K$, and $L$, and has the property that
\[
\eta(\D_{\theta(\Sigma)})\, =\, n\cdot \eta(\D_{\theta(Y)})\, +\, 2\, \sum_{k=0}^{n-1}\; \SF(\D_{\theta(N^*,\, 1-\delta)}, \D_{\theta(N^*,\,2k/n+1-\delta)}) + C' + o(1).
\]
\end{cor}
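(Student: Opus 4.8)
The plan is to assemble Corollary \ref{dirac eta compare 3} by chaining together the three lemmas just proved, treating all the constants produced along the way as a single lumped constant $C'$. First I would apply the second lemma above, which splits the $\eta$-invariant on the surgered cover $\Sigma(0)$ as $\eta(\D_{\theta(\Sigma(0))}) = \sum_{k=0}^{n-1} \eta(\D_{\theta(Y(0),\,2k/n+1-\delta)})$. Then I would substitute the spectral flow identity of Lemma \ref{dirac-eta spectral flow} into each summand, replacing $\eta(\D_{\theta(Y(0),\,2k/n+1-\delta)})$ by $\eta(\D_{\theta(Y(0))}) + 2\,\SF(\D_{\theta(N^*,\,1-\delta)},\D_{\theta(N^*,\,2k/n+1-\delta)})$. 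Summing over $k$ gives
\[
\eta(\D_{\theta(\Sigma(0))}) = n\cdot \eta(\D_{\theta(Y(0))}) + 2\sum_{k=0}^{n-1} \SF(\D_{\theta(N^*,\,1-\delta)},\D_{\theta(N^*,\,2k/n+1-\delta)}).
\]

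Next I would use the surgery comparison of Lemma \ref{dirac-eta surgery} in both directions: $\eta(\D_{\theta(Y(0))}) = \eta(\D_{\theta(Y)}) - C_1 + o(1)$ and $\eta(\D_{\theta(\Sigma(0))}) = \eta(\D_{\theta(\Sigma)}) - C_2 + o(1)$. Substituting these into the displayed identity and solving for $\eta(\D_{\theta(\Sigma)})$ yields exactly the claimed formula with $C' = C_2 - n C_1$. The key point, which I would spell out, is that $C_1$ and $C_2$ are the linear combinations $\eta(\D_{\theta(D^*(K))}) - \eta(\D_{\theta(D^*(0))})$ and the analogous expression for $\Sigma(0)$ respectively, and both are $\eta$-invariants of standard solid tori with standard metrics and perturbations; hence $C' = C_2 - nC_1$ is manifestly independent of $Y$, $\Sigma$, $K$, and $L$ (it may depend on $n$ and $\delta$, which is allowed). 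One caveat is that the $o(1)$ terms arising from the two applications of Lemma \ref{dirac-eta surgery} add up to another $o(1)$ term, which is harmless.

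I do not expect any serious obstacle here: the statement is a formal consequence of the three preceding lemmas, and the only thing to be careful about is bookkeeping — keeping track of signs in the spectral flow terms and making sure that the invertibility hypothesis \eqref{eq: Dirac invertible}, which is exactly what is needed for Lemma \ref{dirac-eta surgery} and for the spectral flows in Lemma \ref{dirac-eta spectral flow} to be well defined, is invoked. The mildly delicate point worth a sentence of justification is that the spectral flows $\SF(\D_{\theta(N^*,\,1-\delta)},\D_{\theta(N^*,\,2k/n+1-\delta)})$ appearing after the substitution are the same quantities on both sides (they are computed along $N^*$ only, not on the closed-up manifolds), so nothing needs to be reconciled between the $Y(0)$-side computation and the final $\Sigma$-versus-$Y$ statement. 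Once these observations are in place, the corollary follows by direct substitution.
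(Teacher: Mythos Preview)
Your proposal is correct and is precisely the intended argument: the paper states this result as a corollary with no proof because it follows by direct substitution from the three preceding lemmas, exactly as you outline. Your identification $C' = C_2 - nC_1$ and the bookkeeping of the $o(1)$ terms are both correct.
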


\begin{rmk}\label{rmk: parameter}
So far, we have made choices of the following parameters:
\begin{itemize}
\item $\epsilon > 0:$ the radius of the disk around the singular point $[A_{(0,0)}]\in \chi(T)$,
\item $\alpha,\omega:$ differential forms on $N^*$ used to perturb the Seiberg--Witten equations, and
\item $\delta>0:$ a positive number whose products with the fixed 2-forms $\nu$ and $\nu^{o}$ serve as the perturbations on $D(K)$ and $D^{o}(K)$.
\end{itemize}
(Other parameters, such as the bullet type metric $g_{u,v}$ and the 2-form $\nu$ on $D(K)$, have been fixed since the moment they were defined).
We have imposed several constraints on these parameters. To clarify that these constrains do not contradict each other, we summarize them here: First, we choose $\epsilon > 0$ to be small enough so that Theorem \ref{moduli space on N} applies. The parameter $\epsilon > 0$ also needs to be smaller than the constant $\delta_{0} > 0$ of Lemma \ref{lem: curvature bound}. Then we choose generic $\alpha$ and $\omega$ that meet the requirements of Theorem \ref{moduli space on N}. The perturbations $\alpha$ and $\omega$ also need to be small enough so that Lemma \ref{L:sf}, Proposition \ref{orbifold versus manifold}, and all results of Section \ref{S:dirac} hold. Finally, we choose a generic $\delta\in (\epsilon,\delta_{0})$ to satisfy the condition (\ref{eq: Dirac invertible}) and the condition of Corollary \ref{C:orbifold}. We will fix all these choices from now on and will not discuss them further.
\end{rmk}

%%%%%%%%%%%%%%%%%%%%%%%%%%%%%%%%%%%%%%%%%%%%%%%%%%%

\section{Eta-invariant of the signature operator}\label{S:sign}
In this section, we will analyze the eta-invariants of the odd signature operators on the manifolds $M = Y$, $\Sigma$, $Y(0)$, and $\Sigma(0)$. As before, we fix the long neck metrics $g(M)_{L}$ on these manifolds and use $\eta_{\sign}(M)$ to denote the corresponding eta-invariants. Since $\eta_{\sign} (M)$ is closely related to a topological invariant (the signature), its analysis is much easier than that of the eta-invariant of the Dirac operator; cf. \cite{meyerhoff-ruberman:II}.

%The trick is to use surgery to turn the branched cover into unbranched one. To do this, we let $D(0)$ be a solid torus bounded by $T$, with $l$ bounding a compressing disk. We fix a flat metric $g_{D(0)}$ on $D(0)$ that extends the flat metric $g_{D(K)}|_{T}$ on $T$. Let $\tilde{D}(0)$ be the (unbranched) double cover of $D(0)$ and let  $g_{\tilde{D}(0)}$ be the lift of $g_{D(0)}$. Then $\Sigma(0):=\tilde{N}\largecup_{\tilde{T}} \tilde{D}(0)$ is a double cover of $Y(0):=N\largecup_{T} D(0)$. Under this covering map, the metric $g_{L}\largecup g_{D(0)}$ lifts to the metric $\tilde{g}_{L}\largecup g_{\tilde{D}(0)}$. We use $Y_{L}(0)$ and $\Sigma_{L}(0)$ to denote these Riemannian manifolds.

\begin{lem} One has the following identities
\[
\eta_{\sign}(\Sigma) =\,\eta_{\sign}(\Sigma(0))\quad {\rm and} \quad
\eta_{\sign}(Y)=\eta_{\sign}(Y(0)).
\]
\end{lem}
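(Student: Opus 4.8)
The plan is to compute the eta-invariants of the odd signature operator via the Atiyah--Patodi--Singer signature theorem applied to suitable coboundaries. First recall that for a closed oriented Riemannian 3-manifold $M$ bounding a compact oriented 4-manifold $W$, the APS theorem gives $\eta_{\sign}(M) = 3\sigma(W) - \int_W L$, where $L$ is the Hirzebruch $L$-form of the metric on $W$ and $\sigma(W)$ is the signature; in particular, if $M$ bounds with a metric that is a product near $M$, the interior integral depends only on the metric in a collar, not on the topology. The key observation is that the neck-stretched metrics $g(M)_L$ we are using are by construction product metrics along the stretched neck $[-L,L]\times T$, and the pieces $N$, $D(K)$, $D(0)$, $\widetilde D(K)$, $\widetilde D(0)$ carry fixed metrics. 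Since $\eta_{\sign}$ is a spectral invariant, to prove the two identities it suffices to exhibit, for each pair, a manifold that interpolates between them and to which the APS formula can be applied so that the difference of the eta-invariants reduces to a purely local (metric-dependent, topology-independent) quantity that vanishes.

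Concretely, for the pair $Y$ and $Y(0)$, I would consider the 4-manifold $W$ obtained by attaching a round 2-handle to $Y\times[0,1]$ along $K$ with framing $0$; then $\partial W = -Y \sqcup Y(0)$, and $W$ is a cobordism supported in a neighborhood of $K$. Give $W$ a metric that is a product near each boundary component and restricts to $g(Y)_L$ and $g(Y(0))_L$ there, and that agrees with the fixed product metric on the common piece $N\times[0,1]$; the 2-handle region carries a fixed metric independent of $L$, $Y$, $K$. The APS signature formula yields $\eta_{\sign}(Y(0)) - \eta_{\sign}(Y) = 3\sigma(W) - \int_W L$. Because $H_2(W;\Q)$ is generated (if at all) by a class supported in the handle region with zero self-intersection, and because $Y$, $Y(0)$ are rational homology spheres so the intersection form vanishes, $\sigma(W)=0$; and the $L$-integral, being local and concentrated in the fixed handle region, is a universal constant. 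To see that this constant is in fact zero, I would compare with the model case $K = $ unknot in $S^3$, where $Y = S^3$, $Y(0) = S^1\times S^2$, both have $\eta_{\sign} = 0$ for their standard metrics (and neck-stretching does not change $\eta_{\sign}$ since the relevant operator on the neck is invertible in a way that makes the $L\to\infty$ limit match — or simply by symmetry of the spectrum), forcing the universal constant to vanish. The argument for $\Sigma$ and $\Sigma(0)$ is identical, using the $n$-fold cover of the round 2-handle cobordism, which is again a round 2-handle cobordism attached along $\widetilde K$, with the same local geometry; the universal constant is the same one, hence zero.

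The main obstacle I anticipate is the careful handling of the $L\to\infty$ neck-stretching: one must confirm that the eta-invariant $\eta_{\sign}(M)_L$ is genuinely independent of $L$ (not merely equal up to $o(1)$), which requires that the tangential signature operator on $T$ relevant to the splitting has no small eigenvalues creating spectral flow as $L$ varies — equivalently, that $\theta$-independent; on a flat 2-torus the relevant operator's kernel is controlled, and the signature defect contributions from the two ends of the neck cancel. Alternatively, and more cleanly, one can dispense with $L$ entirely by noting $\eta_{\sign}$ of a 3-manifold only depends on the oriented homeomorphism type together with the metric through a local term, and then invoke the Mazzeo--Melrose gluing formula \cite{MM} exactly as in Lemma \ref{dirac-eta surgery}: $\eta_{\sign}(Y) = \eta_{\sign}^{N^*} + \eta_{\sign}^{D^*(K)} + o(1)$ and $\eta_{\sign}(Y(0)) = \eta_{\sign}^{N^*} + \eta_{\sign}^{D^*(0)} + o(1)$, so the difference reduces to eta-invariants of fixed solid-torus pieces, which one checks directly to be equal (they are both determined by the flat metric on $T$ and the product structure, and the relevant Dirac-type operator on a solid torus with product metric has symmetric spectrum). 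I expect the cleanest write-up to follow this second route, mirroring the structure of Section~\ref{S:dirac}, and to finish by the model-case computation pinning the constants to zero.
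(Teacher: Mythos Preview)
Your first approach---APS on a cobordism confined to the handle region, reduce to a universal constant, then pin that constant with a model case---is essentially the same strategy the paper uses. The paper likewise builds a cobordism whose non-product part is independent of $N$, observes that the Pontryagin form vanishes on the product piece $[0,3]\times N$, checks $\sigma=0$, and evaluates the leftover constant on the model $N=D^2\times S^1$ with $g(N)=g_{u,v}$, where both $(Y,g(Y)_L)$ and $(Y(0),g(Y(0))_L)$ admit orientation-reversing isometries. The one structural difference is that instead of a two-ended 2-handle cobordism, the paper assembles $W'$ from $[0,3]\times N$ together with $[0,1]\times D(K)$ and $[2,3]\times D(0)$, which produces a third boundary component diffeomorphic to $S^3$; this makes the metric construction and the Pontryagin-form localisation completely explicit at the cost of an extra (harmless) boundary term.

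Two corrections. First, $Y(0)$ is \emph{not} a rational homology sphere---it has $b_1=1$---so your stated reason for $\sigma(W)=0$ is wrong even though the conclusion is right; argue instead that the 2-handle is attached with framing $0$, so the generator of $H_2(W)$ has self-intersection $0$ (the paper uses the even simpler observation that $H^2(W',\partial W';\mathbb{R})\to H^2(W';\mathbb{R})$ is zero). Second, your alternative Mazzeo--Melrose route is genuinely more delicate here than in Section~\ref{S:dirac}: there the twisted Dirac operator on $T$ was arranged to be invertible, but the tangential odd signature operator on a flat $T^2$ has nontrivial kernel coming from harmonic forms, so the splitting formula does not apply cleanly and you would need to track scattering contributions. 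This is presumably why the paper avoids that route for $\eta_{\sign}$ and uses the direct APS argument instead.
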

\begin{proof}
This will follow from an excision argument for $\eta_{\sign}(M)$. We will focus on the second equality since the first one is similar. To keep better track of notation, we will denote by 
$$
f_{1}:\partial D(K)\rightarrow T=\partial N\quad \text{and}\quad f_{0}:\partial D(0)\rightarrow T=\partial N
$$
the gluing maps for $Y$ and $Y(0)$, and denote by $g(T)$ the restriction of the metric $g(N)$ to $T$. We form a Riemannian 4-manifold with corners,
$$
W=([0,3]\times [-L,L]\times T)\cup ([0,1]\times D(K)) \cup ([2,3]\times D(0)),
$$
where $[0,1]\times D(K)$ is glued to $[0,3]\times [-L,L]\times T$ via the map
$$
\text{id}_{[0,1]}\times f_{1}: [0,1]\times \partial D(K)\rightarrow [0,1]\times \{L\}\times T,
$$
and $[2,3]\times D(0)$ is glued to $[0,3]\times [-L,L]\times T$ via the map 
$$
\text{id}_{[2,3]}\times f_{0}: [2,3]\times \partial D(K)\rightarrow [2,3]\times \{L\}\times T.
$$
A schematic picture of this region is depicted in Figure \ref{F:eta}.
 
\bigskip
 
\begin{figure}[ht!]
\labellist
\scriptsize\hair 2pt
 \pinlabel {$0$} [ ] at 63 -18
 \pinlabel {$1$} [ ] at 212 -18
\pinlabel {$2$} [ ] at 365 -18
 \pinlabel {$3$} [ ] at 507 -18
 \pinlabel {$\,[0,1] \times\, D(K)$} [ ] at 181 118
 \pinlabel {$[2,3] \times\; D(0)$} [ ] at 475 118
 \pinlabel {$T\times [-L,L]$} [ ] at -23 5

\endlabellist
\centering
\includegraphics[scale=0.5]{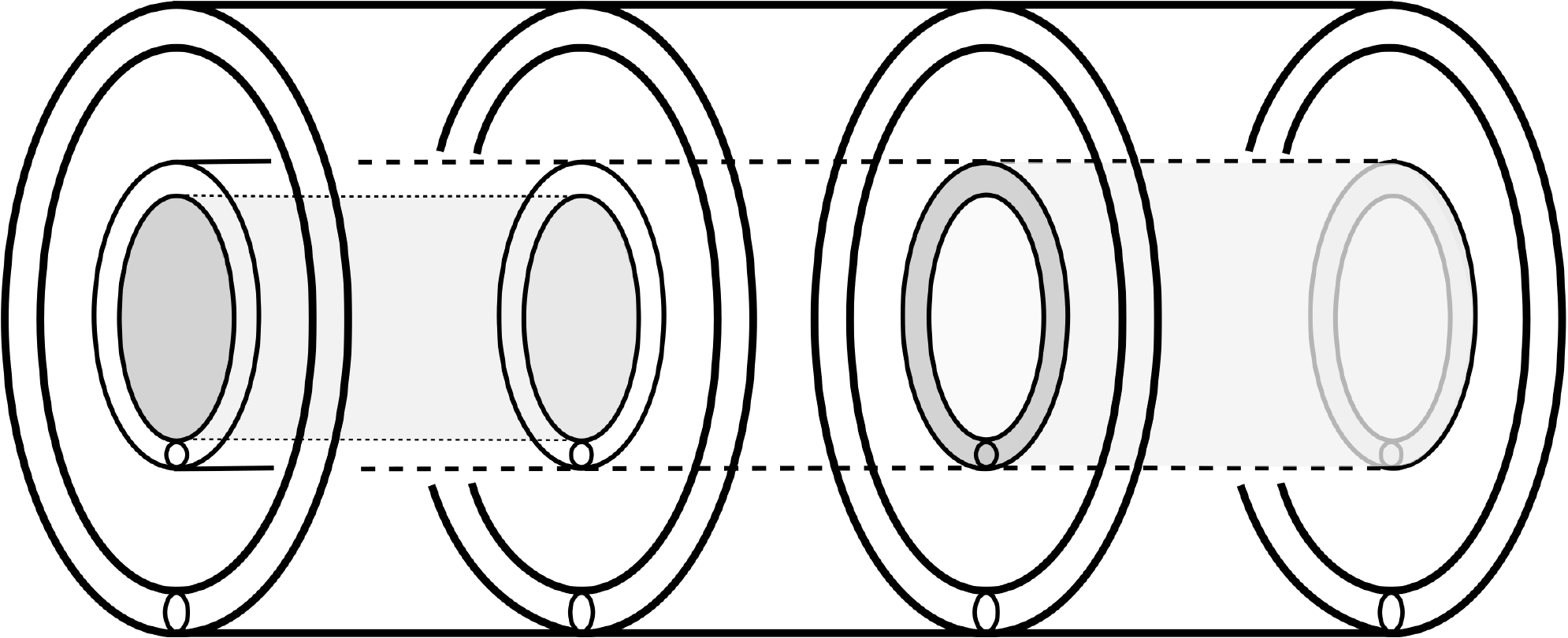}
\caption{$g(W)$}
\label{F:eta}
\end{figure}

The metric $g(W)$ on $W$ is obtained by gluing together the product metrics $[0,3]\times [-L,L]\times g(T)$, $[0,1]\times g_{u,v}$, and $[2,3]\times g_{v,u}$. %Topologically, $W$ is the product manifold $[0,1]\times (D(K)\cup D(0))$. 
The connected component 
$$
(\{1\}\times D(K))\cup ([1,2]\times\{L\}\times  T)\cup (\{2\}\times D(0))
$$
of the boundary of $W$ is the sphere $S^3$ with a non-smooth metric. We modify the metric $g(W)$ in a collar neighborhood of this $S^3$ to obtain a new metric $g'(W)$ that equals the product metric with the standard smooth round metric  $g(S^{3})$. Next, we form the manifold 
$
W'=W\cup ([0,3]\times N)
$
by gluing the two pieces together via the map
$$
\text{id}_{[0,3]\times T}: [0,3]\times \{-L\}\times  T\rightarrow [0,3]\times \partial N.  
$$
The metric $g(W')$ is obtained by gluing together $g'(W)$ and the product metric $[0,3]\times g(N)$. Note that $(W',g(W'))$ is a smooth Riemannian manifold (without corners). It has three boundary components, $(Y,g(Y)_{L})$, $(Y(0),g(Y(0))_{L})$, and $(S^3,g(S^3))$. The signature of $W'$ is zero because the map $H^{2}(W',\partial W'; \mathbb{R})\rightarrow H^{2}(W'; \mathbb{R})$ is trivial. The eta-invariant of $S^3$ vanished because $(S^3,g(S^3))$ admits an orientation-reversing isometry. The Pontryagin form on $[0,3]\times N$ also vanishes with respect to the product metric. Now, using the Atiyah--Patodi--Singer index theorem \cite{APS:I}, we obtain the equality
\[
\eta_{\sign}(Y)-\eta_{\sign}(Y(0))=-\frac{1}{3}\int_{W'}p_{1}(g'(W))  
\]

\smallskip\noindent
whose right hand side is a constant independent of $N$. To compute this constant, we can set $N=D^{2}\times S^{1}$ and $g(N)=g_{u,v}$. Then both $(Y,g(Y)_{L})$ and $(Y(0),g(Y(0))_{L})$ admit orientation reversing isometries hence their eta-invariants vanish. Thus we conclude that, for any $N$, the equality 
$\eta_{\sign}(Y)-\eta_{\sign}(Y(0))=0$ holds.
\end{proof}

\begin{lem}\label{rho}
One has the following identity
\[
\eta_{\sign} (\Sigma(0)) = n\cdot \eta_{\sign} (Y(0)) - \sum_{m=1}^{n-1}\; \sign_{m/n}(K).
\]
\end{lem}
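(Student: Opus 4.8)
The plan is to recognize the left-hand side as a sum of Atiyah--Patodi--Singer $\rho$-invariants of $Y(0)$ and to evaluate each of these as a Tristram--Levine signature via the $G$-signature theorem; this is the general-$n$ analogue of the corresponding computation in \cite{LRS2}.

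\textbf{Step 1: from $\eta_{\sign}$ to $\rho$-invariants.} First I would verify that $\Sigma(0)\to Y(0)$ is a \emph{free} regular $\Z/n$-cover. Away from the surgery solid tori it is the unbranched cover $N_n\to N$ of \eqref{E:dec2}, and over $D(0)$ it unwinds the core circle $n$ times: the meridian $m$ of $K$ (a generator of the quotient $H_1(N)\to\Z/n$) becomes the longitude of the surgery torus, while the gluing curve $\ell$ maps to $0$, so the meridional disks of $D(0)$ lift. Since $H_1(Y(0);\Z)=\Z$ is generated by the meridian $\mu$ of $K$, this cover corresponds to the character $\alpha\colon H_1(Y(0))\to\Z/n\hookrightarrow U(1)$ with $\alpha(\mu)=e^{2\pi i/n}$. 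The behavior of the odd signature operator of $(Y(0),g(Y(0))_L)$ under a finite regular cover (Atiyah--Patodi--Singer \cite{APS:I}) then gives
\[
\eta_{\sign}(\Sigma(0))\;=\;\sum_{m=0}^{n-1}\eta_{\sign,\alpha^m}(Y(0)),
\]
where $\eta_{\sign,\alpha^m}(Y(0))$ is the eta-invariant of the signature operator twisted by the flat line bundle $\alpha^m$, and the $m=0$ term is $\eta_{\sign}(Y(0))$. Subtracting $n$ copies of the $m=0$ term reduces the lemma to the assertion $\rho_{\alpha^m}(Y(0))=-\sign_{m/n}(K)$ for each $m$, where $\rho_{\alpha^m}(Y(0)):=\eta_{\sign,\alpha^m}(Y(0))-\eta_{\sign}(Y(0))$. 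Since these $\rho$-invariants of the signature operator are metric-independent, the stretched metrics $g(\ast)_L$ play no role in the resulting topological identity.

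\textbf{Step 2: evaluating $\rho_{\alpha^m}(Y(0))$.} Choose a compact $4$-manifold $P$ with $\partial P=Y$, push the interior of a Seifert surface $F\subset Y$ for $K$ into $P$, and set $W=P\cup h$, where $h$ is a $2$-handle attached to $K\subset\partial P$ with framing $0$; then $\partial W=Y(0)$, and $F$ together with the core disk of $h$ closes up to a surface $\widehat F\subset\operatorname{Int}W$ of self-intersection zero (the $0$-framing). Because $H_1(W\setminus\widehat F;\Z)=\Z$ is generated by a meridian of $\widehat F$, the character $\alpha$ extends over $W\setminus\widehat F$, producing an $n$-fold cyclic cover $\widetilde W\to W$ branched along $\widehat F$ and restricting to $\Sigma(0)\to Y(0)$ on the boundary. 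Applying the equivariant Atiyah--Patodi--Singer index theorem, respectively Hirzebruch's $G$-signature theorem, to the $\Z/n$-manifold $\widetilde W$ and collecting the contributions of the fixed surface yields
\[
\rho_{\alpha^m}(Y(0))\;=\;\sign_{\alpha^m}(\widetilde W)-\sign(W),
\]
the difference of the signature of the $\alpha^m$-eigenspace of the $\Z/n$-action on $H^2(\widetilde W;\C)$ and the ordinary signature of $W$. Since $\widehat F$ has trivial normal bundle, this $G$-signature defect is computed directly from a Seifert matrix $V$ of $K$, namely as the signature of $(1-e^{2\pi i m/n})V+(1-e^{-2\pi i m/n})V^{T}$, exactly as in the classical work of Viro, Casson--Gordon and Litherland, so it equals $-\sign_{m/n}(K)$ in the conventions fixed above. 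Summing over $1\le m\le n-1$ proves the lemma.

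\textbf{Main obstacle.} The principal difficulty is the sign and normalization bookkeeping in Step 2: one must track orientations and homology orientations so that the $G$-signature defect lands on $-\sign_{m/n}(K)$ rather than $+\sign_{m/n}(K)$, and one must confirm from the surgery conventions of \eqref{E:dec2} both that the cover is free and that $[\widehat F]^2=0$ (a nonzero framing would contribute an extra term to the defect). Useful consistency checks are the symmetry $\sign_{m/n}(K)=\sign_{(n-m)/n}(K)$, the fact that $\rho_{\alpha^m}(Y(0))+\rho_{\alpha^{n-m}}(Y(0))$ must be real, and agreement with the $n=2$ case treated in \cite{LRS2}.
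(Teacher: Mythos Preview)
Your proposal is correct and follows essentially the same route as the paper: decompose $\eta_{\sign}(\Sigma(0))$ into twisted $\eta$-invariants over the free $\Z/n$-cover $\Sigma(0)\to Y(0)$, rewrite the nontrivial summands as $\rho$-invariants, and identify each $\rho_{\alpha^m}(Y(0))$ with a Tristram--Levine signature. The only difference is that the paper simply cites Gilmer \cite[Theorem 3.6]{gilmer:config} for this last identification (noting that Gilmer's argument for $Y=S^3$ carries over to any integral homology sphere), whereas your Step~2 sketches the underlying $G$-signature/branched-cover argument directly; your sign convention for $\rho$ is opposite to the paper's but consistently so, and your observation that $[\widehat F]^2=0$ kills the fixed-surface defect is exactly the point that makes the formula clean.
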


\begin{proof}
%Since $\Omega_3(\Z) = \Omega_3 \oplus \Omega_2 = 0$, we know that $Y(0) = \partial X$ where the isomorphism $H_1(Y(0)) \to \Z$ extends to $H_1(X)$.  Then the $n$-fold covering $\Sigma(0) \to Y(0)$ extends to a cyclic covering $X_n \to X$. Extending the metric on $Y(0)$ to one on $X$ that is a product near the boundary, we have
%\begin{align*} 
%\sign(X) &= \frac13 \int_X p_1 - \etasig(Y(0))\\
%\sign(X_n) &= \frac{n}3 \int_X p_1 - \etasig(\Sigma(0)).
%\end{align*} 
The manifold $\Sigma(0)$ is a regular (unbranched) $n$-fold cyclic cover of $Y(0)$. By splitting the forms on $\Sigma(0)$ into eigenspaces for the induced action of $\tau$, we obtain the relation
$$
\etasig(\Sigma(0)) = \etasig(Y(0)) + \sum_{\alpha \neq 1} \; \etasiga(Y(0)).
$$
Since $\rho_\alpha(Y(0)) = \etasig(Y(0)) - \etasiga(Y(0))$, this immediately implies that
$$
\etasig(\Sigma(0)) = n \cdot \etasig(Y(0)) - \sum_{\alpha \neq 1}\; \rho_\alpha(Y(0)).
$$
%This relation can also be proved by extending the covering $\Sigma(0) \to Y(0)$ over a 4-manifold and applying APS.
Now, if $\alpha$ sends the meridian of $K$ to $m \in \Z/n$, it follows from Gilmer~\cite[Theorem 3.6]{gilmer:config} that $\rho_{\alpha}(Y(0)) = \sign_{m/n}(K)$. Gilmer only states his result for $Y = S^3$ but his method will work more generally for any integral homology sphere.
\end{proof}

\begin{cor}\label{signature-eta compare}
The eta-invariants of $\Sigma$ and $Y$ are related by the formula
\[
\eta_{\sign}(\Sigma) = n\cdot \eta_{\sign}(Y) - \sum_{m=1}^{n-1}\; \sign_{m/n}(K). 
\]
\end{cor}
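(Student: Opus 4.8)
The proof is essentially an immediate assembly of the preceding two lemmas, so the plan is short. First I would recall the two ingredients proved just above: the excision identities $\eta_{\sign}(\Sigma) = \eta_{\sign}(\Sigma(0))$ and $\eta_{\sign}(Y) = \eta_{\sign}(Y(0))$, together with Lemma \ref{rho}, which computes $\eta_{\sign}(\Sigma(0))$ in terms of $\eta_{\sign}(Y(0))$ and the Tristram--Levine signatures $\sign_{m/n}(K)$. The entire corollary is then obtained by substituting the surgery-invariance identities into the statement of Lemma \ref{rho}.

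Concretely, I would write: starting from Lemma \ref{rho},
\[
\eta_{\sign}(\Sigma(0)) = n\cdot \eta_{\sign}(Y(0)) - \sum_{m=1}^{n-1}\; \sign_{m/n}(K),
\]
and then replace $\eta_{\sign}(\Sigma(0))$ by $\eta_{\sign}(\Sigma)$ and $\eta_{\sign}(Y(0))$ by $\eta_{\sign}(Y)$ using the preceding lemma. This yields
\[
\eta_{\sign}(\Sigma) = n\cdot \eta_{\sign}(Y) - \sum_{m=1}^{n-1}\; \sign_{m/n}(K),
\]
which is exactly the asserted formula. One should note that all four manifolds $Y$, $\Sigma$, $Y(0)$, $\Sigma(0)$ are being equipped with the long-neck metrics $g(*)_L$ and the spin structures fixed at the beginning of Section \ref{S:dirac}, and that the decompositions $Y = N \cup D(K)$, $Y(0) = N \cup D(0)$ are the ones used throughout, so no compatibility issue arises when the identities are chained together.

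Since this is purely algebraic substitution of two established identities, there is no real obstacle here — the substantive work was done in proving the surgery invariance of $\eta_{\sign}$ (the excision/Atiyah--Patodi--Singer argument) and in identifying $\rho_\alpha(Y(0))$ with $\sign_{m/n}(K)$ via Gilmer's theorem. If anything merited a sentence of care, it would be confirming that the eigenspace-decomposition sign conventions used in Lemma \ref{rho} match those of the metric normalizations of Section \ref{S:sign}, but this is routine and already implicit in the setup.
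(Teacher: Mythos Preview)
Your proposal is correct and matches the paper's approach exactly: the corollary is stated without proof in the paper because it is an immediate combination of the two preceding lemmas, and your substitution argument is precisely that combination. There is nothing to add.
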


%%%%%%%%%%%%%%%%%%%%%%%%%%%%%%%%%%%%%%%%%%%%%%%%%%

\section{Proof of Theorem \ref{T:B}}\label{S:lsw}
Let $X$ be the mapping torus of $\tau: \Sigma \to \Sigma$. Since $\tau$ has finite order, $X$ admits an obvious circle action and the orbifold circle bundle $\pi: X \to Y^o$. Let $i\eta$ be the connection form of this bundle. Any orbifold metric $g(Y^o)_L$ on $Y^o$ as in Proposition \ref{gluing2} defines a metric $g = \eta^2 + \pi^*g(Y^o)_L$ on $X$. Similarly, a perturbation form $\omega^o_{\delta} - d\alpha$ as in Proposition \ref{gluing2} lifts to a $\tau$--invariant form $\omega$ on $\Sigma$. Since $\Sigma$ is a rational homology sphere, we can write $\omega = d\gamma$ for some 1-form $\gamma$ on $\Sigma$
and let $\beta = \pi^*\gamma$. With respect to the metric $g$ and perturbation $\beta$, we conclude as in Baldridge \cite{Baldridge} that
\[
\#\M^{\irr} (X,g,\beta) = \sum_{k=0}^{n-1} \; \#\M^{\irr} (Y^o,\s_k,g(Y^o)_L,\omega^o_{\delta} - d\alpha).
\]

\begin{rmk}
A fine point in this kind of argument is the discrepancy between the spinorial connection and the Levi--Civita connection associated with the metric. This discrepancy does not arise in our case because the two Dirac operators in question differ by the Clifford multiplication by the 3-form $\eta \wedge d\eta$ (see Baldridge \cite[Lemma 17]{Baldridge}) which vanishes because $\eta$ is a flat connection.
\end{rmk}

\begin{rmk}
It follows from Baldridge \cite{Baldridge} that the irreducible part of the Seiberg--Witten moduli space on $X$ is non-degenerate. The definition of $\lsw(X)$ in \cite{MRS} further requires that the (perturbed) blown up Seiberg--Witten moduli space on $X$ be free of reducibles. That this is the case for our mapping torus $X$ follows by a simple Fourier analysis argument from a similar property of the blown up Seiberg--Witten moduli space on $Y^o$.
\end{rmk}

\noindent
Proposition \ref{orbifold versus manifold} now implies that
\begin{multline}\label{seiberg-witten count}
\qquad \#\M^{\irr} (X,g,\beta) = \\ n\, \#\M^{\irr}(Y,g_L,\beta_{\delta})\,  -\, \sum_{k=0}^{n-1}\; \SF (\D_{\theta(N^*,\,1-\delta)}, \D_{\theta(N^*,\, 2k/n+1-\delta)}).\qquad
\end{multline}
 The correction terms of Lim \cite{Lim2} and Mrowka--Ruberman--Saveliev \cite{MRS} in the product end case are, respectively, 
\[
c(Y,g_L,\beta_{\delta}) = \frac{1}{2}\,\eta(\D_{\theta(Y)}) + \frac{1}{8}\,\eta_{\text{sign}}(Y)
\]
and
\[
w(X,g,\beta) = \frac{1}{2}\,\eta(\D_{\theta(\Sigma)}) + \frac{1}{8}\,\eta_{\text{sign}}(\Sigma).
\]

\medskip\noindent
It follows from Corollary \ref{dirac eta compare 3} and Corollary \ref{signature-eta compare} that there is a constant $C$, which is independent of $Y,\Sigma,N,K,L$ but may depend on $\delta$ such that 
\begin{multline}\label{E:w}
\qquad w(X,g,\beta) = n\cdot c(Y,g_L,\beta_{\delta}) + \sum_{k=0}^{n-1}\; \SF (\D_{\theta(N^*,\,1-\delta)}, \D_{\theta(N^*,\, 2k/n+1-\delta)}) \\ -\,\frac 1 8\,\sum_{k=0}^{n-1}\; \sign_{k/n} (K) + C + o(1).\quad
\end{multline}

\smallskip\noindent
Combining formulas \eqref{seiberg-witten count} and \eqref{E:w} with the Lim formula \cite{Lim2} for the Casson invariant, $-\lambda (Y) = \#\M^{\irr} (Y,g_L,\beta_{\delta}) + c(Y,g_L,\beta_{\delta})$, we obtain\footnote{Our orientation conventions differ from those of Lim \cite{Lim1}, hence the Casson invariant $\lambda(Y)$ shows up with the negative sign. This is the same issue we dealt in \cite[Section 7]{RS:jktr}.}
\begin{multline}\notag
\lsw(X) = \# \M^{\irr}(X,g,\beta) + w (X,g,\beta) \\ = -n\cdot \lambda (Y) - \frac 1 8\cdot \sum_m\; \sign_{m/n} (K) + C + o(1).
\end{multline}
Since $\lsw(X)$, $\lambda(Y)$, $\sign_{k/n} (K)$ are metric independent and $C$ is independent of $L$, by passing to the limit as $L\rightarrow \infty$, we obtain the formula
\[
\lsw (X) = -n\cdot \lambda(Y) - \frac 1 8\cdot \sum_m\;\sign_{m/n} (K) + C,
\]
which further implies that $C$ is a truly universal constant (independent, in particular, of $Y$, $K$, and $\delta$). By applying this formula to $Y = S^3$ and an unknot $K$, we conclude that $C = 0$. This finishes the proof.

%%%%%%%%%%%%%%%%%%%%%%%%%%%%%%%%%%%%%%%%%%%%%%%%%%

\section{Computing the Furuta--Ohta invariant}\label{S:lfo}
Let as before $K$ be a knot in an integral homology sphere $Y$, and $\Sigma$ the $n$--fold cyclic cover of $Y$ with branch set $K$ for $n \ge 2$. We continue to assume that $\Sigma$ is a rational homology sphere. It comes equipped with the covering translation $\tau: \Sigma \to \Sigma$ which is an orientation preserving diffeomorphism of order $n$. The mapping torus of $\tau$ is the smooth 4-manifold $X = ([0,1]\times\Sigma)\,/\,(0,x)\sim (1,\tau(x))$ with the product orientation. 

\begin{pro}
The manifold $X$ has the integral homology of $S^1 \times S^3$.
\end{pro}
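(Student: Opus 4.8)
The plan is to compute the integral homology of $X$ directly using the structure of $X$ as a mapping torus, via the Wang exact sequence. Recall that for the mapping torus $X$ of a diffeomorphism $\tau:\Sigma\to\Sigma$, there is a long exact sequence
\[
\cdots \longrightarrow H_i(\Sigma;\Z) \xrightarrow{\;\tau_* - \id\;} H_i(\Sigma;\Z) \longrightarrow H_i(X;\Z) \longrightarrow H_{i-1}(\Sigma;\Z) \xrightarrow{\;\tau_* - \id\;} \cdots
\]
So the computation reduces to understanding the map $\tau_* - \id$ on $H_*(\Sigma;\Z)$. Since $\Sigma$ is a rational homology sphere, $H_0(\Sigma;\Z)=H_3(\Sigma;\Z)=\Z$ and $H_1(\Sigma;\Z)$ is a finite abelian group (with $H_2(\Sigma;\Z)=0$ by Poincar\'e duality and the universal coefficient theorem). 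On $H_0$ and $H_3$ the map $\tau_*$ is the identity (it is orientation preserving), so $\tau_*-\id = 0$ in those degrees, each contributing a $\Z$ to the appropriate homology group of $X$.

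The key step is therefore to show that $\tau_* - \id$ is an \emph{isomorphism} on $H_1(\Sigma;\Z)$. For this I would use the branched covering structure: $\Sigma \to Y$ is the $n$-fold cyclic branched cover of the integral homology sphere $Y$ with branch set the knot $K$, and $H_1(\Sigma;\Z)$ is computed from the Alexander module of $K$. Concretely, if $\Delta_K(t)$ denotes the Alexander polynomial of $K$ (normalized so $\Delta_K(1)=\pm 1$, which holds since $Y$ is a homology sphere), then $H_1(\Sigma;\Z)\cong \Z[t]/(t^n-1, \Delta_K(t))$ as a $\Z[t,t^{-1}]$-module, with $t$ acting as the deck transformation $\tau_*$; equivalently $|H_1(\Sigma;\Z)| = \prod_{j=1}^{n-1}|\Delta_K(\zeta^j)|$ where $\zeta = e^{2\pi i/n}$, and this order is finite precisely under our standing hypothesis that $\Sigma$ is a rational homology sphere (automatic when $n$ is a prime power, and assumed throughout). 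Then $\tau_* - \id$ acts as multiplication by $t - 1$ on $\Z[t]/(t^n-1,\Delta_K(t))$. One checks that $t-1$ is invertible in this ring: its inverse exists iff $\gcd(t-1, t^n-1, \Delta_K(t)) = 1$ in the relevant sense, and evaluating, $t-1$ and $\Delta_K(t)$ generate the unit ideal after quotienting because $\Delta_K(1) = \pm 1$ is a unit. Hence $\tau_*-\id$ is an automorphism of the finite group $H_1(\Sigma;\Z)$, so its cokernel and kernel both vanish.

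Feeding this back into the Wang sequence: in degrees $i=1$ and $i=2$ we get $\coker(\tau_*-\id\ \text{on } H_1)=0$ and $\ker(\tau_*-\id\ \text{on }H_1)=0$ together with $H_2(\Sigma)=0$, so $H_1(X;\Z)=\Z$ (coming from $H_0(\Sigma)$, via the $H_1(X)\to H_0(\Sigma)$ tail) and $H_2(X;\Z)=0$; in degree $i=3$ we get $H_3(X;\Z)=\Z$ from $H_3(\Sigma)$, and in degree $i=4$, $H_4(X;\Z)=\Z$ from the tail $H_4(X)\to H_3(\Sigma)\xrightarrow{0}H_3(\Sigma)$. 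This matches $H_*(S^1\times S^3;\Z)$ exactly. The main obstacle is the purely algebraic verification that $\tau_*-\id$ is invertible on $H_1(\Sigma;\Z)$; the cleanest route is the Alexander-module description above combined with $\Delta_K(1)=\pm1$, but one should take care to justify the identification of the $\Z[t,t^{-1}]$-module structure on $H_1(\Sigma;\Z)$ and to remark that the finiteness of $H_1(\Sigma;\Z)$ is exactly the rational homology sphere hypothesis. Finally, I would note that the second condition in \eqref{E:zz}, namely $H_*(\tilde X;\Q) = H_*(S^3;\Q)$, follows because the universal abelian cover $\tilde X$ is the infinite cyclic cover of the mapping torus, which is diffeomorphic to $\R\times\Sigma$, and $\Sigma$ is a rational homology sphere.
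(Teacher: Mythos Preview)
Your overall strategy matches the paper's exactly: both use the Wang exact sequence of the mapping torus and reduce the computation to showing that $\tau_*-\id$ is an automorphism of the finite group $H_1(\Sigma;\Z)$. The paper does this via the Seifert--matrix presentations
\[
H_1(N_\infty)=\bigl(\Z[t,t^{-1}]\bigr)^{2h}/(V^\top-tV),\qquad
H_1(\Sigma)=\bigl(\Z[\Z/n]\bigr)^{2h}/(V^\top-tV),
\]
quotes Levine's theorem that $t-1$ acts invertibly on $H_1(N_\infty)$, and then pushes this down through the surjection $\Z[t,t^{-1}]\to\Z[\Z/n]$.

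There is, however, a genuine error in your version of the key step. The isomorphism you assert,
\[
H_1(\Sigma;\Z)\;\cong\;\Z[t]/(t^n-1,\Delta_K(t)),
\]
is false in general; it holds only when the Alexander module is cyclic. For a concrete counterexample take $K$ to be the granny knot (the connected sum of two right--handed trefoils) and $n=2$. Then $\Delta_K(t)=(t^2-t+1)^2$, the double branched cover is $L(3,1)\,\#\,L(3,1)$ with $H_1(\Sigma_2)\cong\Z/3\oplus\Z/3$, whereas $\Z[t]/(t^2-1,(t^2-t+1)^2)\cong\Z/9$. The two groups have the same order but are not isomorphic.

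The fix is small and preserves the spirit of your argument. What is true is that $H_1(\Sigma;\Z)$ is a \emph{module} over the ring $R=\Z[t]/(t^n-1,\Delta_K(t))$: the relation $t^n=1$ holds because $\tau^n=\id$, and $\Delta_K(t)$ annihilates $H_1(\Sigma)$ because $\Delta_K(t)=\det(V^\top-tV)$ annihilates the Alexander module $H_1(N_\infty)$ (adjugate formula) and $H_1(\Sigma)$ is a quotient of $H_1(N_\infty)$. Your computation that $t-1$ is a unit in $R$ (since $\Delta_K(1)=\pm1$) is correct, and a unit of $R$ acts invertibly on every $R$--module. With this correction your argument goes through, and is essentially a repackaging of the paper's proof with the Levine citation replaced by the explicit observation $\Delta_K(1)=\pm1$. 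Your additional remark about $\tilde X\cong\mathbb R\times\Sigma$ verifying the second condition of \eqref{E:zz} is correct and is handled by the paper in the paragraph following the proposition.
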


\begin{proof}
Let $N$ be the knot $K$ exterior, $N_n \to N$ the $n$--fold cyclic cover of $N$, and $N_{\infty} \to N$ its infinite cyclic cover. %Note that $H_1 (N_n) = N_1 (\Sigma)\,\oplus\,\Z$, where $\Z$ is generated by a lift of the meridian. 
Denote by $\tau: N_{\infty} \to N_{\infty}$ the covering translation; it descends to the covering translations $\tau: N_n \to N_n$ and $\tau: \Sigma \to \Sigma$. It follows from the Wang exact sequence 
\[
\xymatrix{
\ldots \ar[r] & H_1(\Sigma) \ar[r]^(.5){\tau_*-1} &H_1(\Sigma) \ar[r] & H_1(X) \ar[r] & H_0(\Sigma) = \Z \ar[r]^(.5){\tau_*-1} &H_0(\Sigma) = \Z
}
\]
that all we need to prove is to show that $\tau_*-1:  H_1(\Sigma) \to  H_1(\Sigma) $ is surjective. To this end, choose a Seifert surface for $K$ and a basis for its first homology yielding the $2h \times 2h$ Seifert matrix $V$.  Then we have the following presentations~\cite[Theorem 8.8 and Proposition 8.20]{burde-zieschang:knots} for the first homology of $N_{\infty}$ and $\Sigma$ as modules over, respectively, $\Z[t,t^{-1}]$ and $\Z[\Z/n]$:
\begin{align*}
H_1(N_{\infty}) = & \left(\Z[t,t^{-1}]\right)^{2h}/(V^{\top} - t V), \\
H_1(\Sigma) = & \left(\Z[\Z/n]\right)^{2h}/(V^{\top} - t V).
\end{align*}
It is a standard fact \cite{levine:modules} that $\tau_* -1: H_1(N_{\infty}) \to H_1(N_{\infty})$ is an isomorphism. Writing $\pi$ for the projection homomorphism 
\[
\pi: \Z[t,t^{-1}] \longrightarrow  \Z[t,t^{-1}]/(t^n-1) = \Z[\Z/n],
\]
we obtain a commutative diagram
\[
\xymatrix{
\left(\Z[t,t^{-1}]\right)^{2h} \ar[rr]^{V^{\top} - t V}\ar[d]^{\pi} &&  \left(\Z[t,t^{-1}]\right)^{2h}\ar[d]^{\pi} \\
\left(\Z[\Z/n]\right)^{2h} \ar[rr]^{V^{\top} - t V} && \left(\Z[\Z/n]\right)^{2h}
}
\]
which readily implies that the map $\tau_* -1:  H_1(\Sigma) \to  H_1(\Sigma) $ is surjective.
%Let $k$ be a knot in $S^3$. Its $n$--twist spun knot is a 2--knot in $S^4$ whose exterior was described by Zeeman \cite[Main Theorem]{Z} as the fiber bundle over a circle whose fiber is the punctured $\Sigma$ and whose monodromy is $\tau$. One can easily see from that description that $X$ is obtained from $S^4$ by surgery along the $n$--twist spun knot and is therefore an integral homology $S^1 \times S^3$. A modification of Zeeman's argument proves the claimed result for knots in an arbitrary integral homology sphere $Y$ (this may need an explanation)
\end{proof}

\begin{cor}\label{C:fix}
The automorphism $\tau_*: H_1 (\Sigma) \to H_1 (\Sigma)$ has zero fixed point set.
\end{cor}

\begin{rmk}\label{R:unique}
The manifold $X$ has a unique $\spinc$ structure. Its restriction to $\Sigma$ is the unique $\spinc$ structure $\s$ with the property $\tau_*(\s) = \s$ which appears in the statement of Theorem \ref{T:A}. This $\spinc$ structure is in fact a spin structure, obtained by restricting to $\Sigma$ either of the two distinct  spin structures on $X$; see the discussion at the beginning of Section \ref{S:dirac}.
\end{rmk}

The infinite cyclic cover $\tilde X \to X$, which is just the product $\mathbb R \times \Sigma$, has the rational homology of $S^3$, hence the conditions \eqref{E:zz} are satisfied. According to \cite[Section 7.1]{LRS2}, the manifold $X$ has a well-defined Furuta--Ohta invariant,
\begin{equation}\label{E:lfo}
\lfo (X)\; = \; \frac 1 4\, \#\M^* (X)\, \in\, \Q,
\end{equation}
where $\#\M^* (X)$ stands for the signed count of points in the (possibly perturbed) moduli space $\M^* (X)$ of irreducible ASD connections in a trivial $SU(2)$--bundle $E \to X$, with the signs determined by a choice of orientation and homology orientation on $X$. In this section, we will prove the following formula for this invariant.

\begin{thm}\label{T:main}
Let $\lambda (Y)$ be the Casson invariant of $Y$, and denote by $\sign_{m/n} (K)$ the Tristram--Levine equivariant signatures of the knot $K$. Then
\[
\lfo (X)\; =\; n\cdot\lambda(Y)\; + \; \frac 1 8\,\cdot  \sum_{m = 1}^{n-1}\; \sign_{m/n} (K). 
\]
\end{thm}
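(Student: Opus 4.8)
\textbf{Proof proposal for Theorem \ref{T:main}.}
The plan is to compute $\lfo(X)$ via an equivariant count of flat $SU(2)$ connections on $\Sigma$, mirroring the structure of the proof of Theorem \ref{T:B} but in Donaldson theory rather than Seiberg--Witten theory. Since $\tilde X = \mathbb R \times \Sigma$ has the rational homology of $S^3$ and $X$ has $b^+=0$, the moduli space $\M^*(X)$ of irreducible ASD connections in the trivial $SU(2)$-bundle can, after a suitable $\tau$-equivariant perturbation, be identified with the fixed point set of the induced $\Z/n$-action on the (perturbed) space of irreducible flat connections on $\Sigma$ --- that is, with equivariant solutions on the mapping torus corresponding to $\tau$-invariant flat connections on $\Sigma$ whose holonomy is compatible with the finite-order structure. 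This is the Furuta--Ohta analogue of Baldridge's splitting; it was carried out for $n=2$ in \cite[Section 7]{LRS2} and for free actions in \cite{RS:jktr,RS2}, and the first step is to observe that the same argument applies verbatim for all $n\ge 2$ once one knows, by Corollary \ref{C:fix}, that $\tau_*$ has no nonzero fixed vectors on $H_1(\Sigma)$ (which rules out the reducibles that would otherwise obstruct the count).

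Next I would identify the equivariant flat connections on $\Sigma$ with flat connections on the orbifold $Y^o$ --- equivalently, with representations of $\pi_1(N)$ sending the meridian $m$ of $K$ to an element of order dividing $n$ --- and organize these by the ``rotation number'' $k$ of the meridian, exactly as the orbifold $\spinc$ structures $\s_k$ were organized in Section \ref{S:monopoles}. The signed count of each such stratum should then be expressed, via the pillowcase/Herald-type argument referenced after Theorem \ref{moduli space on N}, as an intersection number in the $SU(2)$ (or pillowcase) character variety of the boundary torus $T$, and summed over $k$. The correction terms (the $\rho$-invariant / spectral-flow shifts relating the count on $X$ to a count on $Y$ together with equivariant $\eta$-invariant contributions of the solid torus) are handled by the same surgery and Mazzeo--Melrose splitting techniques used in Sections \ref{S:dirac} and \ref{S:sign}; the signature-type contributions will again assemble into $\tfrac18\sum_{m=1}^{n-1}\sign_{m/n}(K)$ via Gilmer's formula, and the ``manifold part'' will assemble into $n\cdot\lambda(Y)$ via Taubes's identification of the Casson invariant with the count of flat connections on $Y$ (the Donaldson-theoretic counterpart of Lim's formula). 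As in the proof of Theorem \ref{T:B}, a universal additive constant is pinned down to be zero by testing on $Y=S^3$ with $K$ the unknot.

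The main obstacle I anticipate is the bookkeeping of \emph{orientations and signs} in the equivariant Donaldson count: one must verify that the signs with which fixed-point connections are counted in $\M^*(X)$ match the signs predicted by the Casson and Tristram--Levine terms, and in particular that the choice of homology orientation on $X$ is compatible with the orientation conventions for the Casson invariant of $Y$ and with Gilmer's sign conventions for $\sign_{m/n}(K)$. A secondary technical point is ensuring that the equivariant perturbations needed to make $\M^*(X)$ regular can be chosen without creating spurious fixed points or reducibles near the orbifold locus $\widetilde K \subset \Sigma$ --- this is the Donaldson-theoretic analogue of the curvature bound of Lemma \ref{lem: curvature bound}, and should follow because the branch locus contributes only representations with nontrivial meridional holonomy, which are automatically away from the reducible stratum. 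Given that \cite{LRS2} and \cite{RS2} already execute this program in the cases $n=2$ and for free actions respectively, I expect the proof to consist largely of checking that none of the steps degenerate for general $n$, with the sign analysis being the only place requiring genuinely new care.
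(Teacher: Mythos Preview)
Your broad outline---equivariant flat connections on $\Sigma$, then orbifold representations, then a Herald-type count---is the paper's route, but your execution diverges from it in one respect and has a genuine gap in another.

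The divergence: the paper's proof is purely algebraic/topological and uses no $\eta$--invariants, no Mazzeo--Melrose splitting, and no ``universal constant pinned down on the unknot.'' The Furuta--Ohta invariant is simply $\tfrac14\,\#\M^*(X)$ with \emph{no} correction term, so once one has an orientation-preserving bijection $\M^*(X)\leftrightarrow \R^V(Y,K)$ with the orbifold character variety, one writes $\R^V(Y,K)=\bigcup_{k=0}^{n}\S_{k/n}(N,SU(2))$ and applies Herald's formula \cite{H} directly to get $4n\,\lambda(Y)+\tfrac12\sum_m\sign_{m/n}(K)$; dividing by four is the whole computation. The analytic machinery you propose would amount to reproving Herald's theorem inside the argument and is not needed.

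The gap: you identify $\M^*(X)$ with the $\tau^*$-fixed \emph{irreducible} flat connections on $\Sigma$, and you read Corollary \ref{C:fix} as ``ruling out reducibles.'' Both are wrong. An irreducible flat connection on $X$ can restrict to an \emph{abelian} (non-central reducible) connection $\alpha$ on $\Sigma$; this occurs exactly when $n$ is even and $\tau^*\alpha=\alpha^{-1}$. Corollary \ref{C:fix} is used not to exclude reducibles but, via Lemma \ref{L:ab}, to show that the only $\tau^*$-fixed $U(1)$ character is trivial, which is what forces the characterization $\R^\tau_{\ab}(\Sigma)=\{\alpha:\tau^*\alpha=\alpha^{-1}\}$ of Proposition \ref{P:ab}. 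The restriction map $i^*:\M^*(X)\to\R^\tau(\Sigma)=\R^\tau_{\ab}(\Sigma)\sqcup\R^\tau_{\irr}(\Sigma)$ is then one-to-one over the abelian stratum and two-to-one over the irreducible stratum (Proposition \ref{P:one}), and the orbifold map $\R^V(Y,K)\to\R^\tau(\Sigma)$ has the \emph{same} multiplicities (the binary dihedral orbifold representations are exactly those pulling back to abelians). Matching these multiplicities, and extending the equivariant spectral-flow orientation to the abelian case via a constant-lift argument (Lemma \ref{L:lift}), is the main new content over \cite{RS2} and \cite{CS}; without it the bijection $\M^*(X)\leftrightarrow\R^V(Y,K)$ is not established. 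Your worry about perturbations near the orbifold locus is secondary: the relevant point (Section \ref{S:pert}) is that the orbifold representations pulling back to abelians on $\Sigma$ are nonetheless \emph{irreducible} as orbifold representations, so the admissible-perturbation lemma of \cite{CS} applies to them.
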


\noindent
This formula was proved in \cite{CS} and \cite{RS2} under the assumption $\Sigma$ is an integral homology sphere, and in \cite{LRS2} under the assumption that $n = 2$. Our proof here will rely on the extension of those techniques to the general case at hand.

%%%%%%%%%%%%%%%%%%%%%%%%%%%%%%%%%%%%%%%%%%%%%%

\subsection{Equivariant theory}\label{S:eq}
We will first describe $\M^*(X)$ in terms of $\R(\Sigma)$, the $SU(2)$--character variety of $\pi_1(\Sigma)$. To this end, consider the splitting
\[
\R(\Sigma)\; =\; \{\theta\}\, \sqcup\, \R_{\ab} (\Sigma)\, \sqcup\, \R_{\irr}(\Sigma),
\]
whose three components consist of the trivial representation $\theta$ and the conjugacy classes of abelian (that is, non-trivial reducible) and irreducible representations, respectively. This decomposition is preserved by the map $\tau^*: \R(\Sigma) \to \R (\Sigma)$ induced by the covering translation. Denote by $\R^{\tau} (\Sigma)$ the fixed point set of the map $\tau^*$ acting on $\R(\Sigma)\setminus \{\theta\} = \R_{\ab}(\Sigma)\, \sqcup\, \R_{\irr} (\Sigma)$ so that
\[
\R^{\tau} (\Sigma)\; =\; \R_{\ab}^{\tau} (\Sigma)\, \sqcup\, \R_{\irr}^{\tau} (\Sigma).
\]
The following algebraic lemma will allow us to obtain useful information about the action of $\tau^*$ on $\R_{\ab} (\Sigma)$.

\begin{lem}\label{L:ab}
Let $G$ be a finite abelian group and $s: G \to G$ an automorphism with zero fixed point set. Then the induced automorphism $s^*: \Hom (G,U(1)) \to \Hom (G,U(1))$ of the character group of $G$ has the trivial character as its only fixed point.
\end{lem}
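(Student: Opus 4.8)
\textbf{Proof plan for Lemma \ref{L:ab}.}
The plan is to dualize. A finite abelian group $G$ satisfies $G \cong \Hom(G,U(1))$ (non-canonically), but more to the point the character group $\widehat G = \Hom(G,U(1))$ is a covariant-contravariant functor: an automorphism $s\colon G\to G$ induces $s^*=\Hom(s,\id)\colon \widehat G\to \widehat G$, which is just precomposition with $s$. So a character $\chi\in\widehat G$ is fixed by $s^*$ precisely when $\chi\circ s=\chi$, i.e. $\chi$ is trivial on the subgroup $\{g\cdot s(g)^{-1} : g\in G\}$, equivalently on the image of $s-\id\colon G\to G$ (writing $G$ additively). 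Thus $\Fix(s^*)$ is canonically identified with $\Hom\big(G/\im(s-\id),\,U(1)\big)=\Hom\big(\coker(s-\id),U(1)\big)$.

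First I would reduce the statement to showing $\coker(s-\id)=0$. Since Pontryagin duality is exact on finite abelian groups, $\Hom(-,U(1))$ is faithful: a finite abelian group is trivial if and only if its character group is trivial. Hence $\Fix(s^*)$ is trivial if and only if $\coker(s-\id)$ is trivial. By hypothesis $\Fix(s)=\ker(s-\id)=0$, so $s-\id\colon G\to G$ is injective; since $G$ is finite, an injective endomorphism is automatically surjective, so $\coker(s-\id)=0$ as well. Combining, $\Fix(s^*)$ is the trivial group, which is exactly the assertion that the trivial character is its only fixed point.

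The only mild subtlety — and the step I would be most careful with — is keeping straight that $s^*$ denotes precomposition with $s$ (not with $s^{-1}$), and that $\ker(s-\id)=0$ genuinely forces $\im(s-\id)=G$: this uses finiteness of $G$ in an essential way and would fail for infinite abelian groups, which is why the finiteness hypothesis (here coming from $H_1(\Sigma)$ being finite, i.e. $\Sigma$ a rational homology sphere, together with Corollary \ref{C:fix}) cannot be dropped. Everything else is formal duality for finite abelian groups. Alternatively, one could avoid Pontryagin duality entirely: a fixed character $\chi$ of $s^*$ descends to $\coker(s-\id)$, which we have just shown is $0$, so $\chi$ is the trivial character — this gives the conclusion directly without invoking faithfulness of the duality functor.
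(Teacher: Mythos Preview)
Your argument is correct and is essentially the paper's own proof: both introduce the endomorphism $s-\id$ (the paper calls it $u$), observe it is injective by hypothesis and hence bijective by finiteness of $G$, and then dualize to conclude that $s^*$ has only the trivial fixed character. The paper phrases the last step slightly more directly---noting that $u^*(\chi)=s^*(\chi)\cdot\chi^{-1}$ and that $u^*$ is an isomorphism---whereas you route through the identification $\Fix(s^*)\cong\Hom(\coker(s-\id),U(1))$; your ``alternative'' at the end is in fact closest to the paper's formulation.
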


\begin{proof}
Let us consider the homomorphism $u: G \to G$ given by the formula $u (g) = s (g) - g$. Since $s$ has zero fixed point set, $u$ is injective and, since $G$ is finite, $u$ is an isomorphism. This implies that $u^*$ is also an isomorphism, which completes the proof because $u^*(\chi) = s^*(\chi) \cdot \chi^{-1}$ on characters $\chi: G \to U(1)$.
\end{proof}

The significance of the character group $\Hom (H_1(\Sigma),U(1))$ to us is that its quotient by the equivalence relation identifying $\alpha$ with $\alpha^{-1}$ is precisely $\R_{\ab} (\Sigma)$. We call a character $\alpha \in \Hom (H_1 (\Sigma), U(1))$ \emph{central} if the representation obtained by composing $\alpha$ with the inclusion of $U(1)$ in $SU(2)$ as a maximal torus is central; the latter simply means that the image of $\alpha$ is contained in $\{\pm 1\}\subset U(1)$.

\begin{pro}\label{P:ab}
The fixed point set $\R_{\ab}^{\tau} (\Sigma)$ consists of the equivalence classes of non-central characters $\alpha \in \Hom (H_1 (\Sigma), U(1))$ such that $\tau^*\alpha = \alpha^{-1}$. In particular, $\R_{\ab}^{\tau} (\Sigma)$ is empty for odd $n$, and $\R_{\ab}^{\tau} (\Sigma) = \R_{\ab} (\Sigma)$ for $n = 2$.
\end{pro}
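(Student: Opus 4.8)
The plan is to combine the description of $\R_{\ab}(\Sigma)$ as a quotient of the character group with the algebraic input from Lemma \ref{L:ab} and Corollary \ref{C:fix}. First I would recall that $\R_{\ab}(\Sigma)$ is, by definition, the quotient of $\Hom(H_1(\Sigma),U(1))\setminus\{\text{trivial}\}$ by the involution $\alpha\mapsto\alpha^{-1}$, and that the trivial character corresponds to $\theta$, which has been removed. A point $[\alpha]\in\R_{\ab}(\Sigma)$ is therefore fixed by $\tau^*$ precisely when $\tau^*\alpha=\alpha$ or $\tau^*\alpha=\alpha^{-1}$ as elements of $\Hom(H_1(\Sigma),U(1))$. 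Now apply Lemma \ref{L:ab} with $G=H_1(\Sigma)$ and $s=\tau_*$: by Corollary \ref{C:fix} the automorphism $\tau_*$ has zero fixed point set, so the only character fixed by $\tau^*$ is the trivial one. Hence the case $\tau^*\alpha=\alpha$ forces $\alpha$ trivial, which was excluded; this leaves exactly the condition $\tau^*\alpha=\alpha^{-1}$ with $\alpha$ non-trivial. To see that such $\alpha$ must in fact be non-central: if $\alpha$ were central, its image would lie in $\{\pm1\}$, so $\alpha^{-1}=\alpha$, and then $\tau^*\alpha=\alpha^{-1}=\alpha$ would again force $\alpha$ trivial by Lemma \ref{L:ab}. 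This gives the first sentence of the proposition.

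For the consequences, when $n$ is odd I would argue as follows. Suppose $\tau^*\alpha=\alpha^{-1}$. Since $\tau^n=\id$, applying $\tau^*$ repeatedly gives $(\tau^*)^n\alpha=\alpha^{(-1)^n}=\alpha^{-1}$ (as $n$ is odd), but also $(\tau^*)^n\alpha=\alpha$; hence $\alpha^2$ is trivial, i.e.\ $\alpha$ is central. Combined with the previous paragraph (central forces trivial forces excluded), this shows $\R_{\ab}^{\tau}(\Sigma)=\emptyset$ for odd $n$. When $n=2$ the covering translation satisfies $\tau_*^2=\id$ on $H_1(\Sigma)$, and zero fixed point set of $\tau_*$ means $\tau_*=-\id$ (an order-two automorphism with no fixed vectors over $\Z$ must be minus the identity on a group where $g\mapsto\tau_*g-g$ is bijective; concretely $\tau_*g-g$ injective with $\tau_*^2=\id$ gives $(\tau_*+1)(\tau_*-1)=0$ so $\im(\tau_*-1)\subseteq\ker(\tau_*+1)$, and since $\tau_*-1$ is onto, $\tau_*=-1$). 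Therefore $\tau^*\alpha=\alpha^{-1}$ automatically for every $\alpha$, so every non-trivial $\alpha$ contributes and $\R_{\ab}^{\tau}(\Sigma)=\R_{\ab}(\Sigma)$; note also that non-centrality is then not an extra restriction on the fixed set, since the central non-trivial characters simply are the order-two elements and they already satisfy $\tau^*\alpha=\alpha^{-1}$ trivially — but they were never in $\R_{\ab}(\Sigma)\setminus$ anything, they are genuine points of $\R_{\ab}(\Sigma)$. I should double-check the precise claim: the first sentence says $\R_{\ab}^\tau(\Sigma)$ consists of equivalence classes of \emph{non-central} $\alpha$ with $\tau^*\alpha=\alpha^{-1}$, so for $n=2$ one wants $\R_{\ab}(\Sigma)$ itself to consist only of non-central classes, which holds because any central non-trivial $\alpha$ would be a fixed vector of $\tau_*=-\id$ only if $\alpha=\alpha^{-1}$ is trivial — wait, that is about $\tau^*$, not about being a point of $\R_{\ab}$. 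The cleaner route: for $n=2$, $H_1(\Sigma)$ has odd order (it equals $|\det K|$ for a knot in a homology sphere, which is odd), so there are no order-two characters at all, hence no central non-trivial characters, and every element of $\R_{\ab}(\Sigma)$ is non-central and satisfies $\tau^*\alpha=\alpha^{-1}$.

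The main obstacle I anticipate is pinning down the $n=2$ statement cleanly: one must invoke that $H_1(\Sigma_2(K))$ has odd order (equivalently $\det K$ is odd) to rule out central characters, and one must correctly extract $\tau_*=-\id$ from "order two with zero fixed point set" using the surjectivity of $\tau_*-1$ established in the preceding proposition. Everything else is a short formal consequence of Lemma \ref{L:ab}. I would therefore structure the proof as: (i) translate "fixed in $\R_{\ab}$" into the two alternatives $\tau^*\alpha=\alpha^{\pm1}$; (ii) kill the $+$ alternative and centrality via Lemma \ref{L:ab}; (iii) the odd-$n$ case by the parity trick above; (iv) the $n=2$ case via $\tau_*=-\id$ and oddness of $|H_1|$.
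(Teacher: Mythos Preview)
Your proposal is correct and follows essentially the same route as the paper: translate the fixed-point condition into $\tau^*\alpha=\alpha^{\pm 1}$, eliminate the $+$ case and centrality via Lemma~\ref{L:ab}, handle odd $n$ by the parity trick $(\tau^*)^n\alpha=\alpha^{-1}=\alpha$, and for $n=2$ deduce that $\tau^*$ acts as inversion on characters. The only difference is cosmetic: the paper argues on the character side (for $n=2$, $\tau^*\chi\cdot\chi$ is $\tau^*$-fixed hence trivial, so $\tau^*\chi=\chi^{-1}$), whereas you work on the homology side to get $\tau_*=-\id$; these are dual versions of the same computation. Your observation that for $n=2$ one needs $|H_1(\Sigma)|$ odd to reconcile the two sentences of the proposition is a detail the paper leaves implicit---it is worth keeping, but you can state it in one line rather than the back-and-forth in your draft.
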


\begin{proof} 
A character $\alpha$ gives rise to a point in $\R_{\ab}^{\tau} (\Sigma)$ if and only if $\tau^*\alpha = \alpha$ or $\tau^*\alpha = \alpha^{-1}$. The former only occurs for the trivial character by Lemma \ref{L:ab}. For a central character $\alpha$, the condition $\tau^*\alpha = \alpha^{-1}$ is equivalent to $\tau^*\alpha = \alpha$ hence $\alpha$ is again trivial. If $n$ is odd, it follows that $\alpha = \alpha^{-1}$ so $\alpha$ is central and hence trivial. If $n = 2$, it follows from Lemma \ref{L:ab} that $\tau^*$ acts as the negative identity on the character group and therefore as the identity on $\R_{\ab} (\Sigma)$.
\end{proof}

%The space $\R^{\tau}(\Sigma)$ is called non-degenerate if its Zariski tangent space vanishes at every point. If $\R^{\tau}(\Sigma)$ is non-degenerate then it consists of finitely many points which can be counted with signs as in \cite{CS} to obtain the equivariant Casson invariant $\lambda^{\tau} (\Sigma)$ (if $\tau = \id$, it coincides with the regular Casson invariant $\lambda(\Sigma)$). Under the additional assumption that $\Sigma/\tau$ is an integral homology sphere, the modulo 2 reduction of $\lambda^{\tau}(\Sigma)$ is the Rokhlin invariant.

%\begin{thm}\label{T1}
%If the space $\R^{\tau}(\Sigma)$ is non-degenerate then $\M_0^*(X)$ consists of finitely many points and 
%\[
%\lambda^{\tau}(\Sigma) = \frac 1 4\,D_0(X),
%\]
%where $D_0(X)$ is the degree zero Donaldson polynomial of $X$ obtained by counting points in $\M_0^*(X)$ with signs. 
%\end{thm}

\begin{pro}\label{P:one}
Let $i: \Sigma \to X$ be the inclusion map given by the formula $i(x) = [0,x]$. Then the induced map
\begin{equation}\label{E:i*}
i^*: \M^*(X) \to \R^{\tau}(\Sigma)
\end{equation}
is well-defined, and is a one-to-one correspondence over $\R_{\ab}^{\tau} (\Sigma)$ and a two-to-one correspondence over $\R^{\tau}_{\irr} (\Sigma)$.
\end{pro}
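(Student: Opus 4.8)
The plan is to analyze the moduli space $\M^*(X)$ on the mapping torus via its natural circle action, or rather via the fact that $X$ fibers over $S^1$ with fiber $\Sigma$ and monodromy $\tau$. An irreducible ASD connection on the trivial $SU(2)$-bundle $E\to X$ is, after a perturbation making it flat (the usual dimension count shows the unperturbed moduli space consists of flat connections, and one works with a holonomy perturbation supported away from the relevant loci), a conjugacy class of representations $\rho\colon\pi_1(X)\to SU(2)$. Since $\pi_1(X)$ is the semidirect product $\pi_1(\Sigma)\rtimes_{\tau_*}\Z$, such a $\rho$ restricts to a representation $\rho_0$ of $\pi_1(\Sigma)$ whose conjugacy class is fixed by $\tau^*$, together with the choice of an extension over the $\Z$-factor. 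First I would check that $i^*$ is well-defined: restricting a flat connection on $X$ to the fiber $\Sigma\times\{0\}$ lands in $\R(\Sigma)$, it cannot be the trivial class (else $\rho$ factors through $\Z$, hence is reducible, contradicting irreducibility), and $\tau$-invariance of the restricted class is forced because $\Sigma\times\{0\}$ and $\Sigma\times\{1\}$ are identified in $X$ via $\tau$. So the image lies in $\R^\tau(\Sigma)=\R_{\ab}^\tau(\Sigma)\sqcup\R_{\irr}^\tau(\Sigma)$.

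The heart of the argument is counting extensions. Given $[\rho_0]\in\R^\tau(\Sigma)$ with $\tau^*[\rho_0]=[\rho_0]$, pick a representative $\rho_0$; then there is $g\in SU(2)$ with $\rho_0\circ\tau_*=g\,\rho_0\,g^{-1}$, and an extension of $\rho_0$ to $\pi_1(X)$ amounts to choosing the image $h$ of the stable letter $t$ subject to $h\,\rho_0(\gamma)\,h^{-1}=\rho_0(\tau_*\gamma)=g\,\rho_0(\gamma)\,g^{-1}$ for all $\gamma$, i.e. $g^{-1}h$ centralizes the image of $\rho_0$. If $\rho_0$ is irreducible its centralizer is $\{\pm 1\}$, so $h\in\{\pm g\}$; the two choices $h=g$ and $h=-g$ give genuinely distinct $SU(2)$-conjugacy classes of representations of $\pi_1(X)$ (they are distinguished, for instance, by the trace of $\rho(t)$ on a suitable loop, or more invariantly because they differ by the nontrivial element of $H^1(X;\Z/2)=\Z/2$ acting on the representation variety), and both are irreducible as representations of $\pi_1(X)$ since already $\rho_0$ is — hence the fiber of $i^*$ over $\R_{\irr}^\tau(\Sigma)$ has exactly two points. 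If instead $\rho_0$ is abelian, by Proposition~\ref{P:ab} it is non-central with $\tau^*\alpha=\alpha^{-1}$; here the centralizer of the image of $\rho_0$ is a maximal torus $U(1)$, so $h\in g\cdot U(1)$, giving an $S^1$-family of extensions, but now the stabilizer of $\rho_0$ inside the gauge group acts on this $S^1$-family, and one must see that this action is transitive so the fiber is a single point. Concretely, conjugating by an element of the centralizing $U(1)$ multiplies $h=g\cdot e^{i\theta}$ around the whole circle while fixing $\rho_0$; but one has to be careful that the resulting representation of $\pi_1(X)$ really is irreducible (it is: if it were reducible it would be conjugate into $U(1)$, forcing $\tau^*\alpha=\alpha$, contradicting non-centrality via Lemma~\ref{L:ab}) and that the map $i^*$ is injective on the abelian part — two extensions of possibly different representatives $\rho_0,\rho_0'$ of the same class in $\R_{\ab}^\tau(\Sigma)$ that become conjugate in $SU(2)$ after including the stable letter. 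Assembling: over $\R_{\ab}^\tau(\Sigma)$ the correspondence is one-to-one, over $\R_{\irr}^\tau(\Sigma)$ it is two-to-one, and combined with well-definedness this is exactly the statement.

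The step I expect to be the main obstacle is the abelian case: showing the fiber of $i^*$ over $\R_{\ab}^\tau(\Sigma)$ is a \emph{single} point rather than a circle, i.e. that the residual $U(1)$-stabilizer of a non-central abelian $\rho_0$ acts transitively on the $S^1$ of extensions. This requires tracking the $SU(2)$-conjugation action carefully — in particular verifying that the element $g$ implementing $\tau^*\alpha=\alpha^{-1}$ normalizes but does not centralize the maximal torus containing the image of $\rho_0$ (it acts by inversion), so that conjugating $g\,e^{i\theta}$ by $e^{i\psi}\in U(1)$ sends it to $g\,e^{i(\theta+2\psi)}$, which does sweep out the full circle as $\psi$ varies. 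There is also the orientation/transversality bookkeeping needed to make $\#\M^*(X)$ well-defined and to ensure the correspondence respects the count, but that is routine given the perturbation scheme of \cite{LRS2, CS, RS2}, and the dimension-counting that flat connections on $X$ are isolated modulo the abelian locus. I would handle irreducibility of the constructed representations of $\pi_1(X)$ uniformly by the observation that a reducible representation of the semidirect product restricts to a reducible (abelian or trivial) representation of $\pi_1(\Sigma)$, which pins down which fibers can contain reducibles and confirms none of the points we have constructed over $\R^\tau(\Sigma)$ are reducible.
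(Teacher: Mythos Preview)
Your proposal is correct and follows essentially the same route as the paper: both exploit the semidirect product decomposition $\pi_1(X)\cong\pi_1(\Sigma)\rtimes_{\tau_*}\Z$ to identify a representation of $\pi_1(X)$ with a pair $(\alpha,u)$ satisfying $\tau^*\alpha=u\alpha u^{-1}$, then count the fiber of $i^*$ by the centralizer of $\alpha$. Your treatment of the abelian case (explicitly computing that conjugation by the $U(1)$--stabilizer sweeps out the circle $g\cdot U(1)$ of possible $u$'s) is just a spelled-out version of the paper's one-line observation that any two elements of $j\cdot U(1)$ are conjugate by a unit complex number; the minor sign slip in your formula $g\,e^{i(\theta+2\psi)}$ (it should be $\theta-2\psi$) is inconsequential.
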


\begin{proof}
The natural projection $X\to S^1$ is a locally trivial bundle whose 
homotopy exact sequence
\[
\begin{CD}
0 @>>> \pi_1(\Sigma) @>>> \pi_1(X) @>>> \mathbb Z @>>> 0
\end{CD}
\]
splits, making $\pi_1(X)$ into a semi-direct product of $\pi_1(\Sigma)$ and $\mathbb Z$. Let $t$ be a generator of $\mathbb Z$ then every representation $A: \pi_1(X)\to SU(2)$ determines and is uniquely determined by the pair $(\alpha,u)$ where $u = A(t)$ and $\alpha= i^* A: \pi_1(\Sigma)
\to SU(2)$ is a representation such that $\tau^*\alpha = u\alpha u^{-1}$. In particular, the conjugacy class of $\alpha$ is fixed by $\tau^*$. 

If $\alpha$ is trivial, $A$ must be reducible. If $\alpha$ is non-trivial abelian, it cannot be central by Proposition \ref{P:ab}. Given a non-central abelian $\alpha$, conjugate it to a representation whose image is in the group $U(1)$ of unit complex numbers in $SU(2)$. Then $\tau^*\alpha = \alpha^{-1}$ by Proposition \ref{P:ab}, hence $\alpha = i^*A$ with $u = A(t)$ in the circle $j\cdot U(1)$. In particular, $A$ is irreducible and $u^2 = -1$. Since any two elements of $j\cdot U(1)$ are conjugate to each other by a unit complex number, the map $i^*$ is a one-to-one correspondence over $\R_{\ab}^{\tau} (\Sigma)$. Finally, let $\alpha$ be an irreducible representation with the conjugacy class in $\R^{\tau}_{\irr} (\Sigma)$. Then there is a unit quaternion $u$ such that $\tau^*\alpha = u\alpha u^{-1}$, and therefore $\alpha$ is in the image of $i^*$. Moreover, there are exactly two different choices of $u$ such that $\tau^*\alpha= u\alpha u^{-1}$ because if $u_1\alpha u_1^{-1} = u_2\alpha u_2^{-1}$ then $u_1 = \pm\, u_2$ since $\alpha$ is irreducible. Therefore, the map $i^*$ is a two-to-one correspondence in this case. Also note that the irreducibility of $\alpha$ implies that $u^n = \pm 1$. 
\end{proof}

\begin{rmk}
It follows from the above proof that the characters in $\M^*(X)$ that are mapped by $i^*$ to $\R^{\tau}_{\ab} (\Sigma)$ are binary dihedral, while those mapped to $\R^{\tau}_{\irr}(\Sigma)$ are not. 
\end{rmk}

The Zariski tangent space to $\R^{\tau}(\Sigma)$ at a point $[\alpha] \in \R^{\tau}(\Sigma)$ is the fixed point set of the map $\tau^*: T_{[\alpha]} \R (\Sigma) \to T_{[\alpha]} \R (\Sigma)$. Using an identification $T_{[\alpha]} \R (\Sigma) = H^1 (\Sigma,\ad\alpha)$ and the fact that $\tau^*\alpha = u\alpha u^{-1}$, this set can be described in cohomological terms as the fixed point set of the map
\[
\Ad u\circ \tau^*: H^1 (\Sigma,\ad\alpha) \to H^1 (\Sigma,\ad\alpha).
\]
We call $\R^{\tau}(\Sigma)$ {\it non-degenerate} if the equivariant cohomology groups $H^1_{\tau}(\Sigma,\ad\alpha) = \Fix\,\allowbreak (\Ad u\,\circ \tau^*: H^1 (\Sigma,\ad\alpha) \to H^1 (\Sigma,\ad\alpha))$ vanish for all $[\alpha]\in\R^{\tau}(\Sigma)$. The moduli space $\M^*(X)$ is called {\it non-degenerate} if $\coker(d_A^*\,\oplus\, d_A^+) = 0$ for all $[A]\in \M^*(X)$. Since $\ind(d^*\,\oplus\, d_A^+) = \dim\M^*(X) = 0$, this is equivalent to $\ker(d_A^*\,\oplus\, d_A^+) = 0$ and, since $A$ is flat and irreducible, to simply $H^1(X,\ad A) = 0$.

\begin{pro}\label{P2}
The moduli space $\M^*(X)$ is non-degenerate if and only if $\R^{\tau}(\Sigma)$ is non-degenerate. 
\end{pro}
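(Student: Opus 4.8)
The plan is to relate the deformation complex of a flat connection $A$ on the mapping torus $X$ to the equivariant deformation complex of the corresponding flat connection $\alpha$ on $\Sigma$, using the fact that $X$ is a mapping torus and hence its cohomology (with any local coefficient system pulled back appropriately) can be computed by a Wang-type exact sequence. Concretely, for $[A]\in\M^*(X)$ with $i^*[A]=[\alpha]$ and $\tau^*\alpha = u\alpha u^{-1}$, I would first observe that since $A$ is flat and irreducible, non-degeneracy of $\M^*(X)$ is equivalent to $H^1(X,\ad A)=0$, as already noted in the excerpt. So the whole proposition reduces to showing that $H^1(X,\ad A)=0$ if and only if $H^1_\tau(\Sigma,\ad\alpha)=0$ for the matching $[\alpha]$, together with the bookkeeping that every $[\alpha]\in\R^\tau(\Sigma)$ arises this way (which is Proposition \ref{P:one}).

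The key computation is the Wang sequence for the fibration $\Sigma\hookrightarrow X\to S^1$ with coefficients in $\ad A$. Restricting $\ad A$ to the fiber $\Sigma$ gives the local system $\ad\alpha$, and the monodromy acts on $H^*(\Sigma,\ad\alpha)$ by $\Ad u\circ\tau^*$ (the $\tau^*$ from the geometric covering translation, conjugated by $u=A(t)$ to account for how $A$ trivializes along the $S^1$ direction). The Wang exact sequence then reads
\[
\cdots \to H^0(\Sigma,\ad\alpha)\xrightarrow{\Ad u\circ\tau^*-1} H^0(\Sigma,\ad\alpha)\to H^1(X,\ad A)\to H^1(\Sigma,\ad\alpha)\xrightarrow{\Ad u\circ\tau^*-1} H^1(\Sigma,\ad\alpha)\to\cdots
\]
Since $\alpha$ is irreducible (the cases mapping to $\R_{\ab}^\tau$ are binary dihedral, hence still have $H^0(\Sigma,\ad\alpha)=0$ because the stabilizer is finite — one should check this, but it follows from $\alpha$ being non-central), we have $H^0(\Sigma,\ad\alpha)=0$, so the sequence gives an isomorphism $H^1(X,\ad A)\cong \ker\bigl(\Ad u\circ\tau^*-1: H^1(\Sigma,\ad\alpha)\to H^1(\Sigma,\ad\alpha)\bigr) = H^1_\tau(\Sigma,\ad\alpha)$. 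Thus $\M^*(X)$ is non-degenerate at $[A]$ precisely when $\R^\tau(\Sigma)$ is non-degenerate at $[\alpha]$, and running over all points (via Proposition \ref{P:one}) finishes the proof.

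I expect the main obstacle to be the careful identification of the monodromy action on $H^1(\Sigma,\ad\alpha)$ as exactly $\Ad u\circ\tau^*$, including the sign/conjugation conventions, and confirming that the $H^0$ term vanishes in the binary dihedral case so that the Wang sequence degenerates to the desired isomorphism rather than leaving an extra extension term. One must also be slightly careful that the identification $T_{[\alpha]}\R^\tau(\Sigma)=H^1_\tau(\Sigma,\ad\alpha)$ used in the definition matches the cohomological description coming out of the Wang sequence; since the excerpt has already set up exactly this identification just before the proposition, this should be routine. The rest — flatness of $A$, reducing $\coker(d_A^*\oplus d_A^+)$ to $H^1(X,\ad A)$ via the index-zero condition — is standard Yang–Mills deformation theory and is already sketched in the paragraph preceding the statement.
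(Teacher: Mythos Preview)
Your approach via the Wang exact sequence is essentially the same as the paper's, which uses the Leray--Serre spectral sequence of $\Sigma\hookrightarrow X\to S^1$; for a fibration over $S^1$ these are equivalent, and both yield
\[
H^1(X,\ad A)\;\cong\;\coker\bigl(\Ad u\circ\tau^*-1:H^0(\Sigma,\ad\alpha)\to H^0(\Sigma,\ad\alpha)\bigr)\,\oplus\,H^1_\tau(\Sigma,\ad\alpha).
\]

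There is, however, a genuine gap in your handling of the abelian case. You assert that when $[A]$ maps to $\R^\tau_{\ab}(\Sigma)$ one still has $H^0(\Sigma,\ad\alpha)=0$ ``because the stabilizer is finite.'' This conflates $A$ with $\alpha=i^*A$. It is $A:\pi_1(X)\to SU(2)$ that is binary dihedral and irreducible; the restriction $\alpha:\pi_1(\Sigma)\to SU(2)$ is a non-central \emph{abelian} representation with image in $U(1)$, and its stabilizer is the full circle $U(1)$. Hence $H^0(\Sigma,\ad\alpha)=i\cdot\mathbb R$ is one-dimensional, not zero, and the Wang sequence does not immediately collapse.

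The repair is exactly what the paper does: compute the monodromy on this one-dimensional $H^0$. One has $\tau^*=\id$ on $H^0(\Sigma,\ad\alpha)$ (the centralizer of $\im\alpha$ does not move), while $u\in j\cdot U(1)$ implies $\Ad u$ acts as $-1$ on $i\cdot\mathbb R$. Thus $\Ad u\circ\tau^*-1=-2$ is an isomorphism on $H^0$, its cokernel vanishes, and your Wang sequence then does give $H^1(X,\ad A)\cong H^1_\tau(\Sigma,\ad\alpha)$ as desired. With this correction in place, your argument and the paper's coincide.
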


\begin{proof}
The group $H^1 (X,\ad A)$ can be computed with the help of the Leray--Serre spectral sequence of the fibration $X\to S^1$ with fiber $\Sigma$. The $E_2$--page of this spectral sequence is 
\[
E_2^{pq} = H^p(S^1,\H^q(\Sigma,\ad\alpha)),
\]
where $\alpha = i^* A$ and $\H^q(\Sigma,\ad\alpha)$ is the local coefficient system associated with the fibration. The groups $E_2^{pq}$ vanish for all $p\ge 2$ hence the spectral sequence collapses at $E_2$--page, and
\begin{equation}\label{E:E2}
H^1(X,\ad A) = H^1(S^1,\H^0(\Sigma,\ad\alpha))\,\oplus\,H^0(S^1,\H^1(\Sigma,
\ad\alpha)).
\end{equation}
The generator of $\pi_1(S^1)$ acts on the cohomology groups $H^* (\Sigma,\ad\alpha)$ as
\[
\Ad u\circ \tau^*: H^* (\Sigma,\ad\alpha) \to H^* (\Sigma,\ad\alpha),
\]
where $u$ is such that $\tau^*\alpha = u \alpha u^{-1}$. If $\alpha$ is irreducible, $H^0 (\Sigma,\ad\alpha) = 0$ and the first summand in \eqref{E:E2} vanishes. If $\alpha$ is non-trivial abelian, we may assume without loss of generality that it takes values in the group $U(1)$ of unit complex numbers. Then $\tau^*\alpha = u\alpha u^{-1}$ for some $u \in j\cdot U(1)$ and $H^0 (\Sigma,\ad\alpha) = i\cdot \mathbb R$ as a subspace of $\su(2)$, with $\tau^* = \id$. One can easily check that $\Ad u$ acts as minus identity on $i\cdot\mathbb R$ hence the first summand in \eqref{E:E2} again vanishes. The second summand in \eqref{E:E2} is the fixed point set of $\tau^*$ acting on $H^1(\Sigma,\ad\alpha)$, which is the equivariant cohomology $H^1_{\tau}(\Sigma,\ad\alpha)$. Thus we conclude that $H^1(X,\ad A) = H^1_{\tau}(\Sigma,\ad\alpha)$, which completes the proof. 
\end{proof}

Let us assume that $\R^{\tau}(\Sigma)$ is non-degenerate. For any $[\alpha] \in \R^{\tau}(\Sigma)$, its orientation will be given by
\[
(-1)^{\sf^{\tau}(\theta,\alpha)}
\]
where $\sf^{\tau} (\theta,\alpha)$ is the mod 2 equivariant spectral flow defined in \cite[Section 3.4]{RS2} for irreducible $\alpha$. That definition extends word for word to abelian $\alpha$ after one resolves the technical issue of the existence of a constant lift, which we will do next.

Let $P$ be an $SU(2)$ bundle over $\Sigma$ with a fixed trivialization and $\alpha$ an abelian flat connection in $P$ with holonomy in $\R^{\tau}_{\ab}(\Sigma)$; we are abusing notations by using the same symbol for the connection and its holonomy. Since $\R^{\tau}_{\ab}(\Sigma)$ is empty for odd $n$ (see Proposition \ref{P:ab}) we will assume without loss of generality that $n$ is even. Then $\tau$ admits a lift $\tilde \tau: P \to P$ such that $\tilde\tau^*\alpha = \alpha$. Since $\alpha$ is abelian and non-central, this lift is defined uniquely up to the stabilizer of $\alpha$, which is a copy of $U(1)$ in $SU(2)$. The lift $\tilde\tau$ can be written in the base-fiber coordinates as $\tilde \tau(x,y) = (\tau(x), \rho(x)\cdot y)$ for some function $\rho: \Sigma \to SU(2)$. We call it \emph{constant} if there exists $u\in SU(2)$ such that $\rho(x) = u$ for all $x\in SU(2)$.

\begin{lem}\label{L:lift}
By changing $\alpha$ within its gauge equivalence class, one may assume that $\tilde\tau$ is a constant lift with $u^2 = -1$.
\end{lem}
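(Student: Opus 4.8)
The plan is to exploit the fact that $\alpha$ is abelian and non-central to reduce the structure group, and then to use the $n$--fold periodicity of $\tilde\tau$ (since $\tau^n = \id$) together with $H^1(\Sigma;\mathbb{R})=0$. First I would observe that, after conjugating $\alpha$, its holonomy lies in the circle $U(1)\subset SU(2)$ of unit complex numbers, and that any lift $\tilde\tau$ preserving $\alpha$ must carry $\alpha$ to itself, so $\tilde\tau$ normalizes the reduction of $P$ to the flat $U(1)$--subbundle $L\subset P$ determined by $\alpha$. Thus the function $\rho: \Sigma \to SU(2)$ satisfying $\tilde\tau(x,y) = (\tau(x), \rho(x)y)$ takes values in the normalizer $N(U(1)) = U(1)\cup j\cdot U(1)$. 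Since $\mathbb{R}(\Sigma)^{\tau}_{\ab}$ is non-empty only for even $n$ and, by Proposition~\ref{P:ab}, $\tau^*\alpha = \alpha^{-1}$, the component of $\rho$ that realizes the equation $\tilde\tau^*\alpha = \alpha$ for the \emph{reduced} bundle is forced into the non-identity coset $j\cdot U(1)$; so $\rho(x) = j\cdot e^{i\phi(x)}$ for a real-valued function $\phi$, and the goal becomes to kill $\phi$ by a gauge transformation valued in the stabilizer $U(1)$ of $\alpha$.

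The key step: a gauge transformation $g:\Sigma\to U(1)$, $g(x) = e^{i\psi(x)}$, acts on $\rho$ by the twisted rule $\rho(x)\mapsto g(\tau(x))\,\rho(x)\,g(x)^{-1}$, and since conjugation by $j$ inverts $U(1)$, this sends $j e^{i\phi(x)}$ to $j\,e^{i(\phi(x) - \psi(x) - \psi(\tau(x)))}$. So I need to solve $\psi(x) + \psi(\tau(x)) = \phi(x) \pmod{2\pi\mathbb{Z}}$ for a smooth real function $\psi$ making the right-hand coset element constant; more precisely, I want $\phi(x) - \psi(x) - \psi(\tau(x))$ to be a constant $c$. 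Replacing $\phi$ by $\phi - c$ and iterating the substitution $x\mapsto \tau(x)$ a total of $n$ times, using $\tau^n = \id$, telescopes to the necessary condition $\sum_{k=0}^{n-1}(-1)^k\phi(\tau^k x) \equiv (\text{const})$; I would first check this obstruction vanishes — it does because $H^1(\Sigma;\mathbb{R}) = 0$ forces the relevant cohomology class (the holonomy $\alpha$ has been fixed, and the alternating sum represents a $\tau$--invariant real $1$--cocycle that must be a coboundary) — and then solve the finite linear system explicitly, e.g. setting $\psi = \tfrac{1}{n}\sum_{k=0}^{n-1}(-1)^k\, ?$ averaged appropriately. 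Once $\rho$ is the constant $u = j\cdot e^{i\theta_0}$, a further constant gauge change by an element of $U(1)$ rotates $u$ to $j$ itself, so $u^2 = j^2 = -1$, as claimed.

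The main obstacle I expect is the bookkeeping around the $\pm$ ambiguity and the mod-$2\pi$ issue: the stabilizer-valued gauge transformation $g$ is only $U(1)$ (not all of $SU(2)$), so I cannot simply trivialize $\tilde\tau$ outright, and I must be careful that solving $\psi(x)+\psi(\tau(x)) = \phi(x)$ can be done \emph{smoothly} and \emph{globally} rather than just pointwise — this is where the hypothesis $H_1(\Sigma;\mathbb{Z})$ finite (equivalently $H^1(\Sigma;\mathbb{R})=0$) enters, guaranteeing that the de Rham class of $d\phi$, suitably interpreted, vanishes so that the averaging construction produces an honest function. A secondary subtlety is that changing $\alpha$ within its gauge class to arrange the conjugation into $U(1)$ must be compatible with the lift, but since the lift is unique up to the $U(1)$-stabilizer this is automatic. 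I do not anticipate needing the full strength of the orbifold machinery here — the statement is purely about flat connections on $\Sigma$ and the finite-order symmetry $\tau$.
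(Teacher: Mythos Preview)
Your strategy---reduce $\rho$ to $j\cdot U(1)$ and then solve the equation $\psi(x)+\psi(\tau x)=\phi(x)-c$ by a $U(1)$--valued gauge transformation---is genuinely different from the paper's, which instead compares the $SO(3)$ orbifold bundles $P/\tilde\tau$ and $P/u$ over $Y$ and invokes the classification of such bundles by their holonomy along the singular set. Your direct approach can be made to work, but there is a real gap at the step where you claim the obstruction vanishes.

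After lifting $\phi$ to a real-valued function (this is where $H^1(\Sigma;\Z)=0$ is actually used), the solvability of $(1+T)\psi=\phi-c$ in $C^\infty(\Sigma;\mathbb R)$ is equivalent to the vanishing of
\[
\Phi(x)\;:=\;\sum_{k=0}^{n-1}(-1)^k\,\phi(\tau^k x),
\]
not merely to $\Phi$ being constant: since $n$ is even, the constant $c$ contributes nothing to the alternating sum, so a nonzero value of $\Phi$ cannot be absorbed into $c$. You do know that $\tilde\tau^{\,n}$ lies in the stabilizer of $\alpha$ (constant $U(1)$), and a short computation gives $\tilde\tau^{\,n}=(-1)^{n/2}e^{i\Phi(x)}$, so $\Phi$ is constant; but the vanishing of that constant does \emph{not} follow from $H^1(\Sigma;\mathbb R)=0$, and your description of $\Phi$ as ``a $\tau$--invariant real $1$--cocycle'' does not make sense as stated. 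What actually forces $\Phi\equiv 0$ is the existence of fixed points: for any $x_0\in\Fix(\tau)=\widetilde K$ one has $\Phi(x_0)=\phi(x_0)\sum_{k=0}^{n-1}(-1)^k=0$. Once $\Phi\equiv 0$, the eigenspace decomposition for $T$ (with $T^n=\id$) yields the explicit solution $\psi=\sum_{\zeta\neq -1}(1+\zeta)^{-1}\phi_\zeta$, and the rest of your argument goes through (any $u\in j\cdot U(1)$ already satisfies $u^2=-1$, so your final rotation to $j$ is unnecessary).

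So the ``orbifold'' feature you hoped to avoid---the fixed-point locus---is exactly what kills the obstruction. The paper's proof likewise begins by analyzing $\rho$ on $\Fix(\tau)$, and then finishes with the orbifold bundle-classification step in place of your cohomological equation; that route sidesteps the eigenspace computation but uses slightly heavier machinery.
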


\begin{proof}
The equation $\tilde\tau^*\alpha = \alpha$ implies that $(\tilde\tau^n)^*\alpha = \alpha$, so the gauge transformation $\tilde\tau^n$ belongs to the stabilizer of the connection $\alpha$. If $x \in \Fix (\tau)$ then $\tilde\tau^n (x,y) = (x,\rho(x)^n\cdot y)$ hence $\rho(x)^n$ is a unit complex number independent of $x$. This implies that $\rho(x)$ itself is a unit complex number unless $\rho(x)^n = \pm 1$. The latter equation actually implies that $\rho(x)^2 = -1$ because, at the level of holonomy representations, $\tau^*\alpha = \alpha^{-1}$ is conjugate to $\alpha$ by an element $u \in SU(2)$ with $u^2 = -1$; see the proof of Proposition \ref{P:one}. Since $\rho(x)^2 = -1$ describes a single conjugacy class $\tr \rho(x) = 0$ in $SU(2)$, we may assume that $\rho(x) = u$ for all $x \in \Fix(\tau)$. 

To finish the proof, we will follow the argument of \cite[Section 2.2]{RS2}. Let $u: P \to P$ be the constant lift $u(x,y) = (\tau(x),u\cdot y)$ and consider the $SO(3)$ orbifold bundles $P/\tilde\tau$ and $P/u$ over the integral homology sphere $Y$. All such bundles are classified by the holonomy around the singular set in $Y$. Since this holonomy equals $\Ad(u)$ in both cases, the bundles $P/\tilde\tau$ and $P/u$ must be isomorphic, with any isomorphism pulling back to a gauge transformation $g: P \to P$ relating the lifts $\tilde\tau$ and $u$.
\end{proof}

\begin{pro}
Assuming that the moduli space $\R^{\tau}(\Sigma)$ is non-degenerate, the map \eqref{E:i*} is orientation preserving. 
\end{pro}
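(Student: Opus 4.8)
The plan is to compare, point by point, the orientation of $\M^*(X)$ at a flat connection $[A]$ with the sign $(-1)^{\sf^{\tau}(\theta,\alpha)}$ attached to $[\alpha]=i^*[A]\in\R^{\tau}(\Sigma)$. Recall that, with the homology orientation fixed on $X$ and since $b^+(X)=0$, Donaldson's orientation convention assigns to an isolated non-degenerate point $[A]\in\M^*(X)$ the sign $(-1)^{\sf(\Theta,A)}$, where $\Theta$ denotes the trivial $SU(2)$ connection and $\sf(\Theta,A)$ is the mod $2$ spectral flow of the path of rolled-up ASD deformation operators $d_{A_t}^*\oplus d_{A_t}^+$ along any path $\{A_t\}$ from $\Theta$ to $A$, after a small perturbation near $\Theta$ making that endpoint non-degenerate; this is exactly the setup used in \cite{CS,RS2,LRS2}. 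Thus it suffices to exhibit one path $\{A_t\}$ for which $\sf(\Theta,A)\equiv\sf^{\tau}(\theta,\alpha)\pmod 2$.

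To build such a path I would use the bundle automorphism $\tilde\tau\colon P\to P$ covering $\tau$: in the irreducible case it is supplied up to sign by Proposition \ref{P:one}, and in the abelian case, after replacing $\alpha$ within its gauge equivalence class, by Lemma \ref{L:lift}; in both cases $\tilde\tau$ may be taken to be the constant lift given by some $u\in SU(2)$ with $u^{2}=\pm1$, and $\tilde\tau^{*}\alpha=\alpha$. The space of $\tilde\tau$-invariant connections on $P$ is affine and contains both the trivial connection $\theta$ and $\alpha$, so I can choose a path $\{\alpha_t\}$ of $\tilde\tau$-invariant connections joining them. Because each $\alpha_t$ is $\tilde\tau$-invariant, it descends to a connection $A_t$ on the corresponding bundle over the mapping torus $X$, producing a path from $\Theta$ to $A$.

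The heart of the argument is then a Fourier decomposition of the spectral flow along $\{A_t\}$ in the circle direction of the orbifold bundle $\pi\colon X\to Y^{o}$. Writing the deformation operator $d_{A_t}^*\oplus d_{A_t}^+$ on $X$ in terms of the $S^{1}$-fibres, and simultaneously decomposing $\Omega^{*}(\Sigma,\ad\alpha_t)$ into eigenspaces of $\Ad\tilde\tau$, exhibits it as a completed direct sum of twisted operators on $\Sigma$, one for each character of $\mathbb Z$ compatible with an eigenvalue of $\Ad\tilde\tau$. The summands attached to the non-real eigenvalues of $\Ad\tilde\tau$ occur in complex-conjugate pairs with identical spectral flows, so their total contribution to $\sf(\Theta,A)$ is even and may be discarded mod $2$; what remains is precisely the spectral flow of the $\Ad\tilde\tau$-invariant part of the family, which is the equivariant spectral flow $\sf^{\tau}(\theta,\alpha)$ of \cite[Section 3.4]{RS2}. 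This identification is legitimate in the abelian case exactly because Lemma \ref{L:lift} provided a constant lift. Hence $\sf(\Theta,A)\equiv\sf^{\tau}(\theta,\alpha)\pmod2$, so $i^{*}$ is orientation preserving. For $[\alpha]\in\R^{\tau}_{\irr}(\Sigma)$, where $i^{*}$ is two-to-one, the two preimages correspond to the lifts $u$ and $-u$; since $-1$ is central, $\Ad(-u)=\Ad u$, so both branches receive the same sign and the statement holds on each sheet.

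I expect the main obstacle to be the rigorous bookkeeping in this last step: verifying that the four-dimensional spectral flow genuinely decomposes as the claimed direct sum over Fourier modes (with Fredholmness of each summand on the appropriate closed or product-end model), checking the conjugate-pair cancellation with the correct signs, and matching the surviving invariant piece with the published definition of $\sf^{\tau}$, together with the usual care at the trivial-connection endpoint — namely the existence and $\tilde\tau$-invariance of the perturbation needed to make $\theta$ non-degenerate in the equivariant sense, which is precisely the point for which the constant-lift issue of Lemma \ref{L:lift} had to be settled first.
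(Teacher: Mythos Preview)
Your proposal is correct and follows the same approach as the paper, which simply defers to \cite[Section 3]{RS2}: one compares the mod $2$ spectral flow of the ASD deformation operator on $X$ with the equivariant spectral flow on $\Sigma$ along a path of $\tilde\tau$-invariant connections, the constant lift in the abelian case being exactly what Lemma \ref{L:lift} supplies. One small correction: the Fourier decomposition should be taken with respect to the base circle of the mapping torus fibration $X\to S^1$ (equivalently, via the $\Z/n$-cover $S^1\times\Sigma\to X$), not along the fibres of $\pi\colon X\to Y^o$, since the latter would produce operators on $Y^o$ rather than on $\Sigma$; with that adjustment your eigenspace bookkeeping and conjugate-pair cancellation match \cite{RS2} exactly.
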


\begin{proof}
The proof from \cite[Section 3]{RS2} extends to the current situation with no change.
\end{proof}

%%%%%%%%%%%%%%%%%%%%%%%%%%%%%%%%%%%%%%%%%%%%%%%

\subsection{Orbifold theory}\label{S:orb}
Under the continued non-degeneracy assumption, we will now describe $\R^{\tau}(\Sigma)$ in terms of orbifold representations. Let us consider the orbifold fundamental group $\pi_1^V (Y,K) = \pi_1 (N)/ \langle \mu^n \rangle$, where $N = Y - \Int(D(K))$ is the knot exterior and $\mu$ is a meridian of $K$. This group can be included into the split orbifold exact sequence 
\[
\begin{tikzpicture}
\draw (1,1) node (a) {$1$};
\draw (3,1) node (b) {$\pi_1 \Sigma$};
\draw (6,1) node (c) {$\pi_1^V(Y, K)$};
\draw (9,1) node (d) {$\Z/n$};
\draw (11,1) node (e) {$1,$};
\draw[->](a)--(b);
\draw[->](b)--(c) node [midway,above](TextNode){$\pi_{*}$};
\draw[->](c)--(d) node [midway,above](TextNode){$j$};
\draw[->](d)--(e);
\end{tikzpicture}
\]
where $j$ is the abelianization homomorphism. Denote by $\R^V (Y,K; SO(3))$ the character variety of irreducible $SO(3)$ representations of the group $\pi_1^V (Y, K)$, and also introduce the character variety $\R^{\tau} (\Sigma; SO(3))$ of non-trivial representations $\pi_1 \Sigma \to SO(3)$ fixed by $\tau^*$.

\begin{pro}\label{P:eq-knots}
The pull back of representations via the map $\pi_*$ in the orbifold exact sequence gives rise to a one-to-one correspondence
\[
\pi^*: \R^V (Y,K;SO(3)) \longrightarrow \R^{\tau} (\Sigma;SO(3)).
\]
\end{pro}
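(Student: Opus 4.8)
The plan is to make the splitting of the orbifold exact sequence do all the work: fix once and for all the meridian $\mu$ of $K$ as a section, so that $\pi_1^V(Y,K) = \pi_1\Sigma \rtimes \langle\mu\rangle$ with $\langle\mu\rangle\cong\Z/n$ and conjugation by $\mu$ inducing on $\pi_1\Sigma$ the automorphism $\tau_*$ (well-defined up to inner automorphisms, which is all that matters on a character variety); note $\mu^n=1$, so $\tau_*^n$ is inner. For an irreducible $SO(3)$ representation $\rho$ of $\pi_1^V(Y,K)$, set $\pi^*\rho = \rho|_{\pi_1\Sigma}$. Two routine checks show this lands in $\R^\tau(\Sigma;SO(3))$: if $\pi^*\rho$ were trivial then $\rho$ would factor through the cyclic group $\langle\mu\rangle$, but every cyclic subgroup of $SO(3)$ fixes an axis in $\mathbb R^3$, contradicting irreducibility; and $\Ad(\rho(\mu))$ intertwines $\pi^*\rho$ with $\pi^*\rho\circ\tau_*$, so $[\pi^*\rho]$ is fixed by $\tau^*$. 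Since conjugate representations of $\pi_1^V(Y,K)$ restrict to conjugate representations of $\pi_1\Sigma$, this defines the map $\pi^*$ on character varieties.

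For injectivity, suppose $\rho_1,\rho_2$ are irreducible with $[\pi^*\rho_1]=[\pi^*\rho_2]$; conjugating $\rho_2$ we may assume $\pi^*\rho_1=\pi^*\rho_2=\alpha$. Then $\rho_1(\mu)$ and $\rho_2(\mu)$ both conjugate $\alpha$ to $\alpha\circ\tau_*$ and both satisfy $\rho_i(\mu)^n=1$, hence differ by an element of the stabilizer $Z(\alpha)\subset SO(3)$, and conjugating by a suitable element of $Z(\alpha)$ — which $\rho_i(\mu)$ normalizes — makes $\rho_1(\mu)=\rho_2(\mu)$, so $\rho_1=\rho_2$. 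The conceptual point distinguishing this from the two-to-one statement of Proposition \ref{P:one} over $\R^{\tau}_{\irr}(\Sigma)$ is that passing from $SU(2)$ to $SO(3)=SU(2)/\{\pm1\}$ removes the $\pm1$ ambiguity in the choice of the intertwiner, so the fibres of $\pi^*$ are governed entirely by $Z(\alpha)$ and collapse to points.

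For surjectivity one starts with $[\bar\alpha]\in\R^\tau(\Sigma;SO(3))$, chooses a representative and an element $u\in SO(3)$ with $\Ad(u)\circ\bar\alpha=\bar\alpha\circ\tau_*$ (possible by $\tau^*$-invariance, unique up to $Z(\bar\alpha)$), notes that $\Ad(u^n)$ fixes $\bar\alpha$ so $u^n\in Z(\bar\alpha)$, and then must correct $u$ to $zu$ with $z\in Z(\bar\alpha)$ so that $(zu)^n=1$ on the nose; the pair $(\bar\alpha,u)$ then assembles into a homomorphism $\rho:\pi_1\Sigma\rtimes\Z/n\to SO(3)$ with $\pi^*\rho=\bar\alpha$, and one checks it is irreducible. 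This last part — vanishing of the obstruction $[u^n]\in H^2(\Z/n;Z(\bar\alpha))$ to choosing the compatible lift, followed by certifying that the resulting $\rho$ does not reduce its structure group — is the step I expect to be the real obstacle, and I would dispatch it exactly as the constant-lift analysis of Lemma \ref{L:lift} and \cite[Section 2.2]{RS2}: compare the orbifold $SO(3)$ bundles over $Y^o$ cut out by competing lifts through their holonomy around $K$, and use the standing non-degeneracy hypothesis to exclude the degenerate alternatives. The rest of the argument is formal bookkeeping with the semidirect-product structure.
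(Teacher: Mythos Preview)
Your overall strategy --- exploit the semidirect-product splitting $\pi_1^V(Y,K)=\pi_1\Sigma\rtimes\Z/n$ and build the inverse of $\pi^*$ by choosing an intertwiner $v$ for $\tau^*$ --- is exactly the paper's. The well-definedness check and the injectivity argument are fine.

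The gap is in surjectivity. You correctly isolate the issue (arranging $v^n=1$) but then propose to resolve it with the wrong tools: the constant-lift analysis of Lemma~\ref{L:lift} concerns $SU(2)$ bundle lifts used to set up equivariant spectral flow, not the construction of $SO(3)$ representations; and there is no ``standing non-degeneracy hypothesis'' in force for this proposition --- non-degeneracy enters only later, in Proposition~\ref{P2} and the orientation discussion. Neither ingredient bears on whether $v^n=1$, so your proposed route would not produce a proof.

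The paper's argument is a direct case split, and the obstruction you anticipate dissolves for elementary reasons. If $\bar\alpha$ is irreducible then $Z(\bar\alpha)=\{1\}$, so $v$ is unique; since $(\tau^*)^n=\id$ forces $v^n\in Z(\bar\alpha)$, one gets $v^n=1$ for free with no correction needed. If $\bar\alpha$ is non-trivial abelian, the analogue of Proposition~\ref{P:ab} gives $\tau^*\bar\alpha=\bar\alpha^{-1}$; conjugating $\bar\alpha$ into a fixed $SO(2)$, any intertwiner $v$ is a half-turn about a perpendicular axis, so already $v^2=1$, and since the abelian case only occurs for even $n$ one again has $v^n=1$. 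Irreducibility of the extension $\alpha'$ is then immediate: in the abelian case its image contains both the $SO(2)$ image of $\bar\alpha$ and the perpendicular half-turn $v$, hence fixes no axis. No cohomological machinery, bundle comparison, or non-degeneracy is required.
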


\begin{proof}
One can easily see that a representation $\alpha': \pi_1^V(Y,K) \to SO(3)$ pulls back to a trivial representation $\theta: \pi_1 \Sigma \to SO(3)$ if and only if $\alpha'$ is reducible. The same argument as in \cite[Proposition 3.3]{CS} shows that all pull-back representations belong to $\R^{\tau} (\Sigma,SO(3))$. The inverse map for $\pi^*$ is constructed as follows: given $[\alpha] \in \R^{\tau} (\Sigma,SO(3))$ choose $v \in SO(3)$ such that $\tau^*\alpha = v \alpha v^{-1}$, and define a representation $\alpha'$ of $\pi_1^V(Y,K) = \pi_1 \Sigma \rtimes \Z/n$ by the formula 
\begin{equation}\label{E:inverse}
\alpha'(g \cdot \mu^k)\, =\, \alpha(g)\cdot v^k.
\end{equation}
If $\alpha$ is irreducible, the element $v$ is unique hence formula \eqref{E:inverse} gives an inverse map. If $\alpha$ is non-trivial abelian, an argument similar to that in the proof of Proposition \ref{P:ab} shows that $v = \Ad u$ for some $u \in j\cdot U(1)$. Since any two elements of $j\cdot U(1)$ are conjugate to each other by a unit complex number, formula \eqref{E:inverse} again gives an inverse map.
\end{proof}

Representations $\pi_1^V (Y,K) \to SO(3)$ need not lift to $SU(2)$ representations; however, they lift to projective representations $\pi_1^V (Y,K) \to SU(2)$ sending $\mu^n$ to $\pm 1$; see \cite[Section 3.1]{RS1}. The character variety of such projective representations will be denoted by $\R^V (Y,K)$, and it will be oriented using the orbifold spectral flow.

\begin{pro}
The correspondence of Proposition \ref{P:eq-knots} gives rise to an orientation preserving correspondence $\R^V (Y,K) \to \R^{\tau} (\Sigma)$ which is one-to-one over $\R_{\ab}^{\tau} (\Sigma)$ and two-to-one over $\R^{\tau}_{\irr} (\Sigma)$.
\end{pro}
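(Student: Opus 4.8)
The plan is to deduce this statement by combining the algebraic bijection from Proposition \ref{P:eq-knots}, the identification of the $SU(2)$/projective picture from Proposition \ref{P:one}, and a spectral-flow bookkeeping argument to handle orientations. First I would observe that the underlying set-theoretic statement is essentially already in hand: Proposition \ref{P:eq-knots} gives a one-to-one correspondence $\pi^*\colon \R^V(Y,K;SO(3)) \to \R^{\tau}(\Sigma;SO(3))$ at the level of $SO(3)$ representations. Passing from $SO(3)$ to the projective $SU(2)$ picture introduces a potential $\Z/2$ ambiguity coming from the choice of lift $u$ of $v$, so the fiber-counting statement should be read off exactly as in Proposition \ref{P:one}: over the irreducible locus $\R^{\tau}_{\irr}(\Sigma)$ there are two choices of the projective lift $u$ with $u^n = \pm 1$ realizing $\tau^*\alpha = u\alpha u^{-1}$ (and $u_1 = \pm u_2$ since $\alpha$ is irreducible), whereas over the abelian locus $\R^{\tau}_{\ab}(\Sigma)$, Lemma \ref{L:lift} pins down a constant lift with $u^2 = -1$, unique up to the $U(1)$ stabilizer, so the correspondence is one-to-one there. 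Thus the cardinality part follows by matching these two lifting discussions against the bijection $\pi^*$.

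Next I would address orientations. The target $\R^{\tau}(\Sigma)$ is oriented (under the non-degeneracy assumption) by $(-1)^{\sf^{\tau}(\theta,\alpha)}$, the mod $2$ equivariant spectral flow, with the abelian case legitimized by the constant-lift construction of Lemma \ref{L:lift}; the source $\R^V(Y,K)$ is oriented by the orbifold spectral flow between the trivial orbifold connection and the given projective orbifold representation. The key point is that these two spectral flows agree under $\pi^*$. I would argue this exactly as in \cite[Section 3]{RS2} and \cite[Section 3.2]{CS}: the orbifold Dirac/ASD deformation operator on $Y^o$ along a path of orbifold connections pulls back, under the $n$-fold branched cover $\Sigma \to Y^o$, to the $\tau$-equivariant deformation operator on $\Sigma$ along the pulled-back path, and equivariant spectral flow on $\Sigma$ equals orbifold spectral flow downstairs because the relevant operator upstairs decomposes into $\tau$-isotypic pieces, of which the invariant piece is precisely the orbifold operator on $Y^o$. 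Hence $\sf^{\tau}(\theta,\pi^*\alpha') = \sf^{V}(\theta,\alpha') \pmod 2$, so the correspondence is orientation preserving.

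Finally I would note that the non-degeneracy hypothesis propagates correctly: by Proposition \ref{P2} non-degeneracy of $\R^{\tau}(\Sigma)$ is equivalent to that of $\M^*(X)$, and non-degeneracy of $\R^V(Y,K)$ (in the sense that the orbifold cohomology $H^1$ vanishes at each orbifold representation) is equivalent via $\pi^*$ to non-degeneracy of $\R^{\tau}(\Sigma)$, since $\pi^*$ induces an isomorphism on the relevant deformation cohomology (the invariant part of $H^1(\Sigma,\ad\alpha)$ is the orbifold $H^1$). So all three non-degeneracy conditions coincide and the statement is consistent. The main obstacle I anticipate is the orientation/spectral-flow comparison in the abelian (binary dihedral) case: one must check that the constant lift supplied by Lemma \ref{L:lift} is exactly what is needed to make the equivariant spectral flow well-defined and to make it match the orbifold spectral flow, rather than differing by a correction term; once the constant lift is in place, the argument of \cite{RS2} applies verbatim, but verifying that no extra sign intervenes (for instance from the $U(1)$ stabilizer of $\alpha$, or from the difference between $u$ and $-u$ as lifts) is the delicate bookkeeping step.
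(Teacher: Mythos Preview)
Your overall strategy matches the paper's: factor the correspondence through the $SO(3)$ bijection of Proposition~\ref{P:eq-knots}, then analyze the passage between $SO(3)$ and (projective) $SU(2)$ on each side. Your orientation argument, identifying equivariant and orbifold spectral flow, is also exactly what the paper does (in one sentence).

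There is, however, a genuine gap in the set-theoretic part. You implicitly treat the diagram
\[
\R^V(Y,K) \longrightarrow \R^V(Y,K;SO(3)) \xrightarrow{\ \cong\ } \R^{\tau}(\Sigma;SO(3)) \longleftarrow \R^{\tau}(\Sigma)
\]
as if the only content beyond Proposition~\ref{P:eq-knots} is the $\Z/2$ ambiguity in the left-hand map. Two obstruction-theoretic facts are missing, and both require Corollary~\ref{C:fix} (that $\tau_*-1$ is an automorphism of $H_1(\Sigma)$) in an essential way. First, \emph{well-definedness}: the pull-back $\pi^*\alpha'$ of a projective $SU(2)$ representation of $\pi_1^V(Y,K)$ is \emph{a priori} only a projective representation of $\pi_1(\Sigma)$; the obstruction to it being an honest representation is $w_2(\Ad\alpha)\in H^2(\Sigma;\Z/2)$. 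The paper kills this by noting that $w_2$ must lie in the $\tau^*$-fixed part of $H^2(\Sigma;\Z/2)$, which Poincar\'e duality and Corollary~\ref{C:fix} force to vanish. Second, \emph{the right-hand map is a bijection}: you need $\R^{\tau}(\Sigma)\to\R^{\tau}(\Sigma;SO(3))$ to be one-to-one, else your fiber count is wrong. The number of $\tau$-equivariant $SU(2)$ lifts of an equivariant $SO(3)$ representation is the cardinality of the fixed set of $\tau^*$ on $H^1(\Sigma;\Z/2)$, which again vanishes only because of Corollary~\ref{C:fix}. Since $\Sigma$ here is merely a rational homology sphere, neither of these is automatic.

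Your appeal to Lemma~\ref{L:lift} and Proposition~\ref{P:one} for the abelian case is in the right spirit but does not substitute for these arguments: Lemma~\ref{L:lift} concerns constant bundle lifts of $\tau$, not whether an equivariant $SO(3)$ representation admits a unique equivariant $SU(2)$ lift. The paper also spells out the equivalence ``$\alpha'$ is binary dihedral $\Leftrightarrow$ $\pi^*\alpha'$ is abelian'', which is what identifies the fixed locus of the $\Z/2$ action on $\R^V(Y,K)$ with the preimage of $\R^{\tau}_{\ab}(\Sigma)$; you gesture at this via the lift $u$ but do not actually prove it.
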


\begin{proof}
We first need to check that the map $\R^V (Y,K) \to \R^{\tau} (\Sigma)$ is well-defined because the pull back of a projective representation $\pi_1^V (Y,K) \to SU(2)$ is {\it a priori} a projective representation $\alpha: \Sigma \to SU(2)$. The only obstruction to it being an actual representation is the second Stiefel--Witney class $w_2 (\Ad\alpha) \in H^2 (\Sigma;\Z/2)$. That this class vanishes can be seen as follows. The conjugacy class of $\alpha$ is fixed by $\tau$, therefore, $w_2 (\Ad\alpha)$ must belong to the fixed point set of $\tau^*: H^2 (\Sigma; \Z/2) \to H^2 (\Sigma; \Z/2)$. Using Poincar\'e duality, this fixed point set can be identified with the kernel of the map $\tau_* - 1: H_1 (\Sigma; \Z/2) \to H_1 (\Sigma; \Z/2)$. This kernel vanishes because the map $\tau_* -1: H_1 (\Sigma) \to H_1 (\Sigma)$ is injective by Corollary \ref{C:fix} and is therefore an automorphism of the finite abelian group $H_1 (\Sigma)$; cf. the proof of Lemma \ref{L:ab}.

Let us now consider the adjoint representation $\Ad: SU(2) \to SO(3)$ and the induced maps on character varieties,
\begin{equation}\label{E:maps}
 \R^{\tau}(\Sigma) \to \R^{\tau}(\Sigma; SO(3))\quad\text{and}\quad\R^V (Y,K) \to \R^V (Y,K; SO(3)). 
\end{equation}

The first map is a one-to-one correspondence, which can be seen as follows. We showed in the previous paragraph that every equivariant representation $\pi_1 \Sigma \to SO(3)$ admits a lift to a representation $\pi_1 \Sigma \to SU(2)$. The number of all such lifts is known to equal the cardinality of $H^1 (\Sigma; \Z/2)$. However, we are only interested in equivariant lifts, and their number equals the cardinality of the fixed point set of $\tau^*: H^1 (\Sigma; \Z/2) \to H^1 (\Sigma; \Z/2)$. The latter cardinality is easily seen to be one using Corollary \ref{C:fix}.

The second map in \eqref{E:maps} is the quotient map by the action of $\Z/2$ sending the image of the meridian $\mu$ to its negative. The fixed points of this action are precisely the binary dihedral projective representations $\alpha': \pi_1^V (Y,K) \to SU(2)$. Now, the proof will be finished as soon as we show that an irreducible projective representation $\alpha': \pi_1^V (Y,K) \to SU(2)$ is binary dihedral if and only if its pull back representation $\pi^*\alpha': \pi_1 \Sigma \to SU(2)$ is abelian.

If $\pi^*\alpha'$ is abelian, its image belongs to $U(1) \subset SU(2)$ and the image of $\alpha'$ to its $\Z/n$ extension. Since the only finite extension of $U(1)$ inside $SU(2)$ is the binary dihedral group $U(1)\,\cup\,j\cdot U(1)$, we conclude that $\alpha'$ must be binary dihedral. In particular, if $n$ is odd, the representation $\pi^*\alpha'$ cannot be abelian, which matches the fact that $\R^{\tau}_{\ab} (\Sigma)$ is empty by Proposition \ref{P:ab}. Conversely, it follows from the orbifold exact sequence that $\pi_1 \Sigma$ is the commutator subgroup of $\pi_1^V (Y,K)$. Therefore, if $\alpha'$ is binary dihedral, the image of $\pi^*\alpha'$ must belong to the commutator subgroup of $U(1)\,\cup\,j\cdot U(1)$, which is of course the group $U(1)$.

Since the orbifold spectral flow matches the equivariant spectral flow used to orient $\R^{\tau}(\Sigma)$, the above correspondence is orientation preserving.
\end{proof}

%%%%%%%%%%%%%%%%%%%%%%%%%%%%%%%%%%%%%%%%%%%%%%%

\subsection{Perturbations}\label{S:pert}
In this section, we will remove the assumption that $\R^{\tau}(\Sigma)$ is non-degene\-rate which we used until now. To accomplish that, we will switch from the language of representations to the language of connections. Let $P$ a trivialized $SU(2)$ bundle over $\Sigma$. Any endomorphism $\tilde\tau: P \to P$ which lifts the involution $\tau$ induces an action on the space of connections $\A(\Sigma)$ by pull back. Since any two such lifts are related by a gauge transformation, this action gives a well defined action on the configuration space $\B(\Sigma) = \A(\Sigma)/\G(\Sigma)$. The fixed point set of this action will be denoted by $\B^{\tau}(\Sigma)$. 

The irreducible part of $\B^{\tau}(\Sigma)$ was studied in \cite{RS2} hence we will only deal with reducible connections. In fact, we will further restrict ourselves to constant lifts $u$ because any flat abelian connection $\alpha$ admits such a lift; see Lemma \ref{L:lift}. 

Let $\A^u (\Sigma) \subset \A(\Sigma)$ consist of all non-trivial connections $A$ such that $u^*A = A$, and $\G^u(\Sigma) \subset \G(\Sigma)$ of all gauge transformations $g$ such that $gu = ug$. The quotient space $\A^u(\Sigma)/\G^u(\Sigma)$ will be denoted by $\B^u(\Sigma)$. The following lemma is a key to making the arguments of \cite{RS2} work in the case of abelian connections. 

\begin{lem}
The group $\G^u(\Sigma)$ acts on $\A^u(\Sigma)$ with the stabilizer $\{\pm 1\}$. Moreover, the natural map $\B^u(\Sigma) \to \B^{\tau}(\Sigma)$ is a two-to-one correspondence to its image on the irreducible part of $\B^u(\Sigma)$, and a one-to-one correspondence on the reducible part.
\end{lem}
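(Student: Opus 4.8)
The plan is to first pin down the stabilizer of the $\G^u(\Sigma)$-action, and then to read the fibre count of $\B^u(\Sigma)\to\B^\tau(\Sigma)$ off the way the fixed lift $u$ twists these stabilizers. For an irreducible $A\in\A^u(\Sigma)$ the stabilizer in the full gauge group $\G(\Sigma)$ is already $\{\pm1\}$, so only reducible connections require attention. A non-trivial reducible $A$ reduces along a nowhere-vanishing covariant-constant section $\Phi_A$ of $\ad P$, unique up to sign and scale, and its $\G(\Sigma)$-stabilizer is the circle $\Gamma_A=\{\exp(t\Phi_A):t\in\mathbb R\}$. Since $u^*A=A$, conjugation by $u$ preserves $\Gamma_A$, hence acts on $\Gamma_A\cong U(1)$ as $\pm\id$; it sends $\exp(t\Phi_A)$ to $\exp(t\,u^*\Phi_A)$, so the sign is $+1$ or $-1$ according as $u^*\Phi_A=\Phi_A$ or $u^*\Phi_A=-\Phi_A$. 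I claim the sign is always $-1$. Evaluate at a point $x_0\in\Fix(\tau)=\widetilde K$ and use $u^2=-1$ from Lemma \ref{L:lift}, so $u$ is trace-free with one-dimensional centralizer $\mathbb R\,u\subset\su(2)$; if $u^*\Phi_A=\Phi_A$ then $\Phi_A(x_0)$ commutes with $u$, hence lies on $\mathbb R\,u$, so $(A,\Phi_A)$ descends to a reducible orbifold connection on $Y^o$ and (as $A$ is flat) the holonomy $\alpha$ of $A$ takes values in the maximal torus centralizing $\Phi_A(x_0)$, which also contains $u$; then $\tau^*\alpha=u\alpha u^{-1}=\alpha$, forcing $\alpha$ trivial by Proposition \ref{P:ab} and contradicting $A\in\A^u(\Sigma)$. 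Hence $u$ acts on $\Gamma_A$ by inversion and the $\G^u(\Sigma)$-stabilizer of $A$ is $\{\exp(t\Phi_A):\exp(2t\Phi_A)=1\}=\{\pm1\}$.

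For the correspondence: any $A\in\A^u(\Sigma)$ is fixed by the lift $u$ of $\tau$, so its $\G(\Sigma)$-class lies in $\B^\tau(\Sigma)$, and $\G^u(\Sigma)$-equivalent connections are $\G(\Sigma)$-equivalent, so the map is well-defined. It is onto: given $[B]\in\B^\tau(\Sigma)$, some lift of $\tau$ fixes a representative $A_0$, and by the argument of Lemma \ref{L:lift} (cf.\ \cite[Section 2.2]{RS2}) that lift is $\G(\Sigma)$-conjugate to a constant lift with square $-1$; any two such constant lifts differ by a constant gauge transformation, so it is conjugate to $u$ itself, and transporting $A_0$ by the conjugating gauge transformation yields a representative in $\A^u(\Sigma)$.

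For the fibre count, fix $[B]\in\B^\tau(\Sigma)$ with representative $A_0\in\A^u(\Sigma)$, and for $h\in\G(\Sigma)$ set $\tilde h(x)=u^{-1}h(\tau x)u$. A short calculation gives $u^*(h\cdot A_0)=\tilde h\cdot A_0$, so the members of the $\G(\Sigma)$-orbit of $A_0$ lying in $\A^u(\Sigma)$ are exactly $h\cdot A_0$ for $h\in H:=\{h:h^{-1}\tilde h\in\Gamma_{A_0}\}$, and the fibre over $[B]$ is $H$ modulo $h_1\sim h_2\iff\G^u(\Sigma)\cap h_2\Gamma_{A_0}h_1^{-1}\ne\varnothing$. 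Using $\G^u(\Sigma)\cap\Gamma_{A_0}=\{\pm1\}$ from the stabilizer computation, this quotient is controlled by whether $H=\G^u(\Sigma)\cdot\Gamma_{A_0}$: an $h\in H$ can be moved into $\G^u(\Sigma)$ by an element of $\Gamma_{A_0}$ exactly when its defect $h^{-1}\tilde h$ lies in the image of $\gamma\mapsto\gamma\tilde\gamma^{-1}$, and since $u^*\Phi_{A_0}=-\Phi_{A_0}$ this image equals $\{\exp(2t\Phi_{A_0}):t\in\mathbb R\}$. When $A_0$ is reducible this is all of $\Gamma_{A_0}\cong U(1)$, so $H=\G^u(\Sigma)\cdot\Gamma_{A_0}$ and the fibre is one point; when $A_0$ is irreducible, $\Gamma_{A_0}=\{\pm1\}$ and the image is $\{1\}$, so $H$ is the union of the two $\G^u(\Sigma)$-cosets of defect $+1$ and $-1$ (the latter nonempty, realized e.g.\ by the constant gauge transformation $h\equiv i$ when $u=j$), and the fibre has two points, both irreducible. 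This is the asserted two-to-one behaviour over the irreducible part and one-to-one behaviour over the reducible part. The only non-formal ingredient is the sign $u^*\Phi_A=-\Phi_A$: it replaces the identification-measuring map by the squaring map of $U(1)$, and so separates the two loci, the abelian case going through precisely because $U(1)$ is $2$-divisible while $\{\pm1\}$ is not.
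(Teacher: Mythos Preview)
Your approach is more conceptual than the paper's. The paper simply gauges every reducible $A$ so that it is $U(1)$-valued and takes $u\in j\cdot U(1)$; then a stabilizing $g$ lies in $U(1)$, and $ug=gu$ with $u\in j\cdot U(1)$ forces $g=\pm1$, while for the fibre count the paper writes $ug=c^2gu$ with $c\in U(1)$ in the abelian case and observes that $cg\in\G^u(\Sigma)$ already represents the same orbit. You instead work gauge-invariantly via the covariant-constant section $\Phi_A$ and the sign of $u^*$ on it, then run a defect computation. Both routes are valid, and your framework makes transparent why the dichotomy comes down to $2$-divisibility of $U(1)$ versus $\{\pm1\}$.

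There is, however, a genuine gap. Your derivation of the key sign $u^*\Phi_A=-\Phi_A$ passes through the holonomy \emph{representation} $\alpha:\pi_1\Sigma\to SU(2)$ and Proposition~\ref{P:ab}, and you explicitly insert the hypothesis ``(as $A$ is flat)''. But the lemma is stated for all non-trivial $A\in\A^u(\Sigma)$, and $\A^u(\Sigma)$ is defined to contain \emph{all} connections precisely because the lemma will be applied to perturbed (hence non-flat) connections. For non-flat reducible $A$ there is no homomorphism $\alpha$ on $\pi_1\Sigma$, so the appeal to Proposition~\ref{P:ab} is unavailable. The repair is immediate and is in effect what the paper does: the sign statement is gauge-invariant, so gauge $A$ to be $U(1)$-valued; then $\Phi_A$ is the constant section $i$, and since $u^2=-1$ one may take $u\in j\cdot U(1)$, whence $\Ad(u)(i)=-i$ directly, with no flatness needed. (A smaller point: your surjectivity paragraph is not required, since the lemma only claims a fibre count ``to its image'', and your assertion there that every constant lift can be arranged with $u^2=-1$ is only established in Lemma~\ref{L:lift} for abelian flat connections.)
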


\begin{proof}
For the sake of simplicity, we will assume that reducible connections have their holonomy in the subgroup $U(1)$ of unit complex numbers in $SU(2)$, and that $u \in j\cdot U(1)$. Let us suppose that $g^*A = A$ for a connection $A \in \A^u (\Sigma)$ and a gauge transformation $g \in \G^u (\Sigma)$. If $A$ is irreducible, we automatically have $g = \pm 1$. If $A$ is non-trivial abelian, then $g$ is a complex number, and the condition $ug = gu$ implies that $g = \pm 1$.

To prove the second statement, consider a connection $A$ such that $u^*A = A$ and consider its gauge equivalence class in $\B^{\tau}(\Sigma)$. It consists of all connections $g^*A$ such that $u^* g^* A = g^* A$. Since $A = u^* A$, we immediately conclude that $u^* g^* A = g^* u^* A$ so that $ug$ and $gu$ differ by an element in the stabilizer of $A$. If $A$ is irreducible, its stabilizer consists of $\pm 1$ hence $ug = \pm g u$. The group of gauge transformations satisfying this condition contains $\G^u(\Sigma)$ as a subgroup of index two, which leads to the desired two-to-one correspondence. If $A$ is non-trivial abelian, its stabilizer consists of unit complex numbers. Therefore, we can write $ug = c^2 gu$ with $c \in U(1)$ or, equivalently, $ucg = cgu$. This provides us with a gauge transformation $cg \in \G^u (\Sigma)$ such that $(cg)^* A = g^*A$, yielding the one-to-one correspondence on the reducible part.
\end{proof}

With this lemma in place, the proof of Proposition \ref{P:one} can be re-stated in gauge-theoretic terms as in \cite[Proposition 3.1]{RS2}. The treatment of perturbations in our case is then essentially identical to that in \cite{CS} and \cite{RS2}, one important observation being that the orbifold representations $\alpha'$ that pull back to abelian representations of $\pi_1 (\Sigma)$ are in fact irreducible. This fact is used in the proof of \cite[Lemma 3.8]{CS}, which supplies us with sufficiently many admissible perturbations. 

%%%%%%%%%%%%%%%%%%%%%%%%%%%%%%%%%%%%%%%%%%%%%%%

\subsection{Proof of Theorem \ref{T:main}}
The outcome of Section \ref{S:eq} and Section \ref{S:orb} is that, perhaps after perturbing as in Section \ref{S:pert}, we have two orientation preserving correspondences,
\[
\M^*(X)\; \longrightarrow\; \R^{\tau}(\Sigma)\; \longleftarrow\; \R^V (Y,K),
\]
both of which are one-to-one over $\R_{\ab} (\Sigma)$ and two-to-one over $\R^{\tau}_{\irr} (\Sigma)$ (we omit perturbations in our notations). These correspondences imply the existence of an orientation preserving one-to-one correspondence between $\M^*(X)$ and $\R^V (Y,K)$. The proof of Theorem \ref{T:main} will be complete after we express the signed count of points in $\R^V (Y,K)$ in terms of the Casson invariant of $Y$ and the equivariant knot signatures of $K$.

The character variety $\R^V (Y,K)$ of projective representations $\alpha'$ splits into two components corresponding to whether $(\alpha'(\mu))^n$ equals $+1$ or $-1$. Let $N$ be the exterior of the knot $K$ then this splitting corresponds to the splitting
\begin{equation}\label{E:ss}
\R^V (Y,K)\; = \; \bigcup_{k=0}^n\; \S_{k/n} (N,SU(2)),
\end{equation}
where $\S_a (N,SU(2))$ comprises the conjugacy classes of representations $\gamma: \pi_1 N \to SU(2)$ such that $\tr \gamma (\mu) = 2\cos (\pi a)$. According to Herald \cite{H}, see also Collin--Saveliev \cite{CS}, the combined signed count of points in \eqref{E:ss} equals 
\[
4n\cdot\lambda (Y)\; +\; \frac 1 2\cdot \sum_{m=1}^{n-1} \; \sign_{m/n} (K).
\]
Dividing this formula by four, we obtain the formula for the Furuta--Ohta invariant $\lfo (X)$ claimed in Theorem \ref{T:main}.

%%%%%%%%%%%%%%%%%%%%%%%%%%%%%%%%%%%%%%%%%%%%%%%%%%

\section{Applications}\label{S:applications}
In this section we supply proofs for the applications of our main theorem discussed in Section~\ref{introapps} of the introduction; for the convenience of the reader we will restate each result before giving the proof.

%We will show that the property of not having an L-space branched cover can sometimes hold for an entire concordance class of knots.

\begin{thm}[Theorem~\ref{T:Donc} from the introduction]\label{conc}
Let $n=p^m$ for $p$ a prime number. There is a knot $K_n$ such that, for any knot $K$ that is smoothly concordant to $K_n$, its $n$-fold cyclic branched cover $\Sigma_n (K)$ is not an $L$-space.
\end{thm}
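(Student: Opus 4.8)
The plan is to use the smooth concordance invariance of $L_n(K) = L_{p^m}(K)$ established in Corollary~\ref{C:concordance invariant}, and then to exhibit a single knot $K_n$ for which the formula of Theorem~\ref{T:A} forces $L_n(K_n) \neq 0$, so that $\hmred(\Sigma_n(K_n)) \neq 0$ for every knot $K$ concordant to $K_n$. Since being an $L$-space requires $\hmred(\Sigma;\Z) = 0$ (and $H_1(\Sigma;\Q) = 0$, which holds here because $n$ is a prime power), non-vanishing of $L_n(K)$ is an obstruction to $\Sigma_n(K)$ being an $L$-space, and this obstruction is concordance-invariant.

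First I would rewrite the right-hand side of \eqref{E:lef} for $Y = S^3$, where $\lambda(S^3) = 0$, so that
\[
L_n(K) = \frac{1}{8}\sum_{m=1}^{n-1}\sign_{m/n}(K) - h(\Sigma_n(K),\s).
\]
Thus it suffices to find $K_n$ whose Tristram--Levine signature sum is large and negative (or large in absolute value) compared to eight times the Fr\o yshov invariant of its branched cover. The natural candidates are torus knots or iterated torus knots, or connected sums of a fixed knot $K_0$: replacing $K_0$ by the $r$-fold connected sum $\#^r K_0$ multiplies the signature sum by $r$ while the Fr\o yshov invariant of the branched cover $\Sigma_n(\#^r K_0) = \#^r \Sigma_n(K_0)$ is additive as well (by the connected sum behavior of $h$), so one needs instead to use a knot whose branched cover is an $L$-space or has controlled $h$ while the signatures are nonzero. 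Concretely, I would take $K_0$ a knot with $\sum_{m=1}^{n-1}\sign_{m/n}(K_0) \neq 0$ and such that $\Sigma_n(K_0)$ bounds a negative-definite (or positive-definite) 4-manifold pinning down $h(\Sigma_n(K_0),\s)$; torus knots $T(p,q)$ are the cleanest, since their branched covers are Brieskorn spheres (or Seifert fibered spaces) with well-understood Fr\o yshov invariants, and their Tristram--Levine signatures are explicitly computable and generically nonzero.

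The main obstacle will be controlling the Fr\o yshov invariant term $h(\Sigma_n(K_n),\s)$ precisely enough — or rather, estimating it well enough — to guarantee $L_n(K_n) \neq 0$ for the specific $K_n$ chosen. The signature sum is elementary to compute, but $h$ is genuinely subtle; the cleanest route is to pick $K_n$ so that $\Sigma_n(K_n)$ is a Seifert fibered rational homology sphere bounding a sharp negative-definite plumbing, where $h$ is computable from the lattice (this is the role of the references to \cite{can-karakurt:lattice,rustamov:plumbed} acknowledged in the paper), and then verify the numerical inequality. Once a single such $K_n$ is pinned down, the rest is immediate: $L_n(K_n) \neq 0 \Rightarrow \hmred(\Sigma_n(K)) \neq 0$ for all $K$ concordant to $K_n$ by Corollary~\ref{C:concordance invariant}, and hence none of these branched covers is an $L$-space. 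I would also remark that this recovers \cite[Theorem G]{LRS2} when $n=2$ and gives the analogous statement for all prime-power $n$.
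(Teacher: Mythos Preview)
Your overall strategy matches the paper's: establish concordance invariance of $L_n$ and then exhibit a single knot $K_n$ with $L_n(K_n)\neq 0$. You also correctly identify torus knots as the natural candidates. But you are missing the one observation that makes the paper's argument go through without any numerical computation of signatures or of $h$.

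The paper takes $K_n = T(q,r)$ with $q,r\ge 2$ coprime to each other and to $p$ (excluding the triple $(2,3,5)$). The branched cover $\Sigma_n(K_n)$ is then the Brieskorn \emph{integral} homology sphere $\Sigma(n,q,r)$, and the crucial point is that the covering translation $\tau$ on $\Sigma(n,q,r)$ sits inside the standard Seifert $S^1$-action, hence is isotopic to the identity. Consequently $\tau_*$ acts as the identity on $\hmred$, and
\[
L_n(K_n)\;=\;\Lef(\tau_*)\;=\;\chi\bigl(\hmred(\Sigma(n,q,r))\bigr).
\]
Now one only needs to quote that $\Sigma(n,q,r)$ is never an $L$-space (away from the Poincar\'e sphere), which follows from the Heegaard Floer computations in \cite{can-karakurt:lattice,Eftekhary,rustamov:plumbed} together with the $HF=HM$ isomorphism. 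No separate evaluation of either term on the right-hand side of \eqref{E:lef} is needed.

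Your proposed route---computing $\sum_{m}\sign_{m/n}(K_n)$ and $h(\Sigma_n(K_n),\s)$ separately and showing they do not cancel---could in principle succeed, but it is considerably harder, and you yourself flag the $h$-computation as the main obstacle. The paper's trick bypasses that entirely by evaluating the \emph{left}-hand side of \eqref{E:lef} directly. The references \cite{can-karakurt:lattice,rustamov:plumbed} are being used for the qualitative statement ``not an $L$-space,'' not for a quantitative value of $h$.
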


\begin{proof}
Since $n$ is a prime power, it is standard that all of the Tristram--Levine signatures are knot concordance invariants.  Similarly, the $h$-invariant of the $n$-fold cyclic branched cover, with the specified $\spinc$ structure, is a knot concordance invariant~\cite{jabuka:delta}. It now follows from Theorem~\ref{T:A} that $\Lef(\tau_*)$ is a knot concordance invariant. Therefore, if $K_n$ is a knot for which $\Lef(\tau_*) \neq 0$, then the same is true for any knot $K$ in the concordance class of $K_n$. In particular, the $n$-fold cyclic branched cover of $K$ is not an $L$-space. 

All that remains to prove the theorem is to find a knot $K_n$ with $\Lef(\tau_*) \neq 0$. Pick relatively prime integers $q$ and $r$ both of which are greater than or equal to $2$ and are relatively prime with $p$; we will exclude the triple $(2,3,5)$ to avoid dealing with the exceptional case of the Poincar\'e homology sphere. The $n$--fold cyclic branched cover of the right-handed torus knot $T(q,r)$ is the Brieskorn homology sphere
\[
\Sigma(n,q,r)\;=\; \{\, x^n + y^q + z^r = 0\,\}\,\cap\, S^5
\]
with its canonical link orientation and with the covering translation $\tau (x,y,z) = (e^{2\pi i/n} x, y, z)$. The homology sphere $\Sigma (n,q,r)$ admits a fixed point free circle action $t (x,y,z) = (t^{qr}x, t^{nr} y, t^{nq}z)$ making it into a Seifert fibered manifold; see Neumann--Raymond \cite{neumann-raymond}. The covering translation $\tau$ is actually contained in this circle action: it corresponds to the choice of $t = e^{2\pi ip/n}$ for any integer $p$ such that $pqr \equiv 1 \pmod n$. This implies that $\tau$ is isotopic to the identity and that $\Lef(\tau_*) = \chi(\hmred(\Sigma(n,q,r)))$. 

%\smargin{Jianfeng: This argument needs some more details: $HM^{red}$ is not just the homology of irreducible critical points. One needs to argue that the irreducible points appear in pairs so can not be all canceled with reducible points (and we have to address the issure of transversality). I would say that it's easier to just cite the corresponding result in Heegard Floer homology. For example: Seifert fibered homology spheres with trivial Heegaard Floer homology by Eaman Eftekhary. \color{teal} I agree; see if you like the revised version}

To show that $\chi(\hmred(\Sigma(n,q,r)))\neq 0$, we will use the identification of the monopole and Heegaard Floer homology due to Kutluhan, Lee, and Taubes \cite{kutluhan-lee-taubes:HFSW-I,kutluhan-lee-taubes:HFSW-II,kutluhan-lee-taubes:HFSW-III,kutluhan-lee-taubes:HFSW-IV,kutluhan-lee-taubes:HFSW-V}, or alternatively, Colin, Ghiggini, and Honda \cite{colin-ghiggini-honda:HFECH-I,colin-ghiggini-honda:HFECH-II,colin-ghiggini-honda:HFECH-III} and Taubes \cite{taubes:HMECH}. The non-vanishing of the Euler characteristic of $\hmred(\Sigma(n,q,r))$ follows from this identification and the corresponding result in Heegaard Floer homology~\cite{can-karakurt:lattice} as well as \cite{Eftekhary,rustamov:plumbed}.
\end{proof}

For any prime power $n$, we can define a subgroup $\BL_n$ in the smooth concordance group $\CC$ generated by knots that are concordant to a knot whose $n$-fold branched cover is an $L$-space. One would expect that, for a given $n$, the group $\BL_n$ is rather small; our Theorem~\ref{conc} provides some evidence for that.

\begin{cor}\label{C:conc}
Let $n$ be a prime power. Then $\CC/\BL_n$ has a $\Z$ summand. 
\end{cor}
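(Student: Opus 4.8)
The plan is to promote the additive concordance invariant $L_n$ of Corollary~\ref{C:concordance invariant} to a split surjection $\CC/\BL_n \to \Z$. Recall that for $n = p^m$ a prime power, Corollary~\ref{C:concordance invariant} tells us that $L_n$ is a smooth concordance invariant which is additive under connected sum, so that $K \mapsto L_n(K)$ defines a group homomorphism $L_n \colon \CC \to \Z$.

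The first step is to observe that $L_n$ kills $\BL_n$. Since $\BL_n$ is generated by the classes of knots $K$ that are concordant to some knot $K'$ whose $n$-fold branched cover $\Sigma_n(K')$ is an $L$-space, and $L_n$ is a homomorphism, it is enough to check that $L_n(K) = 0$ for each such generator. But $L_n(K) = L_n(K')$ by concordance invariance, and $L_n(K') = \Lef\bigl(\tau_* \colon HM^{\red}(\Sigma_n(K')) \to HM^{\red}(\Sigma_n(K'))\bigr) = 0$ because $HM^{\red}(\Sigma_n(K')) = 0$ by the very definition of an $L$-space. Hence $L_n$ descends to a homomorphism $\overline{L_n} \colon \CC/\BL_n \to \Z$.

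The second step uses Theorem~\ref{conc}: the knot $K_n$ produced there (a torus knot whose branched cover is a Brieskorn homology sphere) satisfies $L_n(K_n) \neq 0$. Therefore the image of $\overline{L_n}$ is a nonzero subgroup of $\Z$, hence infinite cyclic; dividing by a generator of this image yields a surjective homomorphism $\CC/\BL_n \to \Z$. Because $\Z$ is free abelian, this surjection admits a section, so $\CC/\BL_n \cong \Z \oplus \ker(\overline{L_n})$, which exhibits the desired $\Z$ summand.

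There is no serious obstacle here beyond the inputs already established: the two facts doing the work are the additivity of $L_n$ under connected sum (Corollary~\ref{C:concordance invariant}, which in turn rests on Theorem~\ref{T:A} together with the additivity of the Casson invariant, of the Tristram--Levine signatures, and of the Fr{\o}yshov invariant) and the nonvanishing $L_n(K_n) \neq 0$ (Theorem~\ref{conc}). The one point that needs a little care is that $\BL_n$ is, by definition, generated by the classes of knots concordant to knots with $L$-space branched covers, so the vanishing $L_n|_{\BL_n} = 0$ must be verified on generators and then extended by the homomorphism property.
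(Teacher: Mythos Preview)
Your proof is correct and follows essentially the same approach as the paper's: both arguments use that $L_n$ (equivalently $\Lef(\tau_*)$) is a homomorphism $\CC \to \Z$, that it vanishes on $\BL_n$ because the reduced monopole homology of an $L$-space is zero, and that the knot $K_n$ from Theorem~\ref{conc} witnesses nontriviality of the image. Your version is in fact a bit more explicit than the paper's about why the map factors through $\CC/\BL_n$ and about the splitting (via freeness of $\Z$).
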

\begin{proof}
It follows from the proof of Theorem \ref{conc} that $\Lef(\tau_*): \CC \to \Z$ is a well-defined non-zero map. Since both $h(\Sigma,\s)$ and the Tristram--Levine signatures are homomorphisms, Theorem~\ref{T:A} implies that $\Lef(\tau_*)$ is a homomorphism as well. Thus there is a surjection from $\CC/\BL_n$ to the image of this homomorphism, which is isomorphic to $\Z$.
\end{proof}

\begin{rmk}
One could alternately deduce that the Lefschetz number is a homomorphism from the splitting formula of~\cite{LRS1} and the additivity of $\lsw$ proved in~\cite{ma:fiber}. %\smargin{I removed the sentence about change sign of Lefschetz number under concordance inverse, because this actually follows from the duality property of monopole Floer homology.}%That the Lefschetz number changes sign when one takes the concordance inverse of the knot makes use of the splitting formula, as in the proof of Corollary~\ref{C:orient} below.
%\item The paper~\cite{moy} shows how to enumerate the solutions to the Seiberg-Witten equations on $\Sigma(n,q,r)$ in terms of a certain count of lattice points in $3$-space. Coupled with known calculations for the Tristram-Levine signatures of torus knots, this gives in principle a combinatorial calculation of the $h$-invariants for the corresponding Brieskorn homology spheres. 
\end{rmk}

%In a further application in this vein, we can show that for certain knots $K$, no even order cyclic branched cover of $K$ is an $L$-space.  This is in agreement with a pattern observed in~\cite{boileau-boyer-gordon:branched-qp} that for some knots, $\Sigma_n(K)$ will not be an $L$-space for all sufficiently large $n$. 

\begin{thm}[Theorem~\ref{T:Jones} from the introduction]\label{Jones}
Let $K\subset S^{3}$ be a knot with $\det(K)=1$ and $J'(-1)\neq 0$, where $J_K (t)$ is the Jones polynomial of $K$.  Then, for any $m\geq1$, the $2m$-fold cyclic branched cover $\Sigma_{2m}(K)$ is not an $L$-space.
\end{thm}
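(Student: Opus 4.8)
The strategy is to assume $\Sigma_{2m}(K)$ is an $L$-space and deduce $J'_K(-1)=0$, contradicting the hypothesis. If $\Sigma_{2m}(K)$ is not a rational homology sphere it is not an $L$-space and there is nothing to prove, so we may assume it is one. Because $\det(K)=1$, the double branched cover $\Sigma_2(K)$ is an integral homology sphere, and $\Sigma_{2m}(K)$ is at once the $2m$-fold cyclic branched cover of $(S^3,K)$ and the $m$-fold cyclic branched cover of $(\Sigma_2(K),\widehat K)$, where $\widehat K\subset\Sigma_2(K)$ is the knot covering $K$ and the covering translation over $\Sigma_2(K)$ is $\tau^2$. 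The $\tau$-invariant and the $\tau^2$-invariant spin structures on $\Sigma_{2m}(K)$ agree (each lifts the spin structure of $S^3$), so Theorem~\ref{T:A} applies in both pictures with the same $\s$.

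The hypothesis $HM^{\red}(\Sigma_{2m}(K))=0$ makes the Lefschetz numbers of $\tau_*$ and of $(\tau^2)_*$ vanish, so Theorem~\ref{T:A} (with $\lambda(S^3)=0$) yields
\begin{equation*}
\frac18\sum_{j=1}^{2m-1}\sign_{j/2m}(K)\;=\;h(\Sigma_{2m}(K),\s)\;=\;m\,\lambda(\Sigma_2(K))+\frac18\sum_{j=1}^{m-1}\sign_{j/m}(\widehat K).
\end{equation*}
The computational input here is the identity $\sign_{j/m}(\widehat K)=\sign_{j/2m}(K)+\sign_{(j+m)/2m}(K)$, which follows from the Alexander-polynomial relation $\Delta_{\widehat K}(t^2)\doteq\Delta_K(t)\Delta_K(-t)$ (the infinite cyclic cover of $\Sigma_2(K)\setminus\widehat K$ is that of $S^3\setminus K$ equipped with the square of its deck transformation) and is also consistent with evaluating Gilmer's $\rho$-invariant from Lemma~\ref{rho} in the two branched-cover descriptions. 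Substituting, the right-hand sum telescopes to $\sum_{j=1}^{2m-1}\sign_{j/2m}(K)-\sign_{1/2}(K)$, and the displayed identity collapses to $\lambda(\Sigma_2(K))=\sigma(K)/(8m)$, where $\sigma(K):=\sign_{1/2}(K)$ denotes the ordinary signature.

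To conclude, I would use that $\Sigma_2(K)$ is itself an $L$-space: for $m=1$ this is the standing assumption, while for $m\ge2$ it should follow from $\Sigma_{2m}(K)$ being an $L$-space, since $\Sigma_2(K)=\Sigma_{2m}(K)/\langle\tau^2\rangle$ is obtained from $\Sigma_{2m}(K)$ as the quotient of a cyclic branched cover. Establishing this implication (it suffices to know $HM^{\red}(\Sigma_2(K);\Q)=0$, equivalently that $\chi(\hmred(\Sigma_2(K)))=0$ and that the involution acts trivially on $\hmred(\Sigma_2(K))$ rationally) is the step I expect to be the main obstacle, to be handled by a $\mathbb Z/p$ transfer or localization argument for cyclic branched covers, reducing to prime degree and inducting down the tower $\Sigma_{2m}(K)\to\Sigma_{2m/p}(K)\to\cdots\to\Sigma_2(K)$. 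Granting it, $HM^{\red}(\Sigma_2(K))=0$ forces $\chi(\hmred(\Sigma_2(K)))=0$, so the first line of \eqref{E:hcasson} gives $h(\Sigma_2(K))=\lambda(\Sigma_2(K))$, while Theorem~\ref{T:A} applied to the double branched cover gives $h(\Sigma_2(K))=\tfrac18\sigma(K)$; hence $\lambda(\Sigma_2(K))=\tfrac18\sigma(K)$, which together with $\lambda(\Sigma_2(K))=\sigma(K)/(8m)$ forces $\sigma(K)=0$ and $\lambda(\Sigma_2(K))=0$ (both automatic when $m=1$, where $\lambda(\Sigma_2(K))=\tfrac18\sigma(K)$ already suffices). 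Finally, Mullins' surgery formula $\lambda(\Sigma_2(K))=\tfrac18\sigma(K)-\tfrac1{12}\,J'_K(-1)/J_K(-1)$ together with $|J_K(-1)|=\det(K)=1$ then forces $J'_K(-1)=0$, the desired contradiction. Hence $\Sigma_{2m}(K)$ is not an $L$-space.
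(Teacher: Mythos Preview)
Your overall architecture---apply Theorem~\ref{T:A} to both $\tau$ and $\tau^2$, compare, and feed into Mullins---is exactly the paper's. The gap is in the signature identity. The relation $\Delta_{\widehat K}(t^2)\doteq\Delta_K(t)\Delta_K(-t)$ only pins down the \emph{jumps} of the Tristram--Levine signature function, so it determines $\sign_{j/m}(\widehat K)$ in terms of $\sign_{j/2m}(K)+\sign_{(j+m)/2m}(K)$ only up to an additive constant. That constant is not zero: evaluating at $j=0$ (or $j=m$) shows it equals $\sigma(K)=\sign_{1/2}(K)$. The correct identity is
\[
\sign_{j/m}(\widehat K)\;=\;\sign_{j/2m}(K)+\sign_{(j+m)/2m}(K)-\sigma(K),
\]
and summing this over $j=1,\dots,m-1$ gives $\sum\sign_{j/m}(\widehat K)=\sum_{j=1}^{2m-1}\sign_{j/2m}(K)-m\,\sigma(K)$, not $-\sigma(K)$. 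Plugging back in, the two applications of Theorem~\ref{T:A} now yield $m\,\lambda(\Sigma_2(K))=\tfrac{m}{8}\sigma(K)$, i.e.\ $\lambda(\Sigma_2(K))=\tfrac18\sigma(K)$, and Mullins' formula immediately forces $J'_K(-1)=0$.

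Two consequences. First, your intermediate conclusion $\lambda(\Sigma_2(K))=\sigma(K)/(8m)$ is wrong, and the entire detour through ``$\Sigma_2(K)$ is an $L$-space''---which you correctly flagged as the hard step and which is neither proved nor obviously true---is unnecessary. Second, the paper establishes the needed signature relation not term-by-term but in summed form, via a clean 4-manifold argument: push a Seifert surface of $K$ into $D^4$, take the $2$-fold and $2m$-fold branched covers $W_2$ and $W_{2m}$, and read off $\sign(W_{2m})=\sum_{j=1}^{2m-1}\sign_{j/2m}(K)$, $\sign(W_{2m})-m\,\sign(W_2)=\sum_{j=1}^{m-1}\sign_{j/m}(\widehat K)$, and $\sign(W_2)=\sigma(K)$. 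This avoids the constant-of-integration pitfall entirely.
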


%We remark that since $det(K)=1$, the condition that $\Sigma_{2m}(K)$ be a rational homology sphere is automatically satisfied when $m$ is a prime power.

\begin{proof}
We may assume without loss of generality that $\Sigma_{2m}(K)$ is a rational homology sphere. Since $\det(K) =1$,  $\Sigma_{2}(K)$ is a homology sphere, so this is automatic when $m$ is a prime power. Suppose $\Sigma_{2m}(K)$ is an $L$-space and apply the formula of Theorem \ref{T:A} to the covering translation $\tau$ and its square $\tau^{2}$. We obtain the formulas
\begin{align*}
h(\Sigma_{2m}(K),\s) &=\frac{1}{8}\;\sum_{j=1}^{2m-1}\sign_{j/2m}(K,S^{3})\quad\text{and} \\
h(\Sigma_{2m}(K),\s) &=m\cdot \lambda(\Sigma_{2}(K))+\frac{1}{8}\;\sum_{j=1}^{m-1}\,\sign_{j/m}(K,\Sigma_{2}(K)).
\end{align*}
Comparing them, we obtain
$$
m\cdot \lambda(\Sigma_{2}(K))\,=\,\frac{1}{8}\;\sum_{j=1}^{2m-1}\sign_{j/2m}(K,S^{3})\,-\,\frac{1}{8}\;\sum_{j=1}^{m-1}\sign_{j/m}(K,\Sigma_{2}(K)).
$$
On the other hand, we have the equality
$$
\frac 1 8\,\sum_{j=1}^{2m-1}\sign_{j/2m}(K,S^{3})\,-\,\frac 1 8\,\sum_{j=1}^{m-1}\sign_{j/m}(K,\Sigma_{2}(K))= \frac m 8\cdot \sign_{1/2}(K,S^{3}),
$$
which can be proved as follows. By pushing a Seifert surface of $K$ into the interior of the 4-ball and taking the $2$-fold and $2m$-fold branched covers, we obtain 4-manifolds $W_{2}$ and $W_{2m}$ with
\begin{gather*}
\sum_{j=1}^{2m-1}\sign_{j/2m}(K,S^{3})= \sign(W_{2m}), \\
\sum_{j=1}^{m-1}\sign_{j/m}(K,\Sigma_{2}(K))=\sign(W_{2m})-m\cdot \sign(W_{2}),\quad\text{and} 
\end{gather*}
$$
 \sign_{1/2}(K,S^{3})=\sign(W_{2}).
$$
This gives the desired formula. We therefore conclude that
\[
\lambda(\Sigma_{2}(K))=\frac{1}{8}\,\sign_{1/2}(K,S^{3}).
\]
Comparing this with Mullins's theorem~\cite{mullins:casson}
$$\lambda(\Sigma_{2}(K))=-\frac{1}{12}J'(-1)+\frac{1}{8}\sign_{1/2}(K,S^{3}),$$
we obtain $J'(-1)=0$, a contradiction.
\end{proof}

\begin{thm}[Theorem \ref{C:orient} from the introduction]
Let $X$ be the mapping torus of an orientation preserving diffeomorphism (not necessary of finite order) of a rational homology sphere. Then, for any choice of spin structure on $X$, we have
\[
-\lsw(X) = \lsw(-X).
\]
\end{thm}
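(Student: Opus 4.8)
The plan is to deduce the identity from the splitting formula of \cite{LRS1}. Since $X$ is the mapping torus of $\tau\colon\Sigma\to\Sigma$, it fibers over $S^{1}$, and cutting along a fiber realizes $X$ as the product cobordism $[0,1]\times\Sigma$ with its two ends glued together by $\tau$. A choice of spin structure on $X$ restricts to a $\tau$-invariant spin structure $\s$ on $\Sigma$, and the splitting formula of \cite{LRS1} identifies $-\lsw(X)$ with the normalized monopole Lefschetz number of this closed-up cobordism:
\[
-\lsw(X)\ =\ \Lef\bigl(\tau_{*}\colon\hmred(\Sigma,\s)\to\hmred(\Sigma,\s)\bigr)\ +\ h(\Sigma,\s).
\]
The hypotheses needed here — and those needed for $\lsw(X)$ to be defined at all — reduce to the conditions \eqref{E:zz}, equivalently to $\tau_{*}-1$ being invertible on $H_{1}(\Sigma;\mathbb Z)$.

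Next I would note that $-X$ is again a mapping torus of the same type. Reversing the orientation of $X$ while keeping its homology orientation is achieved by reversing the orientation of the fiber: since $\dim\Sigma$ is odd this flips the orientation of the $4$-manifold, but it leaves $H^{1}(X;\mathbb R)$ — hence the homology orientation — unchanged. Thus $-X$ is the mapping torus of $\tau\colon-\Sigma\to-\Sigma$ with its standard orientation and homology orientation, and the splitting formula applies to it verbatim, giving
\[
-\lsw(-X)\ =\ \Lef\bigl(\tau_{*}\colon\hmred(-\Sigma,\s)\to\hmred(-\Sigma,\s)\bigr)\ +\ h(-\Sigma,\s).
\]
It then suffices to identify the right-hand side with $-\bigl(-\lsw(X)\bigr)$, which rearranges to $-\lsw(X)=\lsw(-X)$.

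For this I would invoke the behavior of reduced monopole Floer homology and of the Fr\o yshov invariant under orientation reversal, as in \cite{kronheimer-mrowka:monopole}. The Fr\o yshov invariant changes sign, $h(-\Sigma,\s)=-h(\Sigma,\s)$. For the Lefschetz term, one uses the natural, grading-reversing duality relating $\hmred(\Sigma,\s)$ and $\hmred(-\Sigma,\s)$: its naturality matches the automorphism of $\hmred(-\Sigma,\s)$ induced by $\tau$ with the one $\tau$ induces on the (graded) dual of $\hmred(\Sigma,\s)$, up to a further automorphism coming from $\tau$ itself, while the grading reversal flips the $\mathbb Z/2$-grading in which the Lefschetz number is taken. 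This last sign is, for instance, already forced by the Casson-type normalizations \eqref{E:hcasson}, which give $\chi(\hmred(-\Sigma,\s))=-\chi(\hmred(\Sigma,\s))$. Together these show that $\Lef\bigl(\tau_{*}\colon\hmred(-\Sigma,\s)\to\hmred(-\Sigma,\s)\bigr)$ is the negative of $\Lef\bigl(\tau_{*}\colon\hmred(\Sigma,\s)\to\hmred(\Sigma,\s)\bigr)$, which together with the sign change of $h$ completes the proof.

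The step requiring the most care is the last one: pinning down exactly how reduced monopole Floer homology behaves under orientation reversal. One must fix the grading and sign conventions of \cite{kronheimer-mrowka:monopole} and \cite{LRS1}, check that the $\tau$-action on $\hmred(-\Sigma,\s)$ is correctly matched with $\tau_{*}$ on $\hmred(\Sigma,\s)$ under the duality pairing, and confirm that the net grading shift is odd so that the Lefschetz number genuinely acquires a minus sign rather than a plus sign. The remaining points — verifying the hypotheses of the splitting formula and matching up the spin structures on the two copies of $\Sigma$ — are routine.
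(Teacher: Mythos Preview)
Your proposal is correct and follows essentially the same route as the paper: apply the splitting formula of \cite{LRS1} to both $X$ and $-X$, use $h(-\Sigma,-\s)=-h(\Sigma,\s)$, and reduce to showing that the Lefschetz number changes sign under orientation reversal via the Kronheimer--Mrowka duality. The paper carries out precisely the ``step requiring the most care'' that you flag: it proves a duality lemma $\widecheck{HM}_a(-\Sigma,-\s)\cong\widehat{HM}_{1+a}(\Sigma,\s)^*$ in the mod~2 grading (using \cite[Proposition~28.3.4]{kronheimer-mrowka:monopole} together with the grading computation \cite[Lemma~2.6]{LRS1} to confirm the shift is odd), and then checks that this descends to $HM^{\red}$ and intertwines the two $\tau_*$ actions --- your appeal to \eqref{E:hcasson} as a consistency check is only heuristic here, since those formulas are stated for integral homology spheres.
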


\begin{proof} 
Let $X$ be the mapping torus of a finite order diffeomorphism $\tau: \Sigma \rightarrow \Sigma$ of a rational homology sphere $\Sigma$. Given a spin structure on $X$ denote by $\s$ its restriction to $\Sigma$ and observe that $\tau^{*}(\s) = \s$. Let $-\Sigma$ be the manifold $\Sigma$ with reversed orientation and denote by $-\s$ and $-\tau$ the corresponding spin structure and diffeomorphism, respectively. By the splitting theorem of $\lsw$ \cite[Theorem A]{LRS1}, we have 
\begin{gather*}
\lsw(X)=-\Lef(\tau_{*}:HM^{\red} (\Sigma,\s) \to HM^{\red} (\Sigma,\s))-h(\Sigma,\s)\quad\text{and} \\
\lsw(-X)=-\Lef((-\tau)_{*}:HM^{\red} (-\Sigma,-\s) \to HM^{\red} (-\Sigma,-\s))-h(-\Sigma,-\s).
\end{gather*}
Since $h$ is a homology cobordism invariant, it vanishes on the manifold $\Sigma\,\#\,(-\Sigma)$. It now follows from the additivity of $h$ (see \cite[Theorem 3]{froyshov:monopole}) that $h(-\Sigma,-\s)=-h(\Sigma,\s)$. Therefore, all we need is to check is that
\[
\Lef((-\tau)_{*})=-\Lef(\tau_{*}).
\]

\begin{lem}\label{L:duality}
There is a duality isomorphism $\widecheck{HM}_{a}(-\Sigma,-\s)\,\cong\, \widehat{HM}_{1+a}(\Sigma,\s)^{*}$ with respect to the canonical mod 2 grading in monopole homology.
\end{lem}

\begin{proof}
The monopole homology has two canonical gradings, the rational grading $\gr^{\Q}$ and the mod 2 grading $\gr^{(2)}$. Kronheimer and Mrowka \cite[Proposition 28.3.4]{kronheimer-mrowka:monopole} construct a duality isomorphism $\widecheck{HM} (-\Sigma,-\s)\,\to\, \widehat{HM} (\Sigma,\s)^{*}$ which maps elements of rational grading $j$ to elements of rational grading $-1 - j$. Since the relative rational grading matches modulo 2 the relative mod 2 grading, there is a universal constant $c(\Sigma,\s) \in \Q/2\Z$ such that $\gr^{\Q} = \gr^{(2)} + c(\Sigma,\s)\pmod 2$. Therefore, the above duality isomorphism maps elements of mod 2 grading $a$ to elements of mod 2 grading $1 + a + c(\Sigma,\s) + c(-\Sigma,-\s)\pmod 2$. The calculation of \cite[Lemma 2.6]{LRS1} implies that $c(\Sigma,\s) + c(-\Sigma,-\s) = 0 \pmod 2$, thereby completing the argument.
\end{proof}

Recall from \cite[(3.4)]{kronheimer-mrowka:monopole} that $HM_{a}^{\red} (\Sigma,\s) = \im j_{\,\Sigma,a}$ for the connecting homomorphism $j_{\,\Sigma,a}: \widecheck{HM}_{a}(\Sigma,\s)\rightarrow \widehat{HM}_{a}(\Sigma,\s)$. Moreover, it follows from the definition of $j_{\,\Sigma,a}$ that, under the duality isomorphisms
$$
\widecheck{HM}_{a}(-\Sigma,-\s)\,\cong\, \widehat{HM}_{1+a}(\Sigma,\s)^{*}\quad \text{and}\quad \widehat{HM}_{a}(-\Sigma,-\s)\,\cong\, \widecheck{HM}_{1+a}(\Sigma,\s)^{*}
$$
of Lemma \ref{L:duality}, the map $j_{-\Sigma,a}$ is the dual of the map $j_{\,\Sigma,1+a}$. As a consequence, we obtain an isomorphism 
$$
HM_{a}^{\red} (-\Sigma,-\s)\,\cong\, HM_{1+a}^{\red} (\Sigma,\s)^{*}.
$$
With respect to this isomorphism, $(-\tau)_{*}$ is the dual of $\tau_{*}$, which implies that $\Lef((-\tau)_{*})=-\Lef(\tau_{*})$.
\end{proof}

\end{document}